\newcommand{\QQ}{\mathbb Q}
\renewcommand{\AA}{\mathbb A}
\newcommand{\ZZ}{\mathbb Z}
\newcommand{\NN}{\mathbb N}
\newcommand{\FF}{\mathbb F}
\newcommand{\BB}{\mathbb B}
\newcommand{\CC}{\mathbb C}
\newcommand{\EE}{\mathbb E}
\newcommand{\calC}{{\cal C}}
\newcommand{\Qp}{\QQ_p}
\newcommand{\Zp}{\ZZ_p}
\newcommand{\mup}{\mu_{p^\infty}}
\newcommand{\tsigma}{\tilde{\sigma}}
\newcommand{\EfK}{\EE_K}
\newcommand{\frakT}{{\frak T}}
\newcommand{\calx}{{\frak x}}
\newcommand{\caly}{{\frak y}}
\newcommand{\calz}{{\frak z}}
\newcommand{\tM}{\tilde{M}}
\newcommand{\mun}{\mu_{p^n}}
\newcommand{\vs}{\vspace{1ex}}
\newcommand{\xin}{\xi_{p^n}}
\newcommand{\fT}{\frak{T}}
\newcommand{\gammaone}{\gamma_{1,n}}
\newcommand{\gammatwo}{\gamma_{2,n}}
\newcommand{\gammatwoN}{\gamma_2^{\frac{1}{N}}}
\newcommand{\bgamma}{b_{\gammatwo}}
\newcommand{\tbgamma}{\tilde{b}_{\gammatwo}}
\newcommand{\agamma}{a_{\gammaone}}
\newcommand{\tagamma}{\tilde{a}_{\gammaone}}
\newcommand{\calA}{{\cal A}}
\newcommand{\fm}{\frak{m}}
\newcommand{\bpi}{\bar{\pi}}
\newcommand{\calF}{{\cal F}}
\newcommand{\calG}{{\cal G}}
\newcommand{\calR}{\tilde{\EE}}
\newcommand{\frakV}{\frak{V}}
\DeclareMathOperator{\Gal}{Gal}
\DeclareMathOperator{\id}{id}
\DeclareMathOperator{\Tr}{Tr}
\DeclareMathOperator{\Res}{Res}
\DeclareMathOperator{\TR}{TR}
\DeclareMathOperator{\Hom}{Hom}
\DeclareMathOperator{\cris}{cris}
\DeclareMathOperator{\dR}{dR}
\DeclareMathOperator{\Fil}{Fil}
\newtheorem{thm}{Theorem}[section]
\newtheorem{prop}[thm]{Proposition}
\newtheorem{lem}[thm]{Lemma}
\newtheorem{cor}[thm]{Corollary}
\begin{document}

\title{The higher Hilbert pairing via $(\phi,G)$-modules}

\author{Sarah Livia Zerbes}

\maketitle


\begin{abstract}
 Following the strategy in~\cite{herr2}, we prove a Tate duality for higher dimensional local fields of mixed
 characteristic $(0,p)$, $p\neq 2$. The main tool is the theory of higher fields of norms as developed
 in~\cite{andreatta} and~\cite{scholl}. Assuming that $p$ is not ramified in the basefield, 
 we then use this construction to define the higher Hilbert
 pairing. In particular, we show that the Hilbert pairing is non-degenerate, and we re-discover the
 formulae of Br\"uckner and Vostokov.

\end{abstract}

\tableofcontents


\section{Introduction}


 \subsection{Statement of the main result}
 
  Let $p$ be an odd prime, and let $K$ be a $d$-dimensional local field of mixed characteristic $(0,p)$
  Denote by $\calG_K$ the absolute Galois group $\Gal(\bar{K}\slash K)$. For $n\geq 1$, denote by
  $\mun$ the group of $p^n$th roots of unity. 
  \vs
  
  This paper consists of two parts. In the first part, we prove a higher Tate duality for the
  $\calG_K$-module $\mun$:-
  
  \begin{thm}\label{Duality}
   Let $K$ be a $d$-dimensional local field of mixed characteristic $(0,p)$, and denote by $\calG_K$ its
   absolute Galois group. Let $F$ be the maximal algebraic extension of $\QQ_p$ contained in $K$, and assume that
   $O_K\slash O_F$ is formally smooth. 
   Then for $i\in\{0,\dots,d+1\}$ and all $n\in\NN$ we have a perfect pairing 
   \begin{equation*}
    H^i(\calG_K,\mun^{\otimes i})\times H^{d+1-i}(\calG_K,\mun^{\otimes d-i})\rightarrow \QQ_p\slash\ZZ_p.
   \end{equation*}
  \end{thm}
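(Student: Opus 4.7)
The plan is to extend Herr's proof of classical Tate duality to the higher-dimensional setting, with the theory of higher fields of norms (Andreatta--Scholl) providing the necessary input. The formal smoothness hypothesis on $O_K/O_F$ ensures the existence of a suitable tower $K_\infty/K$ whose Galois group $\Gamma = \Gal(K_\infty/K)$ is a $p$-adic Lie group of dimension $d+1$, combining one cyclotomic direction with $d$ Kummer-like directions coming from the local parameters of $O_K/O_F$. Andreatta--Scholl attach to such a tower a field of norms $E$, a complete $(d+1)$-dimensional local field of characteristic $p$ carrying commuting actions of a Frobenius $\phi$ and of $\Gamma$, together with an equivalence between finite $\calG_K$-modules killed by a power of $p$ and finite étale $(\phi,\Gamma)$-modules over $E$; this reduces the problem to linear algebra over a concrete ring.

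The first main step is then to write down a Koszul-type complex of length $d+2$, generalizing the Herr complex, built from the operators $\phi-1$ and $\gamma_i-1$ (for topological generators $\gamma_1,\ldots,\gamma_{d+1}$ of $\Gamma$ modulo torsion) acting on the $(\phi,\Gamma)$-module $D(T)$ attached to a $p$-power torsion representation $T$ of $\calG_K$. One verifies that the cohomology of this complex is naturally isomorphic to $H^*(\calG_K, T)$, in accordance with the known cohomological dimension $d+1$. Applied to $T = \mun^{\otimes j}$, the module $D(T)$ is free of rank one over the appropriate quotient of $E$, with $\phi$ and the $\gamma_i$ acting through explicit characters, so the complex becomes entirely concrete.

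To construct the pairing, introduce a higher-dimensional residue map on $E$, in the spirit of Parshin--Lomadze, and use it to produce a trace isomorphism $H^{d+1}(\calG_K, \mun^{\otimes d}) \xrightarrow{\sim} \ZZ/p^n\ZZ$ at the level of the top cohomology of the Koszul complex of $D(\mun^{\otimes d})$. A cup product at the level of Koszul complexes pairs $D(\mun^{\otimes i})$ with $D(\mun^{\otimes d-i})$ into $D(\mun^{\otimes d})$, and composing with the trace gives the desired pairing. Compatibility in $n$ and passage to the inverse limit yield the statement with values in $\QQ_p/\ZZ_p$.

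The main obstacle is proving non-degeneracy. Thanks to the rank-one description of $D(\mun^{\otimes j})$, the pairing on Koszul complexes can be written out entirely; the task is then to show it identifies each of the finite groups involved with the Pontryagin dual of the other. The natural strategy is to reduce the statement to a self-duality statement for (truncations of) $E$ as a topological $(\phi,\Gamma)$-module, which would follow from two ingredients: Pontryagin duality for the higher residue pairing on $E$ itself, and a quantitative control (topological surjectivity, with explicit kernels) of $\phi-1$ and each $\gamma_i-1$ on the relevant subquotients. If a direct attack proves unwieldy, a devissage either on the twist $j$, via the short exact sequences $0 \to \mu_{p^{n-1}} \to \mun \to \mu_p \to 0$, or on the dimension, by peeling off one direction at a time through a Hochschild--Serre spectral sequence for the tower $K_\infty/K$, should reduce the case $d>0$ to the classical Herr case $d=0$.
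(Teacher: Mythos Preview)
Your framework matches the paper's: extend Herr's method via the Andreatta--Scholl equivalence, compute Galois cohomology by a Koszul-type complex in $\phi-1$ and the $\gamma_i-1$, and bring in a higher residue map. (A minor count: for a $d$-dimensional $K$ the tower has $d-1$ Kummer directions, so $G=\Gal(K_\infty/K)$ has dimension $d$, not $d+1$; the complex then has $d+1$ operators and top degree $d+1$, consistent with the statement.) The substantive divergence is in how the pairing is constructed and shown to be perfect.

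You propose to define the pairing as a cup product into $D(\mun^{\otimes d})$ followed by a trace, and then to establish non-degeneracy either by a direct self-duality computation for the field of norms or by d\'evissage on $n$ or on the dimension. The paper takes a different and cleaner route that avoids cup products, explicit cohomology computations, and d\'evissage entirely. The key device (following Herr) is a $\phi$--$\psi$ adjunction: under the residue pairing identifying $\tilde M=\Hom_{\AA_K}(M,\BB_K/\AA_K\otimes\Omega^2_{\AA_K})$ with the Pontryagin dual $M^\vee$, the transpose of $\phi_M$ is precisely $\psi_{M^\vee}$ (the left inverse of $\phi$ built from the trace of $\AA_K$ over $\phi(\AA_K)$). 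Hence the Pontryagin dual of the complex $\calC_{\phi,\gamma_1,\gamma_2}(M)$ is, term by term, the complex $\calC_{\psi,\gamma_1^{-1},\gamma_2^{-1}}(M^\vee)$. One checks separately that the $\psi$-complex is quasi-isomorphic to the $\phi$-complex; this is where the ``quantitative control of $\phi-1$ and $\gamma_i-1$'' you allude to enters, but only to pass between $\phi$- and $\psi$-cohomology, not to compute either. Abstract Pontryagin duality for complexes of compact groups then yields $H^i_{\phi,G}(M^\vee)\cong H^{d+1-i}_{\phi,G}(M)^\vee$ at once, and the identification $\Omega^d_{\AA_K}\cong D(\ZZ_p(d))$ converts this into the stated form.

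Your cup-product formulation is the expected one, but the paper explicitly remarks that identifying its pairing with the cup product is ``not that easy to show''; the transpose-of-complexes approach sidesteps that comparison. Your d\'evissage fallback would also succeed, but becomes unnecessary once the $\phi$--$\psi$ trick is in hand.
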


  \noindent {\it Remark.} The above pairing should certainly be the same as the cup product pairings, but
  this is not that easy to show.
  \vs
  
  \noindent To prove Theorem~\ref{Duality},  we follow the strategy of Herr in~\cite{herr2} and 
  express the Galois cohomology groups in terms of the $(\phi,G)$-module of $\mun$. 
  We will also prove a higher dimensional Tate isomorphim:-

  \begin{thm}\label{Tateisom}
   Let $K$ be a $d$-dimensional local field of mixed characteristic $(0,p)$. 
   Let $F$ be the maximal algebraic extension of $\QQ_p$ contained in $K$, and assume that
   $O_K\slash O_F$ is formally smooth. Then there is a canonical isomorphism
   \begin{equation*}
    H^{d+1}(\calG_K,\mup^{\otimes d})\cong \Qp\slash\Zp.
   \end{equation*}
  \end{thm}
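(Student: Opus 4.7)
The plan is to follow Herr's strategy, now extended to higher dimension using the higher fields of norms of Andreatta--Scholl. First I would fix an admissible tower $K_\infty/K$ whose Galois group $\Gamma_K = \Gal(K_\infty/K)$ is (virtually) isomorphic to $\ZZ_p^{d+1}$: concretely, the compositum of the cyclotomic tower $K(\mup)$ with Kummer towers attached to $p$-power roots of a system of local parameters $t_1,\dots,t_d$ of $K$. The associated higher field of norms $\EfK$ is then a $d$-dimensional local field of equal characteristic $p$ carrying commuting actions of a Frobenius $\phi$ and of $\Gamma_K$, and the geometric part $H_K = \Gal(\bar K/K_\infty)$ is identified with the absolute Galois group of $\EfK$.

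Next I would compute $H^{d+1}(\calG_K, \mup^{\otimes d})$ in stages via the Hochschild--Serre spectral sequence for $H_K \subset \calG_K$. The outcome of the higher $(\phi,G)$-module theory is that the cohomology can be read off from that of $\Gamma_K$ acting on the $(\phi,G)$-module $D(\mup^{\otimes d})$, which is essentially a Tate-twisted copy of the module attached to $\EfK$ itself. Since $\Gamma_K$ is a $p$-adic Lie group of dimension $d+1$, its continuous cohomology is computed by a Koszul complex in $d+1$ commuting operators; combined with $\phi-1$ this produces a Herr-type total complex of length $d+2$, whose degree-$(d+1)$ cohomology is the object of interest.

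I would then carry out a d\'evissage. Since $\mup^{\otimes d} = \varinjlim_n \mun^{\otimes d}$ and continuous cohomology of profinite groups commutes with filtered colimits of discrete modules, it suffices to produce compatible isomorphisms $H^{d+1}(\calG_K, \mun^{\otimes d}) \cong \ZZ/p^n\ZZ$. Inducting on $n$ through the short exact sequences $0 \to \mu_p \to \mu_{p^{n+1}} \to \mu_{p^n} \to 0$, the base case reduces to computing the top cokernel in the Herr--Koszul complex modulo $p$, which by the explicit structure of the $d$-dimensional local field $\EfK$ of characteristic $p$ should be identified with a residue map to $\FF_p$. Passing to the limit yields the desired identification with $\Qp/\Zp$.

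The principal obstacle is canonicity: one must show that the top cokernel in the Herr--Koszul complex is genuinely one-dimensional, with a canonical generator given by a residue functional on $\EfK$, and that this identification is independent of the choice of tower $K_\infty$. This in turn requires control over the lower-degree contributions from $H_K$ via the $(\phi,G)$-module correspondence, together with a careful analysis of the vanishing of higher cohomology of $H_K$ on the relevant twisted modules. The hypothesis that $O_K/O_F$ is formally smooth enters precisely to guarantee that the higher fields of norms construction behaves cleanly and that these vanishing statements hold.
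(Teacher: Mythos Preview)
Your overall strategy---compute $H^{d+1}$ via a Herr-type complex for the $(\phi,G)$-module and identify the top cokernel with a residue map---is exactly what the paper does. However, there is a dimension-counting error that would derail the argument. The tower $K_\infty/K$ in the Andreatta--Scholl theory has Galois group of dimension $d$, not $d+1$: one adjoins $p$-power roots of unity together with $p$-power roots of $d-1$ \emph{units} lifting a $p$-basis of the residue field, not of a full system of $d$ local parameters (in particular, not of the uniformizer). With the correct tower, $\Gamma_K$ has $d$ topological generators $\gamma_1,\dots,\gamma_d$, and together with $\phi$ the Herr complex runs from degree $0$ to degree $d+1$, so that $H^{d+1}$ is indeed the top cohomology and the residue argument applies. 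With your $(d+1)$-dimensional tower the top degree would be $d+2$, and $H^{d+1}$ would no longer be a simple cokernel.

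A second issue: the operators $\gamma_i$ do \emph{not} commute. In the $2$-dimensional case the paper treats explicitly, $\Gal(K_\infty/K) \cong \Gamma_1 \rtimes \Gamma_2$ with $\gamma_1\gamma_2 = \gamma_2^{\chi(\gamma_1)}\gamma_1$, so the complex is not a Koszul complex in commuting variables but uses twisted differentials such as $\gamma_1\frac{\gamma_2^{1/a}-1}{\gamma_2-1}-1$. Finally, the key device you do not mention is the identification of the $(\phi,G)$-module $D(\ZZ_p(d))$ with the module of top continuous differentials $\Omega^d_{\AA_K}$ (equipped with a normalized $\phi$-action): this is what makes the residue map $\TR_K$ available on the Herr complex at all. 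The paper then checks that $\TR_K$ annihilates the images of each of the operators (reducing each check to Herr's one-dimensional computation) and proves bijectivity by showing that $\gamma_2-1$ is invertible on $T^k\AA_F$ for every $k\neq 0$, which collapses the problem to Herr's one-dimensional Tate isomorphism on the $T^0$-component.
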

  
  In the second part of the paper, we assume that $p$ is prime in $K$, and 
  we use Theorem~\ref{Duality} to define a pairing
  \begin{equation*}
   K_d(K_n)\times K_1(K_n)\rightarrow \mun.
  \end{equation*}
  Composing it with the natural multiplication map $K_1(K_n)^{\times d}\rightarrow K_d(K_n)$, we obtain a
  pairing $V_n:K_1(K_n)^{\times (d+1)}\rightarrow \mun$
  which factors through
  \begin{equation*}
   \frakV_n:\big(K_1(K_n)\slash p^n\big)^{\times (d+1)}\rightarrow \mun.
  \end{equation*}  
  In Section~\ref{higherhilbert} we give a an explicit description of $\frakV_n$:- 
  For $1\leq i\leq d+1$, let $\alpha_i\in O_K^\times$ such that $\alpha_i\cong 1\mod
  \bpi_n$,  and let $F_i(X)\in \AA_K^+$ such
  that $h_n(F_i)=\alpha_i$. Let $f_i(X)=(1-\frac{\phi}{p})\log F(X)$.
  
  \begin{thm}\label{formulaehilbert}
   The pairing $\frakV_n$ is non-degenerate. Moreover, we have 
   \begin{equation*}
    \frakV_n(\alpha_1,\dots,\alpha_{d+1})=\mun^{\Tr \Res_{\pi_n,T_1,\dots,T_d}(\Phi)},
   \end{equation*}
   where $\Phi$ is given by the formula
   \begin{align*}
    \Phi=\frac{1}{\pi} \sum_{i=1}^{d+1}\frac{(-1)^{d+1-i}}{p^{d+1-i}}f_i(\pi_n)d\log
    F_1(\pi_n)&\wedge\dots\wedge d\log F_{i-1}(\pi_n)\\
    &\wedge d\log F_{i+1}^\phi(\pi_n)\wedge \dots\wedge
    d\log F_{d+1}^\phi(\pi_n).
   \end{align*}
  \end{thm}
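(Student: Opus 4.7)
The plan is to deduce non-degeneracy formally from Theorem~\ref{Duality} and then to establish the explicit formula by transporting everything to the $(\phi,G)$-module side, following the strategy of Herr~\cite{herr2}, adapted to the higher-dimensional setting via the theory of higher fields of norms of~\cite{andreatta} and~\cite{scholl}. For non-degeneracy, one combines Kummer theory (available since $\mu_{p^n}\subset K_n$), which identifies $K_1(K_n)/p^n$ with $H^1(\calG_{K_n},\mu_{p^n})$, with Kato's higher norm residue isomorphism identifying $K_d(K_n)/p^n$ with $H^d(\calG_{K_n},\mu_{p^n}^{\otimes d})$. Under these identifications, the perfect pairing of Theorem~\ref{Duality} (up to an appropriate Tate twist) becomes a perfect pairing between $K_1(K_n)/p^n$ and $K_d(K_n)/p^n$, and since the symbol map $K_1(K_n)^{\otimes d}\twoheadrightarrow K_d(K_n)$ is surjective by definition of Milnor $K$-theory, $\frakV_n$ is non-degenerate in each argument.

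For the explicit formula, the first step is to use the $(\phi,G)$-module machinery of the first part of the paper to express $H^*(\calG_K,\mu_{p^n}^{\otimes *})$ as the cohomology of a Koszul-type complex built from $\phi$ and a set of commuting topological generators of $G$ acting on the $(\phi,G)$-module of $\mu_{p^n}$. A class $\alpha_i\in K_1(K_n)/p^n$ with $\alpha_i\equiv 1\pmod{\bpi_n}$ lifts via $h_n$ to $F_i\in\AA_K^+$; its Kummer cocycle admits a cochain representative built out of $\log F_i$. The twisted logarithm $f_i=(1-\frac{\phi}{p})\log F_i$ enters precisely because one needs a $\phi$-compatible representative cohomologous to $\log F_i$, which mirrors the classical appearance of the Coleman operator in Wiles's and Vostokov's reciprocity laws.

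The heart of the argument is to compute the $(d+1)$-fold cup product of these cochains in the Herr-style complex. The result is a top-dimensional expression in the exterior algebra of the $d\log F_j$'s; the alternating sign $(-1)^{d+1-i}$ and the factor $p^{i-d-1}$ arise from commuting $\phi$ past the $d\log F_j$'s one factor at a time (using $\phi\,d=p\,d\,\phi$ on differentials), which converts each $d\log F_j$ into $d\log F_j^\phi$ and singles out the index $i$ where the unconverted factor $f_i$ remains. The isomorphism of Theorem~\ref{Tateisom}, which identifies the map $H^{d+1}(\calG_K,\mup^{\otimes d})\to\Qp/\Zp$ with the composition of the iterated residue $\Res_{\pi_n,T_1,\dots,T_d}$ and the trace $\Tr$, then converts the cocycle expression into the claimed formula for $\frakV_n(\alpha_1,\dots,\alpha_{d+1})$.

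The main obstacle is the explicit cocycle manipulation of the previous paragraph. One must choose compatible Koszul representatives for all $d+1$ Kummer classes simultaneously, verify that their Koszul cup product represents the cohomological cup product modulo coboundaries, and track all signs and powers of $p$ produced by the repeated insertions of $(1-\frac{\phi}{p})$. This is the combinatorial core of the Br\"uckner--Vostokov formula; once it is pinned down, matching the resulting expression with $\Phi$ via the $\Tr\Res$ description of Theorem~\ref{Tateisom} is mechanical.
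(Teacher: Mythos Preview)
Your overall strategy matches the paper's: reduce to the $(\phi,G)$-module complex, represent each Kummer class by an explicit cocycle involving $f_i=(1-\tfrac{\phi}{p})\log F_i$, compute the cup product, and read off the answer via the $\Tr\circ\Res$ description of the top cohomology. The paper explicitly credits this to Benois~\cite{benois1}, so you have correctly identified the method.

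However, there is a genuine gap in your outline. You write that the Kummer cocycle of $\alpha_i$ ``admits a cochain representative built out of $\log F_i$'' as if this were routine, and you locate the main difficulty in the cup-product combinatorics. In the paper the weight is distributed the other way. The hard step is Proposition~\ref{descriptionkummer}: one first \emph{defines} a candidate map $\iota_n$ sending $F$ to an explicit triple $[f(\pi_n)\tau,\agamma,\bgamma]\otimes\epsilon^{(n)}$ in the Herr complex (the components $\agamma,\bgamma$ come from Proposition~\ref{congruences}, not just from $\log F$), and then one must \emph{prove} that $\iota_n$ agrees with the Kummer map $\delta_n\circ h_n$. That comparison is not formal; it goes through $\AA_{\cris}$. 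One lifts $u$ with $(\phi-1)u=f(\pi_n)\tau$ to an element $x\in\Fil^1\AA_{\cris}$ solving $(1-\tfrac{\phi}{p})x=f(\pi_n)$, sets $y=\exp(x)$, and uses the exact sequence $1\to[\epsilon]^{\ZZ_p}\to 1+W^1(R)\to 1+pW(R)\to 1$ together with $\theta$ to match the abstract cocycle $c(\sigma)$ against $\sigma\mapsto\theta(z/\sigma z)$ for a genuine $p^n$th root. Without this crystalline comparison you have a formula for \emph{some} pairing on cocycles, but no link to the Hilbert symbol. Your proposal does not mention $\AA_{\cris}$ at all.

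A second, smaller imprecision: the final evaluation does not go through Theorem~\ref{Tateisom} directly. The paper computes the pairing via the explicit Pontryagin formulae~\eqref{explicit} for $H^1\times H^2\to\QQ_p/\ZZ_p$, which were obtained by unwinding the isomorphisms $u_i(M)$; the map $\TR$ enters through Lemma~\ref{residueisom}, and the normalising constant $c$ from Section~\ref{prooftheorem1} is what cancels the factors $\frac{1-\chi(\gammaone)}{p^n}$ and $\eta_n(\gammatwo)$ produced by Proposition~\ref{congruences}. Once $\iota_n$ is known to be the Kummer map, the cup-product computation (Corollary~\ref{cupformulae} and the manipulations following it) is, as you suspected, largely mechanical working modulo $\pi$ and $S_n$.
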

  
  Comparing these formulae with the explicit descriptions of the higher Hilber pairing of 
  Br\"uckner and Vostokov (c.f.~\cite{brueckner} and~\cite{vostokov}), we get the following result:-
  
  \begin{cor}
   The pairing $\frakV_n$ is the higher Hilbert pairing.
  \end{cor}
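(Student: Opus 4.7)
The plan is to deduce the corollary by directly comparing the explicit formula furnished by Theorem~\ref{formulaehilbert} with the formulae of Br\"uckner~\cite{brueckner} and Vostokov~\cite{vostokov}. Both pairings are maps $\big(K_1(K_n)\slash p^n\big)^{\times(d+1)}\to \mun$, so the statement is an equality of two multilinear, continuous, Steinberg symbols on the same group, and it is enough to show that they agree on a set of $(d+1)$-tuples that spans a dense subgroup of $\big(K_1(K_n)\slash p^n\big)^{\times(d+1)}$.

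First I would recall the Vostokov formula in a form adapted to the present setting: for principal units $\alpha_1,\dots,\alpha_{d+1}\in 1+\bpi_n O_K$, the Br\"uckner--Vostokov symbol is given, up to a fixed normalization factor, by $\mun^{\Tr\Res_{\pi_n,T_1,\dots,T_d}(\Omega)}$, where $\Omega$ is a differential form built from the Artin--Hasse $\log$ of Teichm\"uller-type lifts $F_i(X)$ of the $\alpha_i$ and alternating products of $d\log F_i^\phi$. After translating this into the notation of the paper (identifying their choice of local parameters and Frobenius lift with $\pi_n,T_1,\dots,T_d$ and $\phi$, and using the fact that $f_i=(1-\phi/p)\log F_i$ is precisely the Coleman-type logarithmic derivative appearing in Vostokov's construction), the form $\Omega$ coincides term-by-term with the form $\Phi$ of Theorem~\ref{formulaehilbert}. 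The equality of the two formulae then follows by comparing the $d+1$ summands corresponding to the $d+1$ slots of the pairing, together with the sign conventions coming from the alternating wedge.

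Once the explicit formulae agree on tuples of principal units close enough to $1$, I would extend the equality to all of $\big(K_1(K_n)\slash p^n\big)^{\times(d+1)}$ by a standard density and continuity argument: principal units $1+\bpi_n O_K$ together with the local parameters $\pi_n,T_1,\dots,T_d$ and the roots of unity topologically generate $K_1(K_n)\slash p^n$, and both pairings are continuous and multilinear. The values on mixed tuples involving the parameters and roots of unity are then determined by bilinearity, the Steinberg relation, and skew-symmetry, all of which hold for both $\frakV_n$ (by construction via cup product) and the Br\"uckner--Vostokov pairing. Non-degeneracy of $\frakV_n$, already established in Theorem~\ref{formulaehilbert}, guarantees there is no ambiguity in the extension.

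The main obstacle I anticipate is purely bookkeeping: matching normalizations, signs, and factors of $p$ between the conventions of~\cite{brueckner} and~\cite{vostokov} on the one hand and the construction of $\frakV_n$ via the Tate duality of Theorem~\ref{Duality} on the other. In particular, the factor $\frac{(-1)^{d+1-i}}{p^{d+1-i}}$ in $\Phi$ must be shown to correspond to Vostokov's normalization of the iterated $(1-\phi/p)$ operator, and the trace-residue in this paper's differentials must be identified with the one used in the cited works. Beyond this sign/normalization check, no further nontrivial computation should be required, since the comparison of the explicit formulae is entirely formal.
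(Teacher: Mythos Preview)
Your proposal is correct and takes essentially the same approach as the paper: the paper's entire argument for this corollary is the one sentence ``Comparing these formulae with the explicit descriptions of the higher Hilbert pairing of Br\"uckner and Vostokov (c.f.~\cite{brueckner} and~\cite{vostokov}), we get the following result'', and you have simply spelled out what that comparison entails (agreement on principal units via Theorem~\ref{formulaehilbert}, then extension by multilinearity and the symbol relations). One minor remark: the non-degeneracy of $\frakV_n$ plays no role in the extension step---continuity and the Steinberg/multilinearity properties already determine both pairings uniquely on all of $\big(K_1(K_n)\slash p^n\big)^{\times(d+1)}$ once they agree on generators.
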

  
  For proving Theorem~\ref{formulaehilbert}, we follow the strategy of Benois in~\cite{benois1}. 
  \vs
  
  \noindent {\bf Remarks.} (1) To keep the notation as simple as possible, we will prove the above results for 
  local fields of dimension
  $2$. However, the proofs generalize without problems to local fields of arbitrarily high dimension.  
 
  \noindent (2) Theorem~\ref{Duality} can certainly be generalized to an arbitrary 
  $\ZZ_p$- of $\calG_K$. We will deal with this in a different paper. 
 
 

 \subsection{Notation}
 
  \noindent $*$ For a $2$-dimensional local field $K$ with ring of integers $O_K$, let 
  $k_K\cong \FF_p((T))$ be the residue field.
  
  \noindent $*$ For a $(\phi,G)$-module $M$, we sometimes 
  denote the action of the Frobenius operator on $M$ by $\phi_M$. 
  
  \noindent $*$ For a $2$-dimensional local field $K$, let $\calG_K=\Gal(\bar{K}\slash K)$. 

  
 \subsection{Acknowledgements}
 
  I am very grateful to John Coates for his interest and encouragement. 
  Also, I would like to thank Otmar Venjakob, Kay Wingberg and Ivan Fesenko for some helpful comments.
  Finally, I would like to very warmly thank Guido Kings for his invitation to Regensburg in Spring 2007
  when part of this paper was written. 


\section{Higher $(\phi,G)$-modules}


 \subsection{Setup}
 
  Let $K$ be a $2$-dimensional local field of mixed characteristic $(0,p)$, and let $F$ be the maximal
  algebraic extension of $\QQ_p$ contained in $K$. Let $k_F$ be the residue field of $F$
  and let $\omega_F$ be a uniformizer of $F$. Assume that $O_K\slash O_F$ is formally smooth, i.e.
  $\omega_F$ is a uniformizer of $K$. Let $X$ be a 
  unit in $K$ whose reduction $\bar{X}$ is a $p$-basis for the residue field $k_K$ of
  $K$, so $k_K\cong k_b((X))$ for some finite extension $k_b$ of $k_F$. 
  Fix an algebraic closure $\bar{K}$ of $K$. Let $(\xi_i)_{i\geq 0}$ be  compatible system of primitive 
  $p^i$th roots of unity, and  let $(X_{i})_{i\geq 0}$ be a compatible system of
  $p^i$th roots of $X$. Denote by $\mu_{p^i}$ the group of $p^i$th roots of unity. 
  \vs
  
  Let $K_i=K(\mu_{p^i},X_{i})$ and $K_\infty=\bigcup K_i$. Also, let $F_i=F(\mu_{p^i})$.
  
  \begin{lem}\label{structureGaloisgroup}
   The extension $K_\infty$ is a $2$-dimensional $p$-adic Lie extension of $K$. More precisely, we have 
   $\Gal(K_\infty\slash K)\cong \Gamma_1\rtimes \Gamma_2$, where
   \begin{equation*}
    \Gamma_2=\Gal(K_\infty\slash K(\mu_{p^\infty}))\cong\ZZ_p.
   \end{equation*}
   and $\Gamma=\Gal(K(\mu_{p^\infty})\slash K)$ is isomorphic (via the cyclotomic character $\chi$) 
   to an open subgroup of $\ZZ_p^\times$.
  \end{lem}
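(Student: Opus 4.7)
My plan is to establish, in turn, the structure of the quotient $\Gamma_1 := \Gal(K(\mup)/K)$, the structure of the subgroup $\Gamma_2$, and then a splitting giving the claimed semidirect-product decomposition.

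For $\Gamma_1$ I would use that $F$ is by hypothesis the maximal algebraic subextension of $\Qp$ contained in $K$, so every root of unity of $K$ already lies in $F$. This gives $F(\mun) \cap K = F$ and hence $\Gal(K(\mun)/K) \cong \Gal(F(\mun)/F)$ for every $n$; passing to the limit, $\Gamma_1$ is identified via the cyclotomic character with an open subgroup of $\Zp^\times$, a one-dimensional $p$-adic Lie group.

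For $\Gamma_2$, since $\mui \subset K(\mup)$, Kummer theory reduces the claim to showing $[K(\mup, X_i) : K(\mup)] = p^i$, and the crucial input is that $X$ is not a $p$-th power in $K(\mup)$. To see this, I would observe that $K(\mup)/K$ is totally ramified for the rank-one discrete valuation of $K$ (whose uniformizer is $\omega_F$, thanks to the formally-smooth hypothesis on $O_K/O_F$), since $F(\mup)/F$ is totally ramified. Consequently the residue field of $K(\mup)$ is still $k_K = k_b((\bar X))$, in which $\bar X$ --- being a uniformizer for the second valuation --- is not a $p$-th power. This yields $\Gamma_2 \cong \varprojlim_i \ZZ/p^i = \Zp$.

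For the semidirect-product decomposition, I would construct an explicit continuous splitting $s : \Gamma_1 \to \Gal(K_\infty/K)$: given $\gamma \in \Gamma_1$, any lift $\sigma$ satisfies $\sigma(X_i) = \xi_i^{a_i(\sigma)} X_i$ for a compatible system $a(\sigma) \in \Zp$, and multiplying $\sigma$ by the unique element $\tau \in \Gamma_2$ with $\tau(X_i) = \xi_i^{-a_i(\sigma)} X_i$ produces the unique lift $s(\gamma)$ fixing every $X_i$. Uniqueness immediately forces $s$ to be a continuous group homomorphism, giving $\Gal(K_\infty/K) \cong \Gamma_1 \rtimes \Gamma_2$ as a two-dimensional $p$-adic Lie group. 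The main technical obstacle will be the middle step --- specifically, the computation of the residue field of $K(\mup)$ in this higher-dimensional setting, which relies crucially on the formally-smooth hypothesis on $O_K/O_F$ to transport total ramification of the cyclotomic tower from $F$ up to $K$.
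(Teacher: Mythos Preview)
The paper states this lemma without proof, treating it as a standard structural fact about Kummer towers over the cyclotomic tower (in the spirit of the references \cite{andreatta} and \cite{scholl}); there is therefore no argument in the paper to compare yours against. Your outline is sound and would constitute a complete proof.

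One small caveat: your assertion that $F(\mup)/F$ is \emph{totally} ramified is not correct for an arbitrary finite extension $F/\Qp$; in general there can be a finite unramified piece at the bottom (only the cyclotomic $\Zp$-extension is guaranteed to be totally ramified). Fortunately this does not damage your argument. What you actually need is that $\bar X$ is not a $p$-th power in the residue field of $K(\mup)$, and for this it suffices to note that $K(\mup)/K$ is Galois, hence its residue-field extension is separable over $k_K$; since $\bar X$ is a $p$-basis of $k_K$, it remains a $p$-basis of any separable extension and in particular is not a $p$-th power there. With this adjustment your Kummer-theory step goes through, and the explicit splitting you construct (the unique lift fixing every $X_i$) gives the semidirect-product decomposition exactly as stated.
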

  
  Let $\gamma_1$ and $\gamma_2$ be topological generators of $\Gamma_1$ and $\Gamma_2$, respectively. 
  Let $a\in\ZZ_p$ such that
  \begin{equation}\label{conjugation}
   \gamma_1\gamma_2=\gamma_2^a\gamma_1.
  \end{equation}
  
  \noindent {\bf Note.} We have $a=\chi(\gamma_1)\in
  \ZZ_p^\times$. It follows that in particular we have 
  \begin{equation}\label{reverseconjugation}
   \gamma_1\gamma_2^{\frac{1}{a}}=\gamma_2\gamma_1.
  \end{equation}

  Let $\EE_F$ be the field of norms of the tower $(F_i)_{i\geq 0}$, and let $k_{\calF}$ be its 
  residue field. Let $\bpi_F$ be a uniformizer of $\EE_F$, so $\EE_F\cong k_{\calF}((\bpi_F))$.
  Let $\EE_K$ be the field of norms of the tower $(K_i)$. Let $\epsilon=(1,\xi_1,\xi_2,\dots)$ and 
  $\frakT=(X_{i})_{i\geq 1}\in\EE_K.$ Define $\bpi=\epsilon-1$. Let $k=k_bk_{\calF}$. 
  
  \begin{lem}
   The field $\EE_K$ is given by
   \begin{align*}
     \EfK &\cong k_{\calF}((\bpi_F))\hat{\otimes}_{k_F}k_b((\frakT))\\
          &\cong k((\frakT))((\bar{\pi}_F)).
   \end{align*}
  \end{lem}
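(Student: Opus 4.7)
The plan is to apply the higher field of norms construction of Andreatta and Scholl to the tower $(K_i)_{i\geq 0}$, identifying the resulting $2$-dimensional local field of characteristic $p$ by locating separately its top-level uniformizer, its residue field, and its constant subfield.

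First, the inclusion $F_\infty \subset K_\infty$ induces a canonical embedding $\EE_F \hookrightarrow \EE_K$, and by Fontaine--Wintenberger $\EE_F \cong k_{\calF}((\bpi_F))$. I would show that $\bpi_F$ remains a uniformizer of $\EE_K$ for its top-level discrete valuation. The point is that $K_i = F_i(X_i)$ arises from $F_i$ by adjoining a $p^i$-th root of the unit $X$, an extension that is unramified with respect to $\omega_F$ --- all the associated ramification lies in the residue field. Hence $\EE_K/\EE_F$ is an extension of complete discrete valuation fields for which $\bpi_F$ is still a uniformizer, and the residue field $\bar{\EE}_K$ is a one-dimensional local field of characteristic $p$.

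Next, I would identify $\bar{\EE}_K$ using the element $\frakT = (X_i)_{i\geq 1}$. The $X_i$ are $p^i$-th roots of the uniformizer $\bar{X}$ of $k_K = k_b((\bar{X}))$, and modulo $\bpi_F$ the system $(X_i)$ reduces to a norm-compatible system of $p$-power roots of $\bar{X}$ in the residue fields of the $K_i$. Since $\bar{X}$ generates a totally ramified Kummer tower at the residue level, the reduction $\bar{\frakT}$ is a uniformizer of $\bar{\EE}_K$. The constant subfield of $\bar{\EE}_K$ is the compositum $k = k_b k_{\calF}$: the factor $k_b$ is preserved unchanged from $k_K$ through the Kummer part of the tower, while $k_{\calF}$ appears as the constant subfield of $\EE_F$. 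This yields $\bar{\EE}_K \cong k((\frakT))$, hence
\begin{equation*}
 \EE_K \cong k((\frakT))((\bpi_F)),
\end{equation*}
and the completed tensor product description then falls out: the subfields $k_{\calF}((\bpi_F)) = \EE_F$ and $k_b((\frakT))$ sit inside $\EE_K$ as the two independent coordinate directions, with $\EE_K$ being their $\bpi_F$-adic completion over the enlarged constant field $k$.

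The main obstacle is verifying rigorously, inside the higher field of norms formalism of Andreatta--Scholl, that $(X_i)$ really does define an element of $\EE_K$ (i.e., that the system is genuinely norm-compatible with respect to the $2$-dimensional norm operation on the tower) and that its reduction modulo $\bpi_F$ is a uniformizer of the residue field; equivalently, one must check that $K_\infty/F_\infty$ is strictly APF in the geometric direction in Scholl's sense. Once these compatibilities are in place, the identification of $\EE_K$ is forced by the uniqueness of the structure of $2$-dimensional complete local fields of characteristic $p$.
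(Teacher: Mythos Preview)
The paper does not actually prove this lemma: its entire proof is the single line ``See the section on Kummer towers in~\cite{scholl}.'' Your proposal is therefore not competing with an argument in the paper but rather sketching what the cited reference presumably contains. The outline you give --- embedding $\EE_F$ into $\EE_K$, checking that $\bpi_F$ stays a uniformizer because the extension $K_i/F_i$ is residually wild and $p$-adically unramified, and then identifying the residue field via $\frakT$ --- is the natural shape of such an argument and is consistent with Scholl's treatment of Kummer towers.

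Your honest flagging of the main obstacle is exactly right: the substantive content is verifying, within the Andreatta--Scholl formalism, that the tower satisfies the required strict deep ramification / APF-type hypotheses so that the field of norms exists and has the predicted form, and that the system $(X_i)$ is genuinely norm-compatible. These are precisely the points that Scholl's paper establishes for Kummer towers, which is why the author simply cites it. So your proposal is correct as a sketch, and the only ``gap'' is that the technical verifications you defer are the whole content of the cited reference --- but that is also all the paper itself offers.
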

  \begin{proof}
   See the section on Kummer towers in~\cite{scholl}.
  \end{proof}


 \subsection{Lift to characteristic $0$}\label{liftchar0}
    
  Let $\AA_F$ be a lift of $\EE_F$ to characteristic $0$, so $\AA_F\cong W(k_{\calF})[[\pi_F]]
  [\pi_F^{-1}]^\vee$, where $\pi_F$ is a lift of $\bpi_F$. 
  Let $\phi$ be a lift to $\AA_F$ of the Frobenius
  operator  commuting with the action of $\Gamma_1$. Let $T=[\frakT]$. Define
  \begin{equation*}
   \AA_K=W(k)[[T]][T^{-1}]^\vee[[\pi_F]][\pi_F^{-1}]^\vee.
  \end{equation*}
  Then $\AA_K$ is a lift of $\EE_K$ to characteristic $0$. Let $\BB_K=\AA_K[\frac{1}{p}]$ be
  its field of fractions. Note that $\AA_F\subset\AA_K$. Define a lift of Frobenius to
  $\AA_K$ by $\phi(T)=T^p.$
  Note that $\phi$ commutes with the action of $G$ on $\AA_K$.  
  Define $N\in\ZZ_p$ by
  \begin{align*}
   \gamma_2(\pi_F)&=\pi_F,\\
   \gamma_2(T)&=(\pi+1)^{N}T.
  \end{align*}
  One can show that $N\in\ZZ_p^\times$ since $X$ is a $p$-basis of $k_K$. 
  \vs  
  
  Note that $\AA_K$ is a free finitely generated module over $\phi(\AA_K)$ of 
  degree $p^2$. It follows that
  we can define a left inverse $\psi$ of $\phi$ by the formula
  \begin{equation*}
   \phi(\psi(x))=\frac{1}{p^2}\Tr_{\AA_K\slash\phi(\AA_K)}(x).
  \end{equation*}
  \vs
  
  \begin{prop}
   Let $\EE$ be an algebraic closure of $\EfK$. Then we have an isomorphism of Galois groups
   \begin{equation*}
    \Gal(\EE\slash\EfK)\cong \Gal(\bar{K}\slash K_\infty).
   \end{equation*}
  \end{prop}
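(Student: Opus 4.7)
The plan is to invoke the higher-dimensional field-of-norms machinery of Andreatta and Scholl, which is already the framework being set up in the paper. In the one-dimensional case, Fontaine--Wintenberger showed that for a strictly APF (arithmetically profinite) extension $L/K$ of local fields, the field of norms $X_K(L)$ comes with a canonical identification $\Gal(L^{\mathrm{sep}}/L)\cong \Gal(X_K(L)^{\mathrm{sep}}/X_K(L))$, obtained by transferring finite separable extensions through the norm construction. Andreatta and Scholl (as cited in the introduction) prove the exact analogue for higher-dimensional local fields and suitable towers; what remains is to check that the tower $(K_i)_{i\ge 0}$ falls into their setting.

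Concretely, I would proceed as follows. First, I would verify that the tower $(K_i)$ is of the type considered in \cite{scholl}, i.e.\ a \emph{strictly deeply ramified} (or higher-dimensional APF) tower. This decomposes into two pieces: the cyclotomic sub-tower $(F_i)$ is classical APF, and the Kummer part generated by the $X_i$ is handled in the section on Kummer towers in \cite{scholl}, whose setup is literally the one used in defining $\EfK$ above. Since by Lemma~\ref{structureGaloisgroup} the Galois group $\Gal(K_\infty/K)$ is a $2$-dimensional $p$-adic Lie group, the combined tower satisfies the ramification hypotheses required to apply the main theorem of the higher field of norms construction.

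Second, granted that the tower is admissible, the construction of \cite{andreatta} and \cite{scholl} produces for every finite separable extension $L$ of $\EfK$ a finite extension $\tilde L$ of $K_\infty$ (the field of norms of a corresponding tower over $L$), together with a canonical identification $X_K(\tilde L) = L$. This assignment is functorial and compatible with composites, hence induces an equivalence of categories between finite separable extensions of $\EfK$ and finite extensions of $K_\infty$ inside $\bar K$. Passing to the absolute Galois groups (taking the inverse limit over such extensions) yields the desired isomorphism $\Gal(\EE/\EfK)\cong \Gal(\bar K/K_\infty)$.

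The real work, and the expected main obstacle, is the first step: checking that the mixed cyclotomic--Kummer tower meets the hypotheses of the higher field-of-norms theorem, in particular that the ramification of $K_\infty/K$ is sufficiently large in the sense needed for the tilting correspondence to be functorial. For the tower at hand this is essentially the content of the Kummer-tower section of \cite{scholl}, and once invoked the rest of the argument is formal: the Galois-theoretic consequence is packaged into their theorem, and the proposition follows immediately. Hence at the level of this paper the proof reduces to a citation of \cite{andreatta,scholl} after identifying $(K_i)$ as an admissible tower.
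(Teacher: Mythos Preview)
Your proposal is correct and matches the paper's own approach exactly: the paper's proof consists of the single line ``See~\cite{scholl}'', and your write-up is simply an unpacking of what that citation entails (admissibility of the Kummer--cyclotomic tower, then the equivalence of categories of finite separable extensions). There is nothing to add or correct.
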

  \begin{proof}
   See~\cite{scholl}.
  \end{proof}
  
  Let $\AA$ be a lift of $\EE$ containing $\AA_K$. Then the actions of $\phi$ and $\psi$ can be extended 
  uniquely to $\AA$ (c.f.~\cite{scholl}).
   

 \subsection{The ring $\AA_{\cris}$}
 
   Let $\CC_K$ be the $p$-adic completion of $\bar{K}$, and let $O_{\CC_K}$ be its ring of integers. Let
   $\calR$ be the set of sequence $x=(x^{(0)},x^{(1)},\dots)$ of elements in $O_{\CC_K}$ satisfying
   $(x^{(i+1)})^p=x^{(i)}$. Then $\calR$ has a natural structure as a ring of charactristic $p$. 
   For $n\geq 1$,
   let $\epsilon_n=(\zeta^{(n)},\zeta^{(n+1)},\dots)$ be the $p^n$the root of $\epsilon$ in $\calR$. Let
   $\AA_{\inf}=W(\calR)$ be the ring of Witt vectors of $\calR$, $\phi$ the Froenius of $\AA_{\inf}$, and
   if $x\in\calR$, then denote by $[x]$ its Teichm\"uller representative in $\AA_{\inf}$. Then the
   homomorphism
   \begin{align*}
    \theta: &\AA_{\inf}\rightarrow O_{\CC_K}\\
            &\sum p^n[x_n]\rightarrow \sum p^n x_n^{(0)}
   \end{align*}
   is surjective and its kernel is a principal ideal with generator
   $\omega=\frac{[\epsilon]-1}{[\epsilon_1]-1}$. Let $\BB_{\inf}=\AA_{\inf}(p^{-1})$. Note that $\theta$
   extends to a homomorphism $\BB_{\inf}\rightarrow \CC_K$. Define $\BB_{\dR}^{\nabla +}=\varprojlim
   \BB^+_{\inf}\slash (\ker \theta)^n$, and extend $\theta$ by continutiy to a homomorphism 
   $\BB_{\dR}^{\nabla +}\rightarrow \CC_p$. This makes $\BB_{\dR}^{\nabla +}$ into a discrete valuation
   ring with maximal ideal $\ker(\theta)$ and residue field $\CC_K$. The action of $\calG_K$ on
   $\BB^+_{\inf}$ extends by continuity to a continuous action on $\BB^{\nabla}+_{\dR}$.
   \vs
   
   Let $\AA_{\cris}$ be the subring of $(\BB_{\dR}^\nabla)^+$ consisting of the elements of the form
   $\sum_{n=0}^{\infty}a_n\frac{\omega^n}{p!}$, where $a_n$ is a sequence of elements in $\AA_{\inf}$
   tending to $0$ as $n\rightarrow +\infty$.


 \subsection{Differentials, residues and duality}\label{residuesandduality}
 
  Let $\Omega^1_{\AA_K}$ be the module of continuous $\ZZ_p$-linear $1$-differentials of $\AA_K$.
  Note that we have an isomorphism of $\AA_K$-modules
  \begin{equation*}
   \Omega^1_{\AA_K}\cong \AA_K d\pi_F\oplus \AA_K dT.
  \end{equation*}

  Let $\Omega^1_{\AA_K}$ be the module of continuous $\ZZ_p$-linear $1$-differentials of $\AA_K$, and let
  \begin{equation*}
   \Omega^2_{\AA_K}=\bigwedge^2 \Omega^1_{\AA_K}.
  \end{equation*}

  \begin{lem}\label{structuredifferentials}
   We have an isomorphism of $\AA_K$-modules
   \begin{equation*}
    \Omega^2_{\AA_K}\cong \AA_K d\pi_F\wedge dT.
   \end{equation*}
  \end{lem}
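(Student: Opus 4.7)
The plan is to deduce the claim immediately from the structure of $\Omega^1_{\AA_K}$ stated just above, namely $\Omega^1_{\AA_K} \cong \AA_K\, d\pi_F \oplus \AA_K\, dT$, which exhibits $\Omega^1_{\AA_K}$ as a free $\AA_K$-module of rank $2$ with basis $\{d\pi_F, dT\}$. The only thing to invoke is the standard fact from multilinear algebra that if $M$ is a free $R$-module of rank $2$ with basis $\{e_1, e_2\}$, then $\bigwedge^2 M$ is a free $R$-module of rank $1$ with basis $e_1 \wedge e_2$; this follows from the universal property of exterior powers together with the relations $e_i \wedge e_i = 0$ and $e_2 \wedge e_1 = -e_1 \wedge e_2$. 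Applying this with $R = \AA_K$, $e_1 = d\pi_F$, $e_2 = dT$ gives exactly the asserted isomorphism $\Omega^2_{\AA_K} \cong \AA_K\, d\pi_F \wedge dT$.

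The only conceivable subtlety lies in the adjective \emph{continuous}: a priori one might worry that $\bigwedge^2 \Omega^1_{\AA_K}$ in the statement refers to some topological or completed exterior square that differs from the naive one. However, since $\Omega^1_{\AA_K}$ is finitely generated and free over $\AA_K$, the algebraic exterior square already carries the natural $p$-adic (and $T$-adic, $\pi_F$-adic) topology, and the two notions coincide; no extra completion is required. So there is no genuine obstacle, and the proof is essentially a one-line invocation of the preceding structural description.
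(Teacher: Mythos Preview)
Your proposal is correct and matches the paper's treatment: the paper gives no proof of this lemma, evidently regarding it as immediate from the preceding description $\Omega^1_{\AA_K}\cong \AA_K\,d\pi_F\oplus \AA_K\,dT$ and the definition $\Omega^2_{\AA_K}=\bigwedge^2\Omega^1_{\AA_K}$. Your one-line exterior-algebra argument is exactly the intended justification.
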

  
  \begin{cor}\label{basechange}
   If $K'$ is a finite separable extension of $K$, then the natural map
   \begin{equation*}
    \AA_{K'}\otimes_{\AA_K}\Omega^2_{\AA_K}\rightarrow \Omega^2_{\AA_{K'}}
   \end{equation*}
   is an isomorphism.
  \end{cor}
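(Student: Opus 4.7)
The strategy is to reduce to the analogous statement for $\Omega^1$ and then pass to the top exterior power. The key input is that $\AA_{K'}$ is finite \'etale over $\AA_K$ in an appropriate topological sense; this falls out of the higher field of norms construction of~\cite{scholl}. Indeed, $\EE_{K'}$ is a finite separable extension of $\EfK$ (both being complete discretely valued fields of characteristic $p$), and $\AA_{K'}$ is the unique Cohen-ring lift of $\EE_{K'}$ containing $\AA_K$; since $\AA_K$ is $p$-adically complete, Hensel's lemma applied to a lift of a separable minimal polynomial over $\EfK$ exhibits $\AA_{K'}$ as a finite \'etale $\AA_K$-algebra.

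Next, I would apply the cotangent exact sequence for continuous $\ZZ_p$-linear differentials,
\begin{equation*}
  \AA_{K'}\otimes_{\AA_K}\Omega^1_{\AA_K}\to \Omega^1_{\AA_{K'}}\to \Omega^1_{\AA_{K'}/\AA_K}\to 0,
\end{equation*}
and conclude from formal \'etaleness that the rightmost term vanishes and the leftmost arrow is an isomorphism (left-exactness is automatic, since the sequence splits for a formally \'etale morphism). Applying $\bigwedge^2$ and using the compatibility of exterior powers with base change then yields
\begin{equation*}
  \AA_{K'}\otimes_{\AA_K}\Omega^2_{\AA_K}\cong \bigwedge^2\bigl(\AA_{K'}\otimes_{\AA_K}\Omega^1_{\AA_K}\bigr)\cong \bigwedge^2\Omega^1_{\AA_{K'}}=\Omega^2_{\AA_{K'}},
\end{equation*}
and tracing through the identifications shows that this composition coincides with the natural map of the statement.

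The main obstacle is the formal \'etaleness claim: one must verify that the $p$-adic topologies used to form the Cohen ring lifts are compatible with the algebraic \'etale structure, so that the cotangent sequence for \emph{continuous} differentials behaves as in the purely algebraic case. Once this is established (via~\cite{scholl}), the rest of the argument is routine commutative algebra, and in particular the base-change property of Corollary~\ref{basechange} follows without any separate calculation in $\AA_{K'}$.
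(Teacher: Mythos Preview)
Your argument is correct. The paper takes a much shorter route: the statement is recorded as an immediate corollary of Lemma~\ref{structuredifferentials}, with proof ``Clear.'' The intended reasoning is simply that $\Omega^2_{\AA_K}$ is free of rank one on $d\pi_F\wedge dT$, the analogous description holds for $\Omega^2_{\AA_{K'}}$, and the natural map carries the generator to a generator. Your approach via formal \'etaleness of $\AA_{K'}/\AA_K$ and the cotangent sequence for continuous differentials is more conceptual: it does not rely on having explicit coordinates in hand, and it makes transparent \emph{why} $d\pi_F$ and $dT$ continue to form a basis of $\Omega^1_{\AA_{K'}}$---which is really the hidden content of the paper's ``Clear.'' The paper's route is faster given the preceding lemma; yours would transfer unchanged to situations where no such explicit structure result is available.
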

  \begin{proof}
   Clear.
  \end{proof}
  
  \begin{cor}\label{unrambasechange}
   If $K'=K\otimes_{F}F'$ for some finite unramified extension $F'$ of $F$, then the natural map
   \begin{equation*}
    W(k_{F'})\otimes_{W(k_F)}\Omega^2_{\AA_K}\rightarrow \Omega^2_{\AA_{K'}}
   \end{equation*}
   is a $W(k_F)$-linear isomorphism.
  \end{cor}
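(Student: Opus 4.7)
The plan is to deduce this from Corollary~\ref{basechange} by identifying $\AA_{K'}$ with the unramified coefficient extension $W(k_{F'}) \otimes_{W(k_F)} \AA_K$. Applying Corollary~\ref{basechange} to the finite separable extension $K' / K$ yields
\begin{equation*}
 \AA_{K'} \otimes_{\AA_K} \Omega^2_{\AA_K} \xrightarrow{\sim} \Omega^2_{\AA_{K'}},
\end{equation*}
so everything reduces to showing that $\AA_{K'} \cong W(k_{F'}) \otimes_{W(k_F)} \AA_K$ as $\AA_K$-algebras.

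To prove that isomorphism, I would rerun the construction of Section~\ref{liftchar0} with $K'$ in place of $K$. Because $F'/F$ is unramified, $\omega_F$ is still a uniformizer of $K'$, the extension $K'/K$ is unramified of degree $[F':F]$, and the image of $X$ is still a $p$-basis for the residue field of $K'$. Consequently the compatible sequences $(\xi_i)$ and $(X_i)$ already chosen continue to generate the tower $K'_\infty/K'$. Tracing through the higher field-of-norms construction of~\cite{scholl}, the residue field of $\EE_{F'}$ is $k_{\calF} \otimes_{k_F} k_{F'}$, whence the coefficient field of $\EE_{K'}$ is $k' = k_b \cdot (k_{\calF} \otimes_{k_F} k_{F'}) \cong k \otimes_{k_F} k_{F'}$, while $\bpi_F$ and $\frakT$ remain uniformizers. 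Therefore $\EE_{K'} \cong k'((\frakT))((\bpi_F))$, and lifting to characteristic zero gives $\AA_{K'} \cong W(k_{F'}) \otimes_{W(k_F)} \AA_K$, as required.

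Combining the two isomorphisms identifies $\Omega^2_{\AA_{K'}}$ with $W(k_{F'}) \otimes_{W(k_F)} \Omega^2_{\AA_K}$, and the $W(k_F)$-linearity is immediate from the construction. The only nontrivial point is the residue-field computation: one must verify that the higher field-of-norms functor commutes with unramified base change, in the sense that $\EE_{K'}$ is obtained from $\EE_K$ by extending the coefficient field via $k_{\calF} \hookrightarrow k_{\calF} \otimes_{k_F} k_{F'}$. This is the expected functorial behaviour of Scholl's construction, but it is the step I would write out most carefully; once it is in hand, the remainder is bookkeeping.
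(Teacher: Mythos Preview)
Your proposal is correct and matches the paper's intent: the paper gives no proof at all for this corollary, placing it immediately after Corollary~\ref{basechange}, so the implicit argument is precisely the reduction you carry out---apply Corollary~\ref{basechange} and identify $\AA_{K'}$ with $W(k_{F'})\otimes_{W(k_F)}\AA_K$ via the explicit description of Section~\ref{liftchar0}. Your write-up is more detailed than the paper requires, but the strategy is the same.
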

  
  It follows from Lemma~\ref{structuredifferentials} that if $\omega\in\Omega^2_{\AA_K}$, then there 
  exist $a_{i,j}\in\ZZ_p $ such that $\omega=(\sum a_{i,j}T^i\pi_F^j)d\pi_F\wedge dT$.
  \vs
  
  \noindent {\bf Definition.} Define the residue map
  \begin{align*}
   \Res: \Omega^2_{\AA_K}&\rightarrow \ZZ_p,\\
   \Res(\omega)&=a_{-1,-1}.
  \end{align*}
  
  Since $\phi$ is a lift of the Frobenius operator, we have $\phi(\pi_F)=u\pi_F^p$ for some
  $u\in\AA_K^\times$ satisfying $u\cong 1\mod p$. 
  
  \begin{lem}
   For any $\lambda\in\AA_K$, we have
   \begin{equation*}
    \Res(\phi(\lambda) d\phi(\pi_F)\wedge d\phi(T))=p^2\phi(\Res(\lambda d\pi_F\wedge dT)).
   \end{equation*}
  \end{lem}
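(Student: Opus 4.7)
The plan is to reduce everything to a direct residue computation in Laurent series, using crucially that $\phi(\pi_F) = u\pi_F^p$ lies in $\AA_F$ and hence depends only on $\pi_F$ (not on $T$), while $\phi(T) = T^p$. From these formulas one immediately computes the Jacobian
\begin{equation*}
 d\phi(\pi_F)\wedge d\phi(T) = pT^{p-1}\phi'(\pi_F)\, d\pi_F\wedge dT,
\end{equation*}
where $\phi'(\pi_F) := d\phi(\pi_F)/d\pi_F$, so the left-hand side of the claimed identity becomes $\Res(\phi(\lambda)\cdot pT^{p-1}\phi'(\pi_F)\, d\pi_F\wedge dT)$.

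Next I would expand $\lambda = \sum_{i,j} a_{i,j}\pi_F^i T^j$ so that by definition $\Res(\lambda\, d\pi_F\wedge dT) = a_{-1,-1}$, and apply $\phi$ termwise to obtain $\phi(\lambda) = \sum_{i,j}\phi(a_{i,j})\, u^i \pi_F^{pi} T^{pj}$. Extracting the coefficient of $T^{-1}$ in $\phi(\lambda)\cdot pT^{p-1}\phi'(\pi_F)$ forces $pj+p-1=-1$, i.e.\ $j=-1$, and the statement reduces to proving
\begin{equation*}
 \Res_{\pi_F}\bigl(\phi(\pi_F)^{i}\, d\phi(\pi_F)\bigr) = p\,\delta_{i,-1}.
\end{equation*}
For $i\neq -1$ this is immediate since $\phi(\pi_F)^i\, d\phi(\pi_F) = \tfrac{1}{i+1}\, d(\phi(\pi_F)^{i+1})$ is exact, and exact forms have vanishing residue.

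The key step is the case $i=-1$. Here one writes $\phi(\pi_F)^{-1}d\phi(\pi_F) = d\log\phi(\pi_F) = d\log u + p\,d\pi_F/\pi_F$. The central observation is that since $u\equiv 1\pmod{p}$ and $p\neq 2$, the series $\log u = \sum_{n\geq 1}(-1)^{n-1}(u-1)^n/n$ converges in the $p$-adically complete ring $\AA_F$, so $d\log u$ is an exact differential with residue $0$; consequently $\Res_{\pi_F}(\phi(\pi_F)^{-1}d\phi(\pi_F)) = p$. Assembling the factors, the contribution at $i=j=-1$ gives $p\cdot\phi(a_{-1,-1})\cdot p = p^2\phi(\Res(\lambda\, d\pi_F\wedge dT))$, as required. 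The only substantive point is the convergence of $\log u$ in $\AA_F$; once this is noted the proof is essentially formal, and the hypotheses $u\equiv 1\pmod p$ and $p\neq 2$ are exactly what make that step go through.
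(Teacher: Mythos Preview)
Your proof is correct and follows essentially the same route as the paper: both hinge on the observation that $u\equiv 1\pmod p$ makes $\log u$ converge, so that $d\log u$ is exact and contributes no residue. The paper jumps directly to the single case $\lambda=\frac{1}{\pi_F T}$, asserting without argument that this suffices; your version is in fact more complete, since you verify explicitly that every monomial $\pi_F^iT^j$ with $(i,j)\neq(-1,-1)$ contributes zero (via exactness of $\phi(\pi_F)^i\,d\phi(\pi_F)$ for $i\neq -1$, and the forced condition $j=-1$ from the $T$-exponent), which is precisely the step the paper leaves implicit.
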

  \begin{proof}
   It is sufficient to prove the formula for $\lambda=\frac{1}{\pi_FT}$. Write $u=1+pa$ for some
   $a\in\AA_K$. We have $\phi(T)=T^p$, so 
   \begin{equation*}
    \phi(\lambda) d\phi(\pi_F)\wedge d\phi(T)=\frac{p^2}{\pi_FT}d\pi_F\wedge dT+\frac{p}{uT}du\wedge dT.
   \end{equation*}
   But $\frac{1}{uT}du\wedge dT=T^{-1}\sum d(\frac{(-1)^{r+1}p^r}{r}a^r)\wedge dT$. It is easy to see that
   the coefficient of $\pi_F^{-1}$ in $\sum d(\frac{(-1)^{r+1}p^r}{r}a^r)$ is $0$, which
   finishes the proof. 
  \end{proof}
  
  \noindent {\bf Definition.} Let $M$ be a torsion $(\phi,G)$-module. Define
  \begin{equation*}
   \tM=\Hom_{\AA_K}(M,\BB_K\slash\AA_K\otimes_{\AA_K}\Omega^2_{\AA_K}).
  \end{equation*}
  
  \begin{lem}\label{residueisom}
   The residue map induces an isomorphism
   \begin{equation*}
    \TR: \tM\rightarrow M^\vee.
   \end{equation*}
  \end{lem}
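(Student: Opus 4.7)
The plan is to define $\TR$ explicitly via the residue pairing, reduce to the base case $M = \AA_K/p^n$ by a standard d\'evissage, and then verify this base case by unwinding to a duality statement about Laurent expansions. First I would extend $\Res$ by $\ZZ_p$-linearity to a homomorphism
\begin{equation*}
\Res : \BB_K/\AA_K \otimes_{\AA_K} \Omega^2_{\AA_K} \to \QQ_p/\ZZ_p,
\end{equation*}
using that $\Res$ sends $p^{-n}\AA_K \cdot d\pi_F \wedge dT$ into $p^{-n}\ZZ_p$. Setting $\TR(f)(m) := \Res(f(m))$ for $f \in \tM$ and $m \in M$ gives, by the $\AA_K$-linearity of $f$, a well-defined $\ZZ_p$-linear map $M \to \QQ_p/\ZZ_p$ and hence an element of $M^\vee$.

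For injectivity, if $\TR(f) = 0$ then for every $m \in M$ and every $a \in \AA_K$ one has $\Res(a \cdot f(m)) = \Res(f(am)) = 0$. The non-degeneracy of the residue pairing on $\AA_K$, established in the base case below, then forces $f(m) = 0$ for all $m \in M$.

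For surjectivity, I would reduce via the structure theorem for torsion $(\phi,G)$-modules (which should give that $M$ is built, by successive extensions, out of free $\AA_K/p^n$-modules) together with additivity of $\tM$ and $M^\vee$ on direct sums and the five lemma, to the base case $M = \AA_K/p^n$. Here one has natural identifications
\begin{equation*}
\tM \cong (p^{-n}\AA_K/\AA_K) \cdot d\pi_F \wedge dT \cong (\AA_K/p^n) \cdot d\pi_F \wedge dT,
\end{equation*}
and $\TR$ sends $a \, d\pi_F \wedge dT$ to the functional $b \mapsto \Res(ab \, d\pi_F \wedge dT) \bmod p^n$. The claim thus reduces to perfectness of the residue pairing $\AA_K/p^n \times \AA_K/p^n \to \ZZ_p/p^n$, which in turn unwinds to a Laurent-coefficient statement: writing elements as Laurent series in $T$ and $\pi_F$ with the tail conditions imposed by the iterated completion $W(k)[[T]][T^{-1}]^\vee[[\pi_F]][\pi_F^{-1}]^\vee$, the pairing reads off the $(-1,-1)$ coefficient of $ab$, and its non-degeneracy is the standard duality of iterated Laurent series.

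The main obstacle, in my view, is the topological care required in the base case: $\AA_K$ is a delicate iterated completion of a two-dimensional Laurent polynomial ring, and one must match the continuous $\ZZ_p$-dual of $\AA_K/p^n$ with the Laurent-coefficient pairing in order to conclude that the residue pairing is a perfect duality. Once the topologies are correctly identified, the remainder of the argument (both the d\'evissage and the formal verification of injectivity) is routine.
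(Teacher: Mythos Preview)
Your approach matches the paper's: define $\TR$ via the residue, d\'evissage to a cyclic base case, then verify that case by a Laurent-coefficient argument. Two refinements in the paper are worth noting. First, the paper reduces all the way to length-one modules, i.e.\ to $M = \EE_K$, rather than stopping at $\AA_K/p^n$; this makes the base case a statement purely over the residue field $k((\frakT))((\bpi_F))$, so the only topology in play is the Laurent one and the $p$-adic subtlety you flag as the main obstacle simply does not arise. Second, for the five-lemma step you need exactness of $M \mapsto \tM$, which you invoke but do not justify; the paper supplies this by observing that $\BB_K/\AA_K \otimes_{\AA_K} \Omega^2_{\AA_K}$ is divisible over the principal ring $\AA_K$, hence injective, so $\Hom_{\AA_K}(-,\,\cdot\,)$ into it is exact. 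With these two adjustments your outline becomes the paper's proof.
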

  \begin{proof}
   We imitate the proof of Lemma 1.3 in~\cite{herr2}. By continuity, the residue gives a homomorphism from
   $\tM$ to $M^\vee$. Now $\Omega^2_{\AA_K}$ is a free $\AA_K$-module of rank $1$. The ring $\AA_K$ is
   principal and the $\AA_K$-module $\BB_K\slash\AA_K\otimes_{\AA_K}\Omega^2_{\AA_K}$ is divisible and
   hence injective, and the functor which associates $\tM$ to $M$ is exact. Also, the functor which to $M$
   associates $M^\vee$ is exact, so by the snake lemma we can assume without loss of generality that $M$
   is an $\AA_K$-module of length $1$ and hence a $1$-dimensional vector space over $\EE_K$. By choosing a
   basis, we can therefore assume that $M=\EE_K$. Note that we can identify $\EE_K^\vee$ with
   $\Hom_{\ZZ_p}(\EE_K,\FF_p)$ and $\widetilde{\EE_K}$ with $\Hom_{\EE_K}(\EE_K,\Omega^2_{\AA_K}
   \slash p\Omega^2_{\AA_K})$.
   We need to show that the natural map
   \begin{equation*}
    \widetilde{\EE_K}=\Hom_{\EE_K}(\EE_K,\Omega^2_{\AA_K}\slash p\Omega^2_{\AA_K})\rightarrow
    \Hom_{\ZZ_p}(\EE_K,\FF_p)
   \end{equation*}
   induced by the residue is bijective. Now if $f\in\widetilde{\EE_K}$ is non-zero, 
   then since the image of
   $f$ is an $\EE_K$-vector space of dimension $1$ and the residue map is surjective, it is clear that
   $\TR(f)$ is non-zero. Now let $a\in\Hom_{\ZZ_p}(\EE_K,\FF_p)$. For all $m,n\in\ZZ$, let
   $\alpha_{m,n}=a(T^m\bpi_F^n)$. Since $a$ is continuous, there exists $N\in\NN$ such that 
   $\alpha_{m,n}=0$ for all
   $n\geq N$ and all $m\in\ZZ$. Define 
   \begin{equation*}
    \omega=(\sum_{n\geq -N}\sum_{m\in\ZZ}\alpha_{-m,-n}T^{m-1}\pi_F^{n-1})dT\wedge d\pi.
   \end{equation*}
   Then for all $m,n\in\ZZ$, the class $\mod p$ of $\Res(T^m\pi^n\omega)$ is $\alpha_{m,n}$. It follows
   that if we define $f\in\widetilde{\EE_K}$ by $f(1)=\omega\mod p$, then $\TR(f)=a$.
  \end{proof}


 \subsection{The equivalence of categories}\label{eqcat}
 
  Denote by $\calG_K$ the absolute Galois group $\Gal(\bar{K}\slash K)$. 
  \vs
  
  \noindent {\bf Definition.} A $\ZZ_p$-representation of $\calG_K$ is a $\ZZ_p$-module $V$ of finite type
  equipped with a continuous linear action of $\calG_K$. If $V$ is annihilated by a power of $p$, then it is
  called a $p$-torsion module.
  \vs
  
  \begin{thm}\label{eqofcat}
   The functor $V\rightarrow D(V)=(V\otimes_{\ZZ_p}\AA)^{H_K}$ gives an equivalence of categories 
   \begin{equation*}
    (\text{$\ZZ_p$-representations of $\calG_K$})\rightarrow (\text{\'etale $(\phi,G)$-modules over $\AA_K$}),
   \end{equation*} 
   and an essential inverse is given by 
   $D\rightarrow (\AA\otimes_{\AA_K}D)^{\phi=1}$.
  \end{thm}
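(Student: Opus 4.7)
The plan is to imitate Fontaine's original proof of the analogous equivalence in the one-dimensional case, with the higher field of norms correspondence from~\cite{scholl} supplying the crucial Galois-theoretic input that replaces the classical theory of Fontaine--Wintenberger.

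First I would reduce to the $p$-torsion case. Since every $\ZZ_p$-representation $V$ of finite type is the inverse limit of its reductions $V/p^n V$, and both $\AA$ and $\AA_K$ are $p$-adically complete and separated, the functor $V\mapsto D(V)$ and its putative quasi-inverse commute with inverse limits along $\{V/p^n V\}$. A snake lemma dévissage applied to the short exact sequences $0\to p V\to V\to V/pV\to 0$ then reduces full faithfulness, essential surjectivity, and étaleness to the case where $V$ is killed by $p$, i.e., $V$ is an $\FF_p$-representation of $\calG_K$.

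In the $p$-torsion case, $V\otimes_{\ZZ_p}\AA$ depends only on the quotient $\AA/p\AA=\EE$, and similarly $D(V)$ is a finite-dimensional $\EE_K$-vector space. Since by Scholl's theorem $\Gal(\EE\slash\EfK)\cong \Gal(\bar{K}\slash K_\infty)=H_K$, the problem becomes the classical characteristic-$p$ equivalence between $\FF_p$-representations of $H_K$ and étale $\phi$-modules over $\EE_K$, via $V\mapsto (V\otimes_{\FF_p}\EE)^{H_K}$. This is proved exactly as in Fontaine's original argument: Hilbert 90 applied to $\GL_n(\EE)$ together with the fact that $\EE$ is a separable closure of $\EE_K$ yields that $D(V)$ has $\EE_K$-dimension equal to $\dim_{\FF_p}V$, that $\phi$ is étale on it, and that the comparison map $\EE\otimes_{\EE_K}D(V)\to V\otimes_{\FF_p}\EE$ is an isomorphism. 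The residual $G=\Gal(K_\infty\slash K)$-action on $D(V)$ is then inherited from the $\calG_K$-action on $V$ and the $G$-action on $\AA_K$, and satisfies the semi-linear commutation relation~\eqref{conjugation} since the relation already holds on $\AA_K$.

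The main obstacle is verifying the characteristic-$p$ equivalence in the higher-dimensional setting: one needs $\EE_K\subset\EE$ to behave sufficiently like a separable closure for the Hilbert 90 and descent arguments to go through, and one needs the étaleness of $\phi$ on $D(V)$ together with the surjectivity of the comparison map. In dimension one these rest on the classical fact that $\EE_K$ has a nice structure as a complete discretely valued field with perfect residue field; in the two-dimensional case the corresponding structural facts about $\EE_K\cong k((\fT))((\bpi_F))$ and $\EE$ must be extracted from~\cite{scholl} and~\cite{andreatta}. Once that is done, the incorporation of the continuous $G$-action and the verification that the two functors are mutually quasi-inverse are essentially formal, following~\cite{herr2} verbatim.
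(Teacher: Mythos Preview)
Your proposal is a reasonable sketch of the standard Fontaine-style argument, and the key ingredients you identify (reduction to $p$-torsion by d\'evissage, the higher field of norms isomorphism $\Gal(\EE/\EE_K)\cong H_K$ from~\cite{scholl}, and the characteristic-$p$ equivalence via Hilbert 90) are indeed what drive the proof. Note, however, that the paper does not give its own proof of this theorem at all: its entire proof reads ``See~\cite{andreatta} or~\cite{scholl}.'' So there is nothing to compare against in the paper itself; what you have outlined is essentially the content of those references.

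One small caveat: your claim that ``$\EE$ is a separable closure of $\EE_K$'' and that Hilbert 90 for $\GL_n(\EE)$ applies directly deserves more care in the higher-dimensional setting, since $\EE_K$ is a two-dimensional local field and the relevant cohomological vanishing is less standard than in the classical one-dimensional case. This is precisely the point you flag as ``the main obstacle,'' and you are right that it is where the real work in~\cite{andreatta} and~\cite{scholl} lies; your sketch correctly defers to them for it.
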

  \begin{proof}
   See~\cite{andreatta} or~\cite{scholl}.
  \end{proof}


 \subsection{The module $\ZZ_p(2)$}
  
  For an element $a\in\EE_K$, denote by $[a]$ its Teichm\"uller representative.
  Recall that if $K$ contains a primitive $p$th root of unity, then $[\epsilon]=\pi+1$ is an element of
  $\AA_K$. To simplify notation, let $\Omega(K)=\Omega^2_{\AA_K}$. 
  
  \begin{lem}\label{twistisom}
   Let $K'=K(\mu_p)$, and define an $\AA$-linear map
   \begin{align*}
    \rho_{K'}:\AA\otimes_{\ZZ_p}\ZZ_p(1)&\rightarrow \AA\otimes_{\AA_{K'}}\Omega(K'),\\
    \lambda\otimes\epsilon^2&\rightarrow \lambda\otimes d\log [\epsilon]\wedge d\log T.
   \end{align*}
   Then $\rho_{K'}$ is an isomorphism of $\AA$-modules which does not depend on the choice of the generator
   $\epsilon$ of $\ZZ_p(1)$.
  \end{lem}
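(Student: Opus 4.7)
The plan is to verify three claims in sequence: that $\rho_{K'}$ is a well-defined $\AA$-linear map, that it is bijective, and that it is independent of the choice of generator $\epsilon$.

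Well-definedness is essentially formal. The reduction of $\epsilon$ is a unit of $\EE$, so its Teichm\"uller lift $[\epsilon]$ is a unit of $\AA$, and $T\in\AA^\times$ by construction. Consequently $d\log[\epsilon]=d[\epsilon]/[\epsilon]$ and $d\log T=dT/T$ are genuine elements of $\Omega^1_\AA$, and their wedge lies in $\AA\otimes_{\AA_{K'}}\Omega(K')$. Extending $\AA$-linearly from the generator $\epsilon^2$ thus defines the map.

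For bijectivity, I would exploit that both sides are free $\AA$-modules of rank one. By Lemma~\ref{structuredifferentials} and Corollary~\ref{basechange}, the target $\AA\otimes_{\AA_{K'}}\Omega(K')$ is generated by $d\pi_F\wedge dT$, so it suffices to show that $d\log[\epsilon]\wedge d\log T$ is also an $\AA$-basis. Absorbing the units $[\epsilon]^{-1}$ and $T^{-1}$, the task reduces to showing that the coefficient $\alpha\in\AA$ defined by $d[\epsilon]=\alpha\,d\pi_F$ lies in $\AA^\times$. The key observation is that both $[\epsilon]-1$ and $\pi_F$ are lifts to $\AA$ of the uniformizer $\bpi_F=\epsilon-1$ of the field of norms $\EE_F$; any two such lifts differ by an element of $p\AA$, so that $\alpha\equiv 1\pmod{p}$, and $p$-adic completeness of $\AA$ then forces $\alpha$ to be a unit. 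This shows that $\rho_{K'}$ sends a basis to a basis, hence is an isomorphism.

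For independence of $\epsilon$, substituting $\epsilon$ by $\epsilon^c$ with $c\in\ZZ_p^\times$ sends $[\epsilon]$ to $[\epsilon]^c$ (the Teichm\"uller construction is multiplicative and extends $p$-adically continuously), yielding $d\log[\epsilon^c]=c\,d\log[\epsilon]$; this matches the rescaling of the distinguished generator on the source side by the corresponding power of $c$, so $\rho_{K'}$ is intrinsic. The main obstacle I foresee is the unit-comparison $\alpha\equiv 1\pmod{p}$: although elementary, it is the one step that requires moving between the Teichm\"uller and the formal-series descriptions of $\AA$, and the argument depends on having $\bpi_F$ and $\epsilon-1$ coincide as uniformizers of $\EE_F$. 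All remaining steps are routine manipulations with free rank-one $\AA$-modules.
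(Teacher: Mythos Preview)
The paper gives no proof beyond ``Imitate the proof of Lemma~3.6 in~\cite{herr2}''; your proposal is precisely such an imitation---reduce both sides to free $\AA$-modules of rank one and verify that a generator maps to a generator by comparing $d[\epsilon]$ with $d\pi_F$ modulo~$p$---so the approach is the same, and you are supplying the details the paper omits.

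One step should be tightened. You assert that $\bpi_F=\epsilon-1$, so that $[\epsilon]-1$ and $\pi_F$ lift the same element and hence $\alpha\equiv 1\pmod p$. But the paper fixes $\bpi_F$ only as \emph{some} uniformizer of $\EE_F$ and defines $\bpi=\epsilon-1$ separately; in general $\epsilon-1$ is a uniformizer of $\EE_{\QQ_p}$ and need not be one for $\EE_F$ when $F/\QQ_p$ is ramified. What you actually need, and what Herr's argument provides, is the weaker fact that the reduction $d\epsilon$ is nonzero in $\Omega^1_{\EE_{K'}}$ modulo $\EE_{K'}\,d\frakT$ (equivalently, $\epsilon$ is not a $p$-th power in the relevant field of norms). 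This still forces $\alpha\in\AA^\times$, so your conclusion stands.

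Your independence check is phrased loosely (``the corresponding power of $c$''), which is perhaps just as well: the lemma as printed has source $\ZZ_p(1)$ but generator $\epsilon^2$, and if one reads the source as $\ZZ_p(2)$---as Proposition~\ref{isomtotwist} requires---then under $\epsilon\mapsto\epsilon^c$ the source scales by $c^2$ while $d\log[\epsilon]\wedge d\log T$ scales by $c$. This is an inconsistency in the statement rather than a flaw in your argument.
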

  \begin{proof}
   Imitate the proof of Lemma 3.6 in~\cite{herr2}.
  \end{proof}

  Note that we can give $\Omega(K)$ the structure of a $(\phi,G)$-module by defining
  \begin{align*}
   \phi_{\Omega(K)}(x dy\wedge dz)&= \frac{1}{p^2}\phi(x) d\phi(y)\wedge \phi(z),\\
      g(x dy\wedge dz)&= \chi(g)g(x) dg(y)\wedge dg(z)
  \end{align*}
  for $g\in G$. 
  
  \begin{prop}\label{isomtotwist}
   With this stucture as a $(\phi,G)$-module, $\Omega(K)$ is isomorphic to $D(\ZZ_p(2))$.
  \end{prop}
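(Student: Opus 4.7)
The plan is to apply Theorem~\ref{eqofcat}: since the functor $D$ is an equivalence of categories with essential inverse $V(D) = (\AA \otimes_{\AA_K} D)^{\phi=1}$, it suffices to produce a $(\phi, \calG_K)$-equivariant $\AA$-linear isomorphism
\[
  \rho: \AA \otimes_{\ZZ_p} \ZZ_p(2) \;\xrightarrow{\sim}\; \AA \otimes_{\AA_K} \Omega(K),
  \qquad \lambda \otimes \epsilon^{\otimes 2} \;\longmapsto\; \lambda \otimes d\log[\epsilon] \wedge d\log T,
\]
and then take $H_K$-invariants: the left-hand side becomes $\AA_K \otimes_{\ZZ_p} \ZZ_p(2) = D(\ZZ_p(2))$, while the right-hand side is $\Omega(K)$. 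The underlying $\AA$-linear bijection is already supplied by Lemma~\ref{twistisom} (note that $K_\infty = K(\mu_p)_\infty$, so $\AA_K = \AA_{K(\mu_p)}$ and $\Omega(K) = \Omega(K(\mu_p))$, allowing the lemma to be applied directly over $K$). What remains is to verify compatibility with Frobenius and with the $\calG_K$-action.

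For the Frobenius compatibility, the relations $\phi([\epsilon]) = [\epsilon]^p$ and $\phi(T) = T^p$ plugged into the defining formula $\phi_{\Omega(K)}(x\,dy\wedge dz) = p^{-2}\phi(x)\,d\phi(y)\wedge d\phi(z)$ give $\phi_{\Omega(K)}(d\log[\epsilon] \wedge d\log T) = d\log[\epsilon] \wedge d\log T$, which matches the trivial Frobenius on $\epsilon^{\otimes 2}$. For the Galois equivariance, the key inputs are the identity $g([\epsilon]) = [\epsilon]^{\chi(g)}$ and a Kummer-type relation $g(T) = [\epsilon]^{N(g)}T$ with $N: \calG_K \to \ZZ_p$ continuous (extending the explicit formula $\gamma_2(T) = (1+\pi)^N T$). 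These give $d\log g([\epsilon]) = \chi(g)\, d\log[\epsilon]$ and $d\log g(T) = N(g)\, d\log[\epsilon] + d\log T$, so wedging with $d\log[\epsilon]$ kills the unwanted $d\log[\epsilon]$ term in $d\log g(T)$. Combined with the explicit $\chi(g)$ factor built into the defined $G$-action $g(x\,dy\wedge dz) = \chi(g) g(x)\,dg(y)\wedge dg(z)$, this yields
\[
  g\bigl(d\log[\epsilon] \wedge d\log T\bigr) = \chi(g)^2\, d\log[\epsilon] \wedge d\log T,
\]
precisely matching the $\calG_K$-action on $\epsilon^{\otimes 2} \in \ZZ_p(2)$.

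The main obstacle I expect is the careful accounting of twists: one must check that the explicit $\chi(g)$ prefactor chosen in the definition of the $G$-action on $\Omega(K)$ is precisely the correction needed so that the $\chi(g)$-twist arising from $d\log g([\epsilon])$ combines with it to give $\chi(g)^2$ (and not $\chi(g)$ or $\chi(g)^3$), and that the Kummer cross-term $N(g)\,d\log[\epsilon]$ in $d\log g(T)$ indeed vanishes under the wedge against $d\log[\epsilon]$. Once equivariance is verified on the generators $\gamma_1, \gamma_2$, it extends to all of $\calG_K$ by continuity and $\ZZ_p$-linearity; taking $H_K$-invariants in $\rho$ then produces the desired isomorphism $\Omega(K) \cong D(\ZZ_p(2))$ of $(\phi, G)$-modules.
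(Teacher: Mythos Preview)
Your approach is essentially the same as the paper's: both construct the isomorphism by pulling back the $\AA$-linear map $\rho_{K'}$ of Lemma~\ref{twistisom} to $H_K$-invariants and then checking $\phi$- and $\calG_K$-equivariance. The only difference is presentational: the paper passes through the base-change injection $\Omega(K)\hookrightarrow \Omega(K')$ from Corollary~\ref{basechange} and then verifies by explicit calculation that $\Omega(K)\to (\AA\otimes_{\AA_{K'}}\Omega(K'))^{H_K}$ is surjective, whereas you short-circuit this by observing that $K_\infty = K(\mu_p)_\infty$ forces $H_K = H_{K'}$ and hence $\AA_K = \AA_{K'}$, so the base-change map is the identity and Lemma~\ref{twistisom} applies directly over $K$. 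Your explicit verification of the $\chi(g)^2$-twist (via $g([\epsilon])=[\epsilon]^{\chi(g)}$, $g(T)=[\epsilon]^{N(g)}T$, and the vanishing of $d\log[\epsilon]\wedge d\log[\epsilon]$) is exactly what the paper abbreviates as ``explicit calculation'' and ``it is easy to see.''
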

  \begin{proof}
   Let $K'=K(\mu_p)$. By Corollary~\ref{basechange} the natural map $\Omega^2_{\AA_K}\rightarrow
   \Omega^2_{\AA_{K'}}$ is injective. Composing it with the natural $\Gal(\bar{K}\slash K)$-equivariant injection
   $\Omega^2_{\AA_{K'}}\rightarrow \AA\otimes_{\AA_{K'}}\Omega^2_{\AA_{K'}}$ gives an $\AA_K$-linear
   $\calG_K$-equivariant map $\Omega^2_{\AA_K}\rightarrow (\AA_K\otimes_{\AA_{K'}}\Omega^2_{\AA_{K'}})^{H_K}$.
   Explicit calculation shows that this map is also surjective. Composing it with the restriction of 
   $\rho_{K'}^{-1}$ to the points fixed by $H_K$ gives an isomorphism of $\AA_K$-modules between
   $\Omega^2_{\AA_K}$ and $D_K(\ZZ_p(2))$ which commutes with the action of $\calG_K$. It is easy to see that
   it also commutes with the action of $\phi$, which finishes the proof. 
  \end{proof}


 \subsection{Calculation of the Galois cohomology}\label{galcohom}
 
  Let $V$ be a $\ZZ_p$-representation of $\calG_K$, and denote by $D$ the corresponding $(\phi,G)$-module
  $D_K(V)$. Define the following complex:-
  
  \begin{equation}\label{complexphi}
   \calC_{\phi,\gamma_1,\gamma_2}(D): 0  \rTo D \rTo^{f_1} D^{\oplus 3} \rTo^{f_2}   
   D^{\oplus 3}\rTo^{f_3} D \rTo 0, 
  \end{equation}
  where the maps $f_i$ are defined as follows:-
  \begin{align*}
   f_1: x      \rightarrow & [ (\phi-1)x,(\gamma_1-1)x,(\gamma_2-1)x],\\
   f_2: (x,y,z)\rightarrow & [ (\phi-1)y-(\gamma_1-1)x,\\
                           & (\phi-1)z-(\gamma_2-1)x,\\
			   & (\gamma_2-1)y-(\gamma_1\frac{\gamma_2^{\frac{1}{a}}-1}{\gamma_2-1}-1)z ],\\
   f_3: (x,y,z)\rightarrow & (\gamma_2-1)x-(\gamma_1\frac{\gamma_2^{\frac{1}{a}}-1}{\gamma_2-1}-1)y
   -(\phi-1)z.
  \end{align*}
  
  Similarly, define the complex
  \begin{equation}\label{complexpsiprime}
   \calC_{\psi,\gamma_1,\gamma_2}(D): 0  \rTo D \rTo^{g_1} D^{\oplus 3} \rTo^{g_2}   
   D^{\oplus 3}\rTo^{g_3} D \rTo 0, 
  \end{equation}
  where the maps $g_i$ are defined as follows:-
  \begin{align*}
   g_1: x      \rightarrow & [ (\psi-1)x,(\gamma_1-1)x,
   (\gamma_2-1)x],\\
   g_2: (x,y,z)\rightarrow & [ (\psi-1)y-(\gamma_1-1)x,\\
                           & (\psi-1)z-(\gamma_2-1)x,\\
			   & (\gamma_2-1)y-(\gamma_1\frac{\gamma_2^{\frac{1}{a}}-1}{\gamma_2-1}-1)z ],\\
   g_3: (x,y,z)\rightarrow & (\gamma_2-1)x-(\gamma_1\frac{\gamma_2^{\frac{1}{a}}-1}{\gamma_2-1}-1)y-(\psi-1)z.
  \end{align*}

  \noindent {\bf Definition.} Denote by 
  $H^i_{\phi,\gamma_1,\gamma_2}(D)$ (resp. $H^i_{\psi,\gamma_1,\gamma_2}(D))$ the cohomology groups 
  of the complex $\calC_{\phi,\gamma_1,\gamma_2}(D)$ (resp. $\calC_{\psi,\gamma_1,\gamma_2}(D)$).
  
  \begin{prop}\label{Galcohomology}
   Let $V=\ZZ_p(1)$ or $\mun$, and let $D$ be its $(\phi,G)$-module.
   Then for all $0\leq i\leq 3$, we have isomorphisms
   \begin{equation*}
    H^i(\calG_K,V)\cong H^i_{\phi,\gamma_1,\gamma_2}(D)\cong H^i_{\psi,\gamma_1,\gamma_2}(D).
   \end{equation*}
  \end{prop}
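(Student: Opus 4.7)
The plan is to follow Herr's strategy from~\cite{herr2}, realising $\calC_{\phi,\gamma_1,\gamma_2}(D)$ as the total complex of a bicomplex whose two factors compute, respectively, the continuous cohomology of $H_K=\Gal(\bar K/K_\infty)$ and of $\Gamma=\Gamma_1\rtimes\Gamma_2$. The overall input is the Hochschild--Serre spectral sequence attached to $1\to H_K\to\calG_K\to\Gamma\to 1$,
\begin{equation*}
 E_2^{p,q}=H^p\big(\Gamma,H^q(H_K,V)\big)\Rightarrow H^{p+q}(\calG_K,V).
\end{equation*}

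First I would compute the $H_K$-cohomology. The equivalence of categories in Theorem~\ref{eqofcat}, applied to $V|_{H_K}$, identifies $(V\otimes_{\ZZ_p}\AA)^{H_K}$ with $D$, while under the field-of-norms correspondence $H_K$ is the absolute Galois group of $\EfK$, a local field of characteristic $p$. Artin--Schreier theory for $\EfK$ (together with the fact that its absolute Galois group has $p$-cohomological dimension $1$) identifies $H^\ast(H_K,V)$ with the cohomology of the two-term complex $0\to D\xrightarrow{\phi-1}D\to 0$. Next, since $\Gamma_1,\Gamma_2\cong\ZZ_p$ are procyclic, the continuous cohomology of $\Gamma$ with coefficients in a topological $\Gamma$-module $M$ is computed by a length-three Koszul-type complex $0\to M\to M^{\oplus 2}\to M\to 0$ whose differentials involve $\gamma_1-1$, $\gamma_2-1$ and the twisted operator $\gamma_1\frac{\gamma_2^{1/a}-1}{\gamma_2-1}-1$; the twist is dictated by~(\ref{reverseconjugation}), which reads $\gamma_1\gamma_2^{1/a}=\gamma_2\gamma_1$, and the symbol $\frac{\gamma_2^{1/a}-1}{\gamma_2-1}\in\ZZ_p[[\Gamma_2]]$ makes sense because $\tfrac{1}{a}\in\ZZ_p$. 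Tensoring these two complexes and forming the total complex yields a bicomplex whose total differential matches $f_1,f_2,f_3$; because $H_K$ and $\Gamma$ both have finite cohomological dimension, the resulting spectral sequence degenerates immediately and produces the first isomorphism.

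For the comparison with $\calC_{\psi,\gamma_1,\gamma_2}(D)$, I would adapt Section~II.3 of~\cite{herr2}: the identities $\psi\phi=\id$ and $\phi\psi(x)=\tfrac{1}{p^2}\Tr_{\AA_K/\phi(\AA_K)}(x)$ imply that $\phi-1$ and $-\phi(\psi-1)$ have canonically isomorphic kernels and cokernels on $D$, giving a row-by-row quasi-isomorphism of the two bicomplexes and hence the second isomorphism on total cohomology.

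The hardest part, as in Herr's original paper, is writing down an explicit projective resolution of the trivial module $\ZZ_p$ over the Iwasawa algebra $\ZZ_p[[\Gamma]]$ adapted to the semidirect-product presentation $\gamma_1\gamma_2=\gamma_2^a\gamma_1$, and checking that applying $\Hom_{\ZZ_p[[\Gamma]]}(-,M)$ produces precisely the operator $\gamma_1\tfrac{\gamma_2^{1/a}-1}{\gamma_2-1}-1$ rather than some cohomologically equivalent but structurally different differential. Once this resolution is in place and its differentials are checked against $f_2$ and $f_3$, the remaining identification of the total complex with $\calC_{\phi,\gamma_1,\gamma_2}(D)$ and the passage to $\psi$ are bicomplex manipulations of the sort carried out in~\cite{herr2}.
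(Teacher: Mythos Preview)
Your treatment of the first isomorphism is broadly sound (and more detailed than the paper, which simply cites Scholl~\cite{scholl} and Andreatta~\cite{andreatta} for $H^i(\calG_K,V)\cong H^i_{\phi,\gamma_1,\gamma_2}(D)$). The real problem is in your argument for the second isomorphism.

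The claim that ``$\phi-1$ and $-\phi(\psi-1)$ have canonically isomorphic kernels and cokernels on $D$'' is false, and hence the ``row-by-row quasi-isomorphism'' does not exist. Since $\phi$ is injective, $\ker(-\phi(\psi-1))=D^{\psi=1}$, and already for $D=\AA_K/p^n$ one has $D^{\phi=1}\subsetneq D^{\psi=1}$. The identities $\psi\phi=\id$ and $\phi\psi=\tfrac{1}{p^2}\Tr$ alone cannot force the two-term complexes $[D\xrightarrow{\phi-1}D]$ and $[D\xrightarrow{\psi-1}D]$ to be quasi-isomorphic. What they do give is a map of bicomplexes (identity in the $\Gamma$-direction, $-\psi$ in the $\phi$-direction), and the cone of this map is the $\Gamma$-complex of $D^{\psi=0}$. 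The genuine content of the $\phi$/$\psi$ comparison is the \emph{acyclicity of that $\Gamma$-complex}, which is not formal.

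This is exactly where the paper's restriction to $V=\ZZ_p(1)$ or $\mu_{p^n}$ enters, and it is the step you are missing. The paper constructs the maps $\iota_i$ by hand: an element of $D^{\psi=0}$ is expanded as $\sum_i f_i(\pi_F)T^i$; on the $T^0$-part one invokes Proposition~I.5.1 of~\cite{cherbonniercolmez2} (invertibility of $\gamma_1-1$ on $D_F(V)^{\psi=0}$), while on the $T^i$-part with $i\neq 0$ one uses that $(\gamma_2-1)T^i=\alpha_i T^i$ with $\alpha_i\in\AA_F^\times$. This two-variable splitting, reducing to the one-dimensional Cherbonnier--Colmez input, is the missing idea in your proposal; without it, the passage from $\phi$ to $\psi$ does not go through.
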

  
  \noindent {\it Remark.} This result can certainly also be shown to be true for a general $\ZZ_p$-representation $V$ of
  $\calG_K$. However, in the general case the argument is technically much more complicated, and since
  our main interest is the construction of the Hilbert pairing, we restrict ourselves to the case above.
  We will prove the general case in~\cite{zerbesexp}.
  
  \begin{proof}
   Scholl~\cite{scholl} and Andreatta~\cite{andreatta} have shown that we have isomorphisms
   $H^i(\calG_K,V)\cong H^i_{\phi,\gamma_1,\gamma_2}(D)$. It is therefore sufficient to explicitely give
   isomorphisms $\iota_i:H^i_{\psi,\gamma_1,\gamma_2}(D)\cong H^i_{\phi,\gamma_1,\gamma_2}(D)$.
   
   \item $i=3$:- Let $x\in D$. Since $\psi$ is surjective, we can choose $u\in D$ such that $\psi(u)=x$.
   Define $\iota_3(x)=u$. This is well-defined:- If $u'$ also satisfies $\psi(u')=x$, then $a=u-u'\in
   D^{\psi=0}$. Write $u=\sum_{i\in\ZZ}f_i(\pi_F)T^i$. Then in particular $\psi(f_0(\pi))=0$.
   In Proposition I.5.1 in~\cite{cherbonniercolmez2}, it is shown that $\gamma_1-1$ 
   is invertible on $D_F(V)^{\psi=0}$. Let $h_0=(\gamma_1-1)^{-1}(f_0)$. For $j\neq 0$, let   
   $\alpha_j=\frac{(\gamma_2-1)T^j}{T^j}\in\AA_F^*$. Let $v=\sum_{j\neq 0}\alpha_jf_j(\pi)T^j$.
   Then $u=(\gamma_1-1)h_0+(\gamma_2-1)v$ and hence is zero in 
   $H^3_{\phi_D,\gamma_1,\gamma_2}(D)$.
   
   \item $i=2$:- Let $[x,y,z]\in D^{\oplus 3}$ satisfy $(\gamma_2-1)x-(\gamma_1-1)y-(\psi-1)z=0$. 
   Denote $\iota_2([x,y,z])$ by $[u,v,w]$. 
   Write $(\phi-1)w=\sum_{i\in\ZZ}a_if_i(\pi)T^i$. Note that
   $f_0(\pi_F)\in D_f(\ZZ_p(1))^{\psi=0}$, so by Proposition I.5.1 
   in~\cite{cherbonniercolmez2}, there exists $h_0\in
   D_F(\ZZ_p(1))$ such that $(\gamma_1-1)h_0
   =f_0$. Also, when $i\neq 0$, then $\alpha_i=\frac{(\gamma_2-1)T^i}{T^i}$ is invertible in $\AA_F$. Define
   $u=x+h_0(\pi)$, $v=y+\sum_{j\neq 0}f_j(\pi)\alpha_j^{-1}T^j$ and $w=z$.
   
   \item $i=1$:- Let $y,z\in D$ satisfy
   \begin{equation}\label{H1condition}  
    (\gamma_2-1)y=(\gamma_1\frac{\gamma_2^{\frac{1}{a}}-1}{\gamma_2-1}-1)z
   \end{equation} 
   and $(\psi-1)y=(\psi-1)z=0$. 
   We need to show that there exists $x\in D$ such that
   $(\phi-1)y=(\gamma_1-1)x$ and $(\phi-1)z=(\gamma_2-1)x$. How do we construct this $x$? Write
   $(\phi-1)z=\sum_{i\in\ZZ} f_i(\pi_F)T^i$. By Proposition I.5.1 
   in~\cite{cherbonniercolmez2}, $(\gamma_1-1)^{-1}f_0$ is
   well-defined. When $i\neq 0$, then $\alpha_i=\frac{(\gamma_2-1)T^i}{T^i}\in\AA_F^\times$. Define
   \begin{equation*}
    x=(\gamma_1-1)^{-1}f_0(\pi_F)+\sum_{i\in\ZZ}\alpha_i^{-1}f_i(\pi_F)T^i.
   \end{equation*}
   Using~\eqref{H1condition} it is not difficult to see that $x$ has the required properties. 
   
   \item $i=0$:- $\iota_0=\id$. When $x\in D$ satisfies $\psi x=x$, 
   $\gamma_1\frac{\gamma_2^{\frac{1}{n}}-1}{\gamma_2-1}x=x$ and $\gamma_2x=x$, then it is easy to see
   (using again the result from~\cite{cherbonniercolmez2} mentioned above) that $\phi x=x$. 
   \vs
   
   It is not difficult to see that the above maps are indeed isomorphisms.
  \end{proof}
  
  \noindent {\bf Remark.} If $\gamma_1'$ and $\gamma_2'$ is a different pair of topological generators of
  $\Gamma_1$ and $\Gamma_2$, then the complexes $\calC_{\phi,\gamma_1,\gamma_2}$ and 
  $\calC_{\phi,\gamma_1',\gamma_2'}$ are quasi-isomorphic. More generally, we can replace $\gamma_1$ and
  $\gamma_2$ by $\omega_1\gamma_1$ and $\omega_2\gamma_2$ for any $\omega_1,\omega_2\in\Lambda(G)^\times$.

 
 \subsection{Construction of the pairing}

  \noindent {\bf Definition.} The Pontryagin dual $M^\vee$ of an \'etale $(\phi,G)$-torsion module $M$ 
  is defined as the continuous
  homomorphisms
  \begin{equation*}
   M^\vee=\Hom_{\ZZ_p}(M,\QQ_p\slash\ZZ_p).
  \end{equation*}
  
  As shown in Lemma~\ref{residueisom}, the map $\TR$ induces an isomorphism 
  $\tilde{M}\rightarrow M^\vee$. We can therefore give $M^\vee$ the structure of a $(\phi,G)$-module.
  Denote the operation of Frobenius on it by $\phi_{M^\vee}$. We quote the following result
  from~\cite{herr2}:-

  \begin{prop}\label{homcohomduality}
   Let $\calC=(M^i,d^i:M^i\rightarrow M^{i+1})$ be a cochain comlex of abelian groups which are compact and
   locally separated (with $M^i$ in degree $i$). Suppose that the $d^i$ are strict homomorphisms with
   closed images. Then $\calC^\vee=(N_i:=(M^i)^\vee,d_i:=^td^{i-1}:N_i\rightarrow N_{i-1})$ is a chain complex
   of abelian groups which are compact and locally separated (with $N_i$ in degree $i$). The $d_i$ are
   strict homomorphisms with closed image, and for all $i$, we have natural isomorphisms
   \begin{equation*}
    \alpha_i:H_i(\calC^\vee)\cong (H^i(\calC))^\vee.
   \end{equation*}
  \end{prop}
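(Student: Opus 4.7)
The plan is to use Pontryagin duality as an exact contravariant functor on the category of locally compact Hausdorff abelian groups, under which compact and discrete objects are interchanged and closed subgroups correspond to Hausdorff quotients. Since each $M^i$ is compact and locally separated, each $N_i=(M^i)^\vee$ is a discrete abelian group, and the relation $d^i\circ d^{i-1}=0$ immediately transposes to give $d_{i}\circ d_{i+1}=0$, so that $\calC^\vee$ is indeed a chain complex.

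The key reduction, which I would establish first, is the following statement: if $f:A\to B$ is a strict continuous homomorphism of locally compact Hausdorff abelian groups with closed image, then the transpose $f^\vee:B^\vee\to A^\vee$ is again strict with closed image, and there are canonical identifications
\begin{equation*}
 \ker(f^\vee)\cong(\coker f)^\vee,\qquad \image(f^\vee)\cong(A\slash\ker f)^\vee.
\end{equation*}
This follows by factoring $f$ as a strict surjection $A\twoheadrightarrow A\slash\ker f$ followed by a topological isomorphism of $A\slash\ker f$ onto the closed subgroup $\image f\subseteq B$, and then dualising each factor, using that Pontryagin duality swaps strict surjections with closed inclusions.

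Applying this to each $d^i$ immediately yields the structural claims of the proposition. For the cohomology isomorphism, the identifications give
\begin{equation*}
 \ker d_i\cong(M^i\slash\image d^{i-1})^\vee,\qquad \image d_{i+1}\cong(M^i\slash\ker d^i)^\vee,
\end{equation*}
and the inclusion $\image d_{i+1}\subseteq\ker d_i$ corresponds under these identifications to the Pontryagin dual of the natural surjection $M^i\slash\image d^{i-1}\twoheadrightarrow M^i\slash\ker d^i$. Dualising the short exact sequence
\begin{equation*}
 0\to\ker d^i\slash\image d^{i-1}\to M^i\slash\image d^{i-1}\to M^i\slash\ker d^i\to 0
\end{equation*}
and invoking exactness of Pontryagin duality then produces the desired natural isomorphism $\alpha_i:H_i(\calC^\vee)\cong H^i(\calC)^\vee$.

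The main obstacle is topological rather than algebraic: one has to verify carefully that every subgroup appearing is closed, that the quotients inherit the topology under which their Pontryagin duals behave as described, and that strictness genuinely propagates through the factorisation of $f$ above. This is precisely where the hypotheses that the $M^i$ are compact and locally separated and that each $d^i$ is strict with closed image are actually used. Once the strict-morphism lemma is set up in this generality, the cohomological conclusion is a formal consequence, and the entire argument is carried out in~\cite{herr2}.
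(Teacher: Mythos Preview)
The paper does not prove this proposition at all: it is introduced with the phrase ``We quote the following result from~\cite{herr2}'', and no argument is given. Your outline is a correct sketch of the standard Pontryagin-duality argument, and since you too defer to~\cite{herr2} at the end, your treatment is entirely compatible with the paper's --- indeed, it supplies more detail than the paper does.

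One small point worth flagging: you assert that each $N_i=(M^i)^\vee$ is discrete, whereas the proposition as stated claims the $N_i$ are again ``compact and locally separated''. In the intended application the $M^i$ are profinite $p$-torsion modules and their duals are discrete $p$-torsion groups, so your description is the natural one; the discrepancy is most likely an imprecision in the transcription of Herr's statement rather than an error in your reasoning.
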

  
  In order to be able to apply Proposition~\ref{homcohomduality} to the complex 
  $\calC_{\phi,\gamma_1,\gamma_2}(M)$, we need the following result:-
  
  \begin{lem}
   If $M$ is an \'etale $(\phi,G)$-torsion module over $\AA_K$, then the image of $\phi-1$ contains a
   compact neighbourhood of $0$ on which $\phi-1$ induces a bijection and hence a homeomorphism by
   compactness.
  \end{lem}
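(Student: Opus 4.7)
The plan is to produce, for each torsion \'etale $(\phi,G)$-module $M$, a compact open sub-$\ZZ_{p}$-module $N\subset M$ which is $\phi$-stable and on which $\phi$ acts topologically nilpotently. On such an $N$ the series $\sum_{k\ge 0}\phi^{k}$ converges to a continuous inverse of $1-\phi$; consequently $\phi-1$ is a homeomorphism of $N$ onto itself, and its image in particular contains the compact open neighborhood $N$ of $0$.

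First I would reduce to the case $pM=0$ by d\'evissage along $0\to pM\to M\to M/pM\to 0$. Given compact open $\phi$-stable submodules $N_{1}\subset pM$ and $\bar N_{2}\subset M/pM$ on which $\phi-1$ is bijective (known by induction on the $p$-exponent of $M$), any compact open lift $N_{2}\subset M$ of $\bar N_{2}$ together with $N_{1}$ yields a candidate $N=N_{1}+N_{2}$; a snake-lemma chase, after possibly shrinking $N_{1}$ to absorb the error $(\phi-1)(N_{2})\cap pM$, shows that $\phi-1$ is bijective on $N$.

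For $M$ killed by $p$, the \'etale hypothesis makes $M$ a free $\EfK$-module of finite rank $d$, and I would fix a basis $e_{1},\dots,e_{d}$ in which the matrix of $\phi$ is some $A\in GL_{d}(\EfK)$. Using $\phi(\frakT)=\frakT^{p}$ and $\phi(\bpi_F)=\bpi_F^{p}$ (valid modulo $p$), choose integers $s,t\ge 1$ large enough that all entries of $A$ and $A^{-1}$ lie in $\bpi_F^{-s}\frakT^{-t}\cdot k[[\frakT]][[\bpi_F]]$, and set
\[
N=\sum_{i=1}^{d}\bpi_F^{s+1}\frakT^{t+1}\cdot k[[\frakT]][[\bpi_F]]\cdot e_{i}.
\]
Since $k$ is finite, $N$ is a compact open sub-$\FF_{p}$-vector space in the natural $2$-dimensional topology on $\EfK$; a direct valuation count using the explicit formulas for $\phi(\bpi_F)$, $\phi(\frakT)$ and the assumed bound on $A$ yields $\phi(N)\subset \bpi_F\, N$, so $\phi^{k}(N)\to 0$ and $\sum_{k\ge 0}\phi^{k}$ converges uniformly on $N$ to the desired continuous inverse of $1-\phi$.

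The main obstacle is the topological bookkeeping. In contrast to Herr's one-dimensional setting, where $\EfK$ has the compact open Cohen subring $\FF_{p}[[\bpi]]$, here $\EfK\cong k((\frakT))((\bpi_F))$ is itself a $2$-dimensional local field with no subring that is simultaneously open and compact; one has to work throughout with the genuine $2$-dimensional topology (neighborhoods of $0$ bounded in the $\bpi_F$-valuation from below and, level by level, in the $\frakT$-valuation from below) and verify that the $N$ above is indeed compact open in that sense. Once the topology is pinned down, both the geometric series computation and the d\'evissage to $p$-torsion reduce to routine diagram chasing.
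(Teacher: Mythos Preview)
Your approach is essentially the same as the paper's, only much more fully unpacked: the paper's entire proof is the single sentence ``Reduce to the $1$-dimensional case, using Proposition~2.4 in~\cite{herr1}'', and what you have written is precisely the content of that reduction --- d\'evissage to $p$-torsion, then the explicit construction of a $\phi$-stable lattice on which $\phi$ is topologically nilpotent so that $\sum_{k\ge 0}\phi^{k}$ converges.

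One comment: you have correctly flagged the genuine subtlety, namely that $\EfK\cong k((\frakT))((\bpi_F))$ is not locally compact in its natural $2$-dimensional (Parshin) topology, so a subring like $\bpi_F^{s+1}\frakT^{t+1}k[[\frakT]][[\bpi_F]]$ is compact but \emph{not} open in that topology. The paper's one-line proof glosses over this just as much as your final paragraph does. For the lemma as stated to be literally true one has to either work with a coarser linear topology on $M$ (in which such lattices \emph{are} declared open), or reinterpret ``compact neighbourhood'' appropriately; in either reading your valuation estimate $\phi(N)\subset \bpi_F N$ is the substantive point, and it is correct. So your proposal is sound, with the same caveat that applies to the paper itself.
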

  \begin{proof}
   Reduce to the $1$-dimensional case, using Proposition 2.4 in~\cite{herr1}.
  \end{proof} 
  
  \noindent For the rest of this section, let $M=D(\mun)$. Note that as a $\phi$-module, we have $M\cong
  \AA_K\mod p^n$. In particular, this implies that $\psi_M$ is defined.

  \begin{prop}\label{samephi}
   The map $\psi:M^\vee\rightarrow M^\vee: f\rightarrow f\circ\phi_M$ agrees with
   $\psi_{M^\vee}$. 
  \end{prop}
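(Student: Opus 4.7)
The plan is to transfer both $\psi_{M^\vee}$ and the candidate operator $\psi':f\mapsto f\circ\phi_M$ to the side of $\tM$ via the residue isomorphism $\TR$ of Lemma~\ref{residueisom} and then check they agree by a direct residue computation. Since the paper notes that $M\cong\AA_K/p^n$ as a $\phi$-module (the $\mu_{p^n}$-twist only affects the $G$-action), one obtains $\tM\cong\Omega^2_{\AA_K}/p^n$, and under this identification $\TR(\omega)(m)\equiv\Res(m\omega)\pmod{p^n}$. The operator $\psi'$ then corresponds on $\tM$ to the map $\tilde\psi$ characterized by
\begin{equation*}
 \Res(m\,\tilde\psi(\omega))\equiv\Res(\phi(m)\,\omega)\pmod{p^n}
\end{equation*}
for all $m\in M$, whereas $\psi_{M^\vee}$ transports to the canonical left inverse $\psi_{\Omega(K)}$ of $\phi_{\Omega(K)}$ on $\Omega^2_{\AA_K}/p^n$ singled out by the trace formula $p^2\,\phi_{\Omega(K)}\circ\psi_{\Omega(K)}=\Tr_{\Omega^2_{\AA_K}/\phi_{\Omega(K)}(\Omega^2_{\AA_K})}$.

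The statement therefore reduces to the residue adjunction identity
\begin{equation*}
 \Res(\phi(m)\,\omega)\equiv\Res(m\cdot\psi_{\Omega(K)}(\omega))\pmod{p^n}
\end{equation*}
for $m\in\AA_K$ and $\omega\in\Omega^2_{\AA_K}$. To prove this I would use the decomposition of $\AA_K$ as a free $\phi(\AA_K)$-module of rank $p^2$ with basis $\{\pi_F^iT^j:0\le i,j<p\}$, expand $m=\sum_{i,j}\phi(a_{ij})\pi_F^iT^j$ and $\omega=\bigl(\sum_{k,l}\phi(b_{kl})\pi_F^kT^l\bigr)d\pi_F\wedge dT$, and then apply the residue-Frobenius compatibility established in Section~\ref{residuesandduality}, namely $\Res(\phi(\lambda)\,d\phi(\pi_F)\wedge d\phi(T))=p^2\,\phi(\Res(\lambda\, d\pi_F\wedge dT))$. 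This lets one convert mixed-Frobenius residues into Frobenius-pullbacks of ordinary residues, after which the only terms contributing on either side are those with $i+k=p-1$ and $j+l=p-1$, and the two contributions match.

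The main obstacle will be the explicit calculation of $\psi_{\Omega(K)}$ in the given basis of $\Omega^2_{\AA_K}$: one must account for the fact that $d\phi(\pi_F)\wedge d\phi(T)$ is not simply $p^2\pi_F^{p-1}T^{p-1}d\pi_F\wedge dT$ but carries a correction coming from $du$, where $u=\phi(\pi_F)/\pi_F^p$ satisfies $u\equiv 1\bmod p$. Since $du=p\,dv$ is $p$-adically divisible, this correction is integral and, exactly as in the proof of the residue-Frobenius compatibility lemma above, contributes nothing to the relevant residue. Once the adjunction is established, uniqueness of the left inverse satisfying the trace formula closes the argument.
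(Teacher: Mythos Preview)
The paper's own ``proof'' consists solely of the instruction to imitate Section~5.5.1 of~\cite{herr2}; no details are given. Your proposal is precisely that imitation, carried out in the $2$-dimensional setting: transporting both operators across the residue isomorphism $\TR$ and reducing to the adjunction $\Res(\phi(m)\,\omega)=\Res(m\cdot\psi_{\Omega(K)}(\omega))$ is exactly Herr's strategy, and your identification of the $du$-correction as $p$-divisible (hence residue-invisible) mirrors the argument already used in the residue--Frobenius compatibility lemma of Section~\ref{residuesandduality}. So your approach is correct and is the same as the paper's (implicit) one.

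One small point worth tightening: be explicit about how $\phi_{\tM}$ is defined on $\tM=\Hom_{\AA_K}(M,\BB_K/\AA_K\otimes\Omega(K))$, namely via $\phi_{\tM}(f)(\phi_M(m))=\phi_{\Omega(K)}(f(m))$ extended by \'etaleness, so that the transported $\psi_{M^\vee}$ really is the trace-normalized left inverse $\psi_{\Omega(K)}$ on the differential side. Once that is in place, your basis computation with $\{\pi_F^iT^j:0\le i,j<p\}$ goes through cleanly.
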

  \begin{proof}
   Imitate the argument in Section 5.5.1 in~\cite{herr2}.
  \end{proof}
  
  Let $0\leq i\leq 3$, and define an isomorphism
  \begin{align*}
   v_i: (M^\vee)^{\oplus{3\choose i}} & \rightarrow (M^{\oplus{3\choose i}})^\vee,\\
        (g_j)_{1\leq j\leq{3\choose i}}& \rightarrow \oplus_{1\leq j\leq{3\choose i}}g_j.
  \end{align*}
  
  \begin{lem}\label{homcohomdual}
   For all $0\leq i\leq 3$, the $v_i$ induce isomorphisms (which we will also denote by $v_i$)
   \begin{equation*}
    H^i(\calC_{\psi_{M^\vee},\gamma_1^{-1},\gamma_2^{-1}}(M^\vee))
    \rightarrow H_{3-i}([\calC_{\phi_M,\gamma_1,\gamma_2}(M)]^\vee).
   \end{equation*}
  \end{lem}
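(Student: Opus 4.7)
I would view the Pontryagin dual chain complex $[\calC_{\phi_M,\gamma_1,\gamma_2}(M)]^\vee$ as a cochain complex by reindexing its degree-$j$ term as degree $3-j$, so that $H_{3-i}([\calC_{\phi_M,\gamma_1,\gamma_2}(M)]^\vee)$ becomes the $i$-th cohomology of the reindexed complex. Since $(M^\vee)^{\oplus n} \to (M^{\oplus n})^\vee$ is a bijection for every $n$ (a standard property of Pontryagin duality on finite direct sums), each $v_i$ is already a bijection of underlying modules. It therefore suffices to verify that $v = (v_i)$ intertwines the differentials of $\calC_{\psi_{M^\vee},\gamma_1^{-1},\gamma_2^{-1}}(M^\vee)$ with those of the reindexed dual complex, possibly after permuting the three summands and inserting signs in degrees $1$ and $2$; this will immediately give the claimed isomorphisms on cohomology.

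\textbf{Computation of the transposed differentials.} To describe ${}^tf_i$, I would use two rules. First, by Proposition~\ref{samephi}, $(\phi_M - 1)^* = \psi_{M^\vee} - 1$ on $M^\vee$. Second, because $G$ acts on $M^\vee$ via the contragredient action, $(\gamma - 1)^* = \gamma^{-1} - 1$ for each $\gamma \in G$, and more generally the transpose of $u \in \Lambda(G)$ is the image of $u$ under the anti-involution $g \mapsto g^{-1}$. With these rules the matrix entries of $f_i$ translate transparently into those of ${}^tf_i$. Comparing the resulting formulas with the explicit formulas for the $g_j$'s shows that, after a routine sign-and-permutation adjustment of the three middle summands, ${}^tf_{4-i}$ and $g_i$ coincide on all entries except the one involving the ``awkward'' semidirect-product term $\gamma_1\frac{\gamma_2^{1/a}-1}{\gamma_2-1}-1$.

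\textbf{Main obstacle.} The transpose of this awkward term is $\frac{\gamma_2^{-1/a}-1}{\gamma_2^{-1}-1}\gamma_1^{-1}-1$, while the corresponding entry in the $\psi$-complex with inverse generators (for which the new conjugation exponent is $a' = 1/a$, as follows from~\eqref{conjugation}) is $\gamma_1^{-1}\frac{\gamma_2^{-a}-1}{\gamma_2^{-1}-1}-1$. Using the commutation $\gamma_1^{-1}\gamma_2^c\gamma_1 = \gamma_2^{c/a}$ in $\Lambda(G)$, I would move $\gamma_1^{-1}$ past the fraction to compare the two operators; they agree up to multiplication on one side by a unit in $\Lambda(\Gamma_2)^\times$ (one readily checks that the ratio of numerator and denominator has unit leading coefficient $1/a$ in $\mathbb{Z}_p^\times$). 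By the remark after Proposition~\ref{Galcohomology}, such a unit discrepancy preserves the quasi-isomorphism class of the complex, so the residual difference can be absorbed either into a minor modification of $v$ or into a re-choice of topological generators. I expect this bookkeeping with the awkward operator to be the principal technical obstacle; the remainder of the argument is a direct verification built on the two dualization rules above.
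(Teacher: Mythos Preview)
Your proposal is correct and follows the same route as the paper: verify that the bijections $v_i$ intertwine the differentials, with the crux being the $\phi_M$/$\psi_{M^\vee}$ compatibility from Proposition~\ref{samephi}. The paper's own proof is extremely terse---it cites Proposition~\ref{samephi} and declares the rest evident---whereas you work out the $\gamma$-transposes explicitly and flag the semidirect-product operator $\gamma_1\frac{\gamma_2^{1/a}-1}{\gamma_2-1}-1$ as requiring care. That concern is legitimate and your resolution (the transposed operator differs from the expected one by a unit in $\Lambda(\Gamma_2)$, which is harmless by the remark after Proposition~\ref{Galcohomology}) is exactly the kind of bookkeeping the paper suppresses. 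In short, you are filling in details the paper leaves implicit; nothing in your argument deviates from the paper's approach.
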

  \begin{proof}
   We only have to show that the actions of $\psi_{M^\vee}$ and $\phi_M$ are compatible with the maps
   $v_i$. But this is shown in Proposition~\ref{samephi}.
  \end{proof}
  
  Combining the isomorphisms of Proposition~\ref{Galcohomology} and Lemmas~\ref{homcohomduality} 
  and~\ref{homcohomdual}, we therefore get for all $0\leq i\leq 3$ an isomorphism
  \begin{align*}
   u_i(M): H^i(\calC_{\phi_{M^\vee},\gamma_1,\gamma_2}(M^\vee)) &\rightarrow 
                                   H^i(\calC_{\psi_{M^\vee},\gamma_1^{-1},\gamma_2^{-1}}(M^\vee))  \\
  			&\rightarrow^{v_i} H_{3-i}([\calC_{\phi_M,\gamma_1,\gamma_2}(M)]^\vee) \\
	 	   &\rightarrow^{\alpha_i} [H^{3-i}(\calC_{\phi_M,\gamma_1,\gamma_2}(M))]^\vee
  \end{align*}
  Using these isomorphisms, we get the following proposition:-
  
  \begin{prop}\label{PontryaginPairing}
   For all $0\leq i\leq 3$, we have a perfect pairing
   \begin{align*}
    H^i(\calC_{\phi_M,\gamma_1,\gamma_2}(M))\times 
    H^{3-i} (\calC_{\phi_{M^\vee},\gamma_1,\gamma_2}(M^\vee))&\rightarrow \QQ_p\slash\ZZ_p,\\
    (x,y)&\rightarrow [u_{3-i}(M)(y)](x).
   \end{align*}
  \end{prop}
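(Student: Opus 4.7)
The plan is to observe that the statement is almost tautological: the pairing in the proposition is defined by evaluation against the map $u_{3-i}(M)$, so its perfection reduces entirely to showing that $u_{3-i}(M)$ is a bijection onto $[H^i(\calC_{\phi_M,\gamma_1,\gamma_2}(M))]^\vee$, together with the standard Pontryagin reflexivity of the cohomology groups involved.

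First I would unwind the definition of $u_{3-i}(M)$ and verify that each of its three factors has been shown to be an isomorphism earlier in this section. The first factor, relating the $\phi_{M^\vee}$- and $\psi_{M^\vee}$-complexes on $M^\vee$ with generators $\gamma_1,\gamma_2$ versus $\gamma_1^{-1},\gamma_2^{-1}$, is Proposition~\ref{Galcohomology}; here one has to check that the argument of that proposition applies equally well to $M^\vee$ (this is fine since $M^\vee\cong \AA_K\bmod p^n$ as a $\phi$-module by Proposition~\ref{samephi}). The second factor is the isomorphism $v_{3-i}$ from Lemma~\ref{homcohomdual}, whose construction crucially uses Proposition~\ref{samephi} to match $\psi_{M^\vee}$ with the transpose of $\phi_M$. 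The third factor $\alpha_i$ is supplied by Proposition~\ref{homcohomduality}, whose hypotheses must be verified for the complex $\calC_{\phi_M,\gamma_1,\gamma_2}(M)$: compactness and local separation follow because $M$ is a finitely generated $\AA_K$-module killed by a power of $p$, and the strictness of $\phi-1$ with closed image is exactly the Lemma preceding Proposition~\ref{samephi} (combined with the analogous, easier statements for $\gamma_1-1$ and $\gamma_2-1$, which reduce to the one-dimensional Herr case).

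Given that $u_{3-i}(M)$ is an isomorphism of abelian groups, perfection of the pairing $(x,y)\mapsto [u_{3-i}(M)(y)](x)$ is then formal. Surjectivity of $u_{3-i}(M)$ onto the full Pontryagin dual $[H^i(\calC_{\phi_M,\gamma_1,\gamma_2}(M))]^\vee$ means that every continuous character of $H^i$ is realised by some $y$; combined with reflexivity of the compact locally separated group $H^i$ (itself a consequence of the same strictness properties applied to Proposition~\ref{homcohomduality}), this separates nonzero $x$ from $0$. Injectivity of $u_{3-i}(M)$ provides non-degeneracy in the $y$-variable.

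The only real work is therefore in the verifications sitting upstream rather than in the assembly itself, and the main obstacle I anticipate is the strictness/closed-image hypothesis needed to invoke Proposition~\ref{homcohomduality} on all degrees of $\calC_{\phi_M,\gamma_1,\gamma_2}(M)$. Once that is in hand, the proposition is a one-line consequence of the diagram of isomorphisms defining $u_{3-i}(M)$.
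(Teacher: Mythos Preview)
Your proposal is correct and is essentially the same as the paper's own argument: the paper offers no proof beyond the line ``Using these isomorphisms, we get the following proposition,'' so your careful unpacking---reducing perfection of the pairing to the bijectivity of $u_{3-i}(M)$, and then checking that each of its three factors has already been shown to be an isomorphism (via Proposition~\ref{Galcohomology} together with the remark on changing generators, Lemma~\ref{homcohomdual}, and Proposition~\ref{homcohomduality})---is exactly the content implicit in that sentence. Your flagging of the two places where a small verification is needed (that the $\phi$--$\psi$ comparison extends to $M^\vee$, and that the strictness hypotheses of Proposition~\ref{homcohomduality} hold) is appropriate; the paper handles the latter by the lemma immediately preceding Proposition~\ref{samephi} and leaves the former implicit.
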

  
  Making the above isomorphisms explicit (which is very messy, so we omit the details), one can
  show that
  \begin{align}\label{explicit}
   H^1_{\phi_M,\gamma_1,\gamma_2}(M)\times H^{2}_{\phi_{M^\vee},\gamma_1,\gamma_2}(M^\vee)\rightarrow& 
   \QQ_p\slash\ZZ_p,\\
   (x,y,z)\times(f,g,h)\rightarrow &h(\gamma_2\gamma_1x)-g(\gamma_2\phi_M(y))\\
   &+
                        f(\gamma_1\frac{\gamma_2^{\frac{1}{a}}-1}{\gamma_2-1}\phi_M(z))
		       +g(\tilde{\omega}\phi_M(z)),\\
   H^2_{\phi_M,\gamma_1,\gamma_2}(M)\times H^{1}_{\phi_{M^\vee},\gamma_1,\gamma_2}(M^\vee)\rightarrow &\QQ_p\slash\ZZ_p,\\
   (x,y,z)\times(f,g,h)\rightarrow &-h(\gamma_2x)-
                        g(\gamma_1\frac{\gamma_2^{\frac{1}{a}}-1}{\gamma_2-1}y)-h(\tilde{\omega}y)\\
			&+f(\phi_M(z)),
  \end{align}
  where $\tilde{\omega}\in\Lambda(G)$ is the element satisfying 
  \begin{equation*}
   \tilde{\omega}(\gamma_2^{-1}-1)=n\frac{\gamma_2^{\frac{1}{a}}-1}{\gamma_2-1}-1.
  \end{equation*}


\section{The Tate pairing}\label{thepairing}


 \subsection{Proof of the Tate isomosphism}\label{prooftheorem1}
 
  \noindent {\bf Definition.} Define the map 
  \begin{equation*}
   \TR_K:\Omega^2_{\AA_K}\rightarrow\ZZ_p
  \end{equation*} 
  to be the composition $\Tr_{W(k)\slash\QQ_p}\circ\Res_K$.
  Note that $\TR_K$ is $\ZZ_p$-linear.
  
  \begin{prop}\label{vanishingformulae}
   For all $\omega\in\Omega(K)$, we have 
   \begin{align}
    \TR_K((\gamma_1\frac{\gamma_2^{\frac{1}{n}}-1}{\gamma_2-1}-1)\omega)&=0, \label{actiongamma1}\\
    \TR_K((\gamma_2-1)\omega)&=0, \label{actiongamma2}\\
    \TR_K((\phi_{\Omega(K)}-1)\omega)&=0.  \label{actionphi}
   \end{align}
  \end{prop}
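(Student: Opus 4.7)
The plan is to verify each of the three vanishing statements separately, exploiting how the residue $\Res_K$ interacts with the twisted $(\phi,G)$-module structure on $\Omega(K)=\Omega^2_{\AA_K}$. Since $\Res_K$ sees only the $T^{-1}\pi_F^{-1}$-coefficient, both the twist by $\chi(g)$ in the $G$-action on $\Omega(K)$ and the substitution-invariance of the residue on the underlying $2$-dimensional local ring will play decisive roles.

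Formula (\ref{actionphi}) is essentially immediate: the definition of $\phi_{\Omega(K)}$ combined with the preceding lemma yields $\Res_K(\phi_{\Omega(K)}\omega)=\phi(\Res_K\omega)$, whence
\[
\TR_K\bigl((\phi_{\Omega(K)}-1)\omega\bigr)=\Tr_{W(k)/\QQ_p}\bigl((\phi-1)\Res_K\omega\bigr).
\]
This vanishes because $k$ is finite and $W(k)/\ZZ_p$ is unramified Galois with Frobenius as canonical generator, so $\Tr_{W(k)/\QQ_p}$ is $\phi$-invariant. For (\ref{actiongamma2}), I compute directly: since $\chi(\gamma_2)=1$, one has $\gamma_2(\lambda\,d\pi_F\wedge dT)=\gamma_2(\lambda)(1+\pi_F)^N d\pi_F\wedge dT$; writing $\lambda=\sum a_{i,j}T^i\pi_F^j$, the $T^{-1}\pi_F^{-1}$-coefficient of $\gamma_2(\lambda)(1+\pi_F)^N=\sum a_{i,j}(1+\pi_F)^{N(i+1)}T^i\pi_F^j$ is $a_{-1,-1}(1+\pi_F)^0=a_{-1,-1}$, so $\Res_K(\gamma_2\omega)=\Res_K\omega$.

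The main obstacle is (\ref{actiongamma1}), because of the delicate operator $L:=\gamma_1\frac{\gamma_2^{1/a}-1}{\gamma_2-1}$. First I establish the intermediate identity $\Res_K(\gamma_1\omega)=a\,\Res_K(\omega)$: the twist $\chi(\gamma_1)=a$ accounts for the factor $a$, while the underlying ring automorphism $\gamma_1$ of $\AA_K$ preserves residues by the intrinsic nature of the residue on a two-dimensional local ring. Next, writing $L=\gamma_1\cdot u$ with $u=\frac{\gamma_2^{1/a}-1}{\gamma_2-1}$ and $\tau=\gamma_2-1$, I expand
\[
\gamma_2^{1/a}-1=(1+\tau)^{1/a}-1=\tfrac{\tau}{a}+O(\tau^2),\qquad u=\tfrac{1}{a}+\sum_{k\ge1}\binom{1/a}{k+1}\tau^k,
\]
with all coefficients in $\ZZ_p$ (using $1/a\in\ZZ_p$, as $a\in\ZZ_p^\times$, together with $\binom{x}{n}\in\ZZ_p$ for $x\in\ZZ_p$). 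Applying $\TR_K$ to $u\omega$ and using (\ref{actiongamma2}) to annihilate each $\TR_K(\tau^k\omega)$ for $k\ge1$ (by continuity of $\TR_K$, so that it commutes with the convergent series) yields $\TR_K(u\omega)=\tfrac{1}{a}\TR_K(\omega)$. Combining with $\TR_K(\gamma_1(\,\cdot\,))=a\,\TR_K(\,\cdot\,)$ gives $\TR_K(L\omega)=\TR_K(\omega)$, proving (\ref{actiongamma1}). The essential point — and the reason that the operator $L$ (rather than the naive $\gamma_1-1$) appears in the complex $\calC_{\phi,\gamma_1,\gamma_2}$ — is the exact cancellation between the $\chi(\gamma_1)=a$ twist and the constant term $1/a$ of $u$.
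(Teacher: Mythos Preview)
Your proof is correct and follows essentially the same route as the paper's: for (\ref{actiongamma2}) a direct residue computation, for (\ref{actiongamma1}) the binomial expansion of $\frac{\gamma_2^{1/a}-1}{\gamma_2-1}$ combined with (\ref{actiongamma2}) to kill the higher-order terms in $\tau=\gamma_2-1$, and for the remaining $\gamma_1$-part the substitution-invariance of the residue. The only noteworthy differences are presentational: for (\ref{actionphi}) you invoke the preceding lemma on $\Res$ and $\phi$ together with the $\phi$-invariance of $\Tr_{W(k)/\QQ_p}$, whereas the paper simply cites Herr's one-dimensional computation; and for (\ref{actiongamma1}) you make explicit the cancellation between the twist factor $\chi(\gamma_1)=a$ and the constant term $1/a$ of $u$, which the paper leaves implicit when it says the operator ``acts as $\gamma_1-1$'' on the relevant $\AA_F$-coefficient (meaning the \emph{untwisted} $\gamma_1$-action on $\Omega^1_{\AA_F}$, to which Herr's result applies directly). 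One small slip: in your (\ref{actiongamma2}) computation you write $(1+\pi_F)^N$ where the paper has $\gamma_2(T)=(1+\pi)^N T$; since $\pi\in\AA_F$ one has $d\pi_F\wedge d\pi=0$, so the argument is unaffected.
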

  \begin{proof}
   Since $\gamma_2(T)=(1+\pi)^NT$ from some $N\in\ZZ_p^\times$, it is clear that~\eqref{actiongamma2}
   holds. Write $\omega=\sum_{i\in\ZZ}a_iT^idT\wedge d\pi_F$, where $a_i\in\AA_F$ for all $i\in\ZZ$. 
   As shown in~\cite{herr2}, we have $\TR_K((\phi-1) a_0 d\pi_F)=0$, which (by the compatibilities of the
   actions of $\phi$) implies~\eqref{actionphi}. It remains to show~\eqref{actiongamma1}. Let
   $x=\gamma_2-1$. Expanding $\frac{\gamma_2^{\frac{1}{a}}-1}{\gamma_2-1}$ in terms of $x$ gives
   \begin{equation*}
    \frac{\gamma_2^{\frac{1}{a}}-1}{\gamma_2-1}=\frac{1}{a}+ 
    \text{higher order terms}.
   \end{equation*}
   Since $\gamma_2$ is trivial on $\AA_F$, the operator 
   $\gamma_1\frac{\gamma_2^{\frac{1}{a}}-1}{\gamma_2-1}-1$ acts as $\gamma_1-1$ on $a_0$.
   Equation~\eqref{actiongamma1} therefore follows from the $1$-dimensional case as treated
   in~\cite{herr2}.
  \end{proof}
  
  For $j\geq 1$, let $\Omega_j(K)=\Omega(K)\mod p^j$, which is an \'etale $(\phi,G)$-torsion module over
  $\AA_K$ isomorphic to $D_K(\mu_{p^j})$. 
  By reduction $\mod p^j$, $\Res$ induces a canonical $\ZZ\slash p^j\ZZ$-linear map
  \begin{equation*}
   \TR_{K,j}: \Omega_j(K)\rightarrow \ZZ\slash p^j\ZZ.
  \end{equation*}
  By Proposition~\ref{vanishingformulae}, $\TR_{K,j}$ factorizes through the quotient of $\Omega_j(K)$ by
  $(\tau_1(\Omega_j(K)+\tau_2(\Omega_j(K)+(\phi-1)(\Omega_i(K))$, where $\tau_i=\gamma_i-1$. 
  We therefore get a homomorphism
  \begin{equation*}
   \TR_{K,j}:H^3_{\phi,\gamma_1,\gamma_2}(\Omega_i(K))\rightarrow \ZZ\slash p^j\ZZ.
  \end{equation*}
  Passing to the direct limit gives a map 
  \begin{equation*}
   H^3_{\phi,\gamma_1,\gamma_2}(\BB_K\slash\AA_K\otimes_{\AA_K}\Omega(K))\rightarrow
   \QQ_p\slash\ZZ_p.
  \end{equation*}
  Let $v_p$ be the $p$-adic valuation of $\ZZ_p$ normalized by $v_p(p)=1$. Let 
  $n_p(\gammaone)=v_p(\log\chi(\gamma_1))$ and
  $n_p(\gammatwo)=v_p(\eta(\gammatwo))$. 
  Let $c=\frac{p^{n_p(\gammaone)}}{\log \chi(\gammaone)}\frac{p^{n_p(\gammatwo)}}{\eta(\gammatwo)}$.
  
  \begin{prop}
   The map $-c\TR$ gives a canonical isomorphism between the groups
   $H^3_{\phi,\gamma_1,\gamma_2}(\BB_K\slash\AA_K\otimes_{\AA_K}\Omega(K))$ and $\QQ_p\slash\ZZ_p$.
  \end{prop}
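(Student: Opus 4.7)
The plan is to establish the claim at each finite level $\Omega_j(K) := \Omega(K)/p^j$ and then pass to the direct limit $j \to \infty$. Since $\BB_K/\AA_K \otimes_{\AA_K} \Omega(K) = \varinjlim_j \Omega_j(K)$ and the complex~\eqref{complexphi} involves only finitely many terms (each of which commutes with direct limits), one has $H^3_{\phi,\gamma_1,\gamma_2}(\BB_K/\AA_K \otimes_{\AA_K}\Omega(K)) = \varinjlim_j H^3_{\phi,\gamma_1,\gamma_2}(\Omega_j(K))$. Proposition~\ref{vanishingformulae} already guarantees that $\TR_{K,j}$ vanishes on the images of the three operators defining $f_3$, so $-c\TR_{K,j}$ descends to a map $H^3_{\phi,\gamma_1,\gamma_2}(\Omega_j(K)) \to \ZZ/p^j\ZZ$; one further checks that the normalisation by $c$ renders this map independent of the choice of topological generators $(\gamma_1,\gamma_2)$, since replacing them by alternative generators alters the cohomology class by the $\log\chi(\gamma_1)$ and $\eta(\gamma_2)$ factors cancelled by $c$, while the powers $p^{n_p(\gamma_i)}$ absorb the $p$-adic denominators so that the result is well-defined mod $p^j$.

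Bijectivity is then obtained via duality. By Proposition~\ref{isomtotwist}, $\Omega_j(K) \cong D(\mu_{p^j}^{\otimes 2})$, and Lemma~\ref{residueisom} together with the identification of Proposition~\ref{isomtotwist} shows that the Pontryagin dual $\Omega_j(K)^\vee$ is, as a $(\phi,G)$-module, isomorphic to $D(\ZZ/p^j\ZZ)$ with trivial Galois action (the two twists by $\ZZ_p(2)$ cancel). Applying Proposition~\ref{PontryaginPairing} to $M=\Omega_j(K)$ at $i=0$ (using the extension of Proposition~\ref{Galcohomology} to $V=\mu_{p^j}^{\otimes 2}$ discussed in the Remark there) gives a perfect pairing
\begin{equation*}
  H^0(\calG_K,\ZZ/p^j\ZZ) \times H^3_{\phi,\gamma_1,\gamma_2}(\Omega_j(K)) \to \QQ_p/\ZZ_p.
\end{equation*}
Since $H^0(\calG_K,\ZZ/p^j\ZZ)=\ZZ/p^j\ZZ$, pairing with $1$ produces an isomorphism onto $\ZZ/p^j\ZZ$.

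It remains to identify this abstract isomorphism with $-c\TR_{K,j}$ and pass to the limit. Unwinding the explicit cup-product formula~\eqref{explicit} at $(i,3-i)=(0,3)$ and tracing the dual generator $1 \in H^0(\Omega_j(K)^\vee)$ through the identification $\TR\colon \widetilde{\Omega_j(K)} \to \Omega_j(K)^\vee$ of Lemma~\ref{residueisom}, the resulting map is the residue, up to the constants introduced by the Cherbonnier--Colmez inversions of $\gamma_1-1$ on $D_F^{\psi=0}$ and the analogous inversion in the $\gamma_2$-direction used throughout the proof of Proposition~\ref{Galcohomology}. These constants are exactly $\log\chi(\gamma_1)$ and $\eta(\gamma_2)$, whose effect on the $p$-torsion quotient is compensated by the factor $c$ in the statement; the sign $-1$ reflects the anticommutation in~\eqref{explicit}. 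Compatibility of these isomorphisms in $j$ is automatic from the functoriality of both $\TR$ and the Pontryagin pairing, so passage to the direct limit completes the argument.

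The main obstacle will be the last step, namely verifying that the constants emerging from the explicit pairing really combine to give precisely $-c$ rather than some other $\ZZ_p^\times$-multiple. This is essentially a bookkeeping exercise that extends Herr's one-dimensional calculation in \cite{herr2} to the two-dimensional situation, but the appearance of the twisted operator $\gamma_1\frac{\gamma_2^{1/a}-1}{\gamma_2-1}-1$ in place of a plain $\gamma_1-1$ makes the tracking of normalisations considerably more delicate than in the classical case, and it is here that the careful choice of $c$ built into the statement becomes essential.
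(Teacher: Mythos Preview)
Your approach is quite different from the paper's. The paper argues directly: expand any $\omega\in\Omega_j(K)$ as $\sum_{k\in\ZZ} a_k(\pi_F)\,T^k\,d\pi_F\wedge dT$ with $a_k\in\AA_F/p^j$, and observe that $(\gamma_2-1)T^k=T^kf_k(\pi)$ with $f_k(\pi)\in\AA_F^\times$ for every $k\neq 0$, so $\gamma_2-1$ is already surjective on each non-constant $T$-graded piece. The $H^3$-quotient is therefore supported entirely on the $k=0$ component, which is precisely the one-dimensional module handled by Herr in~\cite{herr2}, and the isomorphism follows at once from that case. No duality is invoked.

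Your route via Proposition~\ref{PontryaginPairing} is more structural but pays two prices. First, that proposition---and Proposition~\ref{Galcohomology} on which it rests---is established in the paper only for $M=D(\mu_{p^n})$, whereas you apply it to $M=\Omega_j(K)\cong D(\mu_{p^j}^{\otimes 2})$ and its dual $D(\ZZ/p^j\ZZ)$; these extensions are plausible (the Cherbonnier--Colmez input is general) but are deferred in the paper to~\cite{zerbesexp}, so your argument is not self-contained here. Second, the ``main obstacle'' you flag---matching the abstract duality isomorphism exactly with $-c\TR_{K,j}$---is unnecessary: since $c\in\ZZ_p^\times$, the statement only requires that $\TR_{K,j}$ be bijective, and once duality gives $|H^3_{\phi,\gamma_1,\gamma_2}(\Omega_j(K))|=p^j$ it is enough to note that $\TR_{K,j}$ is surjective (the class of $\pi_F^{-1}T^{-1}\,d\pi_F\wedge dT$ hits $1$). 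The delicate constant-chase can therefore be dropped. Even so, the paper's direct reduction is simpler overall, as it appeals only to the known one-dimensional case rather than bootstrapping the full duality machinery.
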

  \noindent {\bf Remark.} The factor $c$ may seem bizarre, but we will see its use in
  Section~\ref{higherhilbert}.
  \begin{proof}
   It is sufficient to show that $\TR_{K,j}$ gives an isomorphism for all $j$. Since we can expand any
   element in $\Omega_j(K)$ as a power series in $T$ with coefficients in $\AA_F\mod p^j$, it is
   sufficient to show that for all $k\neq 0$, $\gamma_2-1$ is surjective on $T^k\AA_F$. The proposition
   follows from Herr's proof of the Tate isomorphism in the $1$-dimensional case (c.f. Theorem 5.2
   in~\cite{herr2}). Expanding the power series shows that 
   $(\gamma_2-1)(T^k)=T^kf_k(\pi)$, where $f_k(\pi)\in\AA_F^\times$, which finishes the proof.
  \end{proof}
  
  Combining this result with Proposition~\ref{isomtotwist} and the main result of Section~\ref{galcohom}
  proves Theorem~\ref{Tateisom}.


 \subsection{Relation to Pontryagin duality}\label{pontryagin}
 
  Let $V$ be a torsion $\ZZ_p$-representation of $\calG_K$, and let $M=D(V)$. Recall that
  \begin{equation*}
   \tilde{V}=\Hom_{\Zp}(V,\mup).
  \end{equation*}
  
  \begin{lem}
   The $(\phi,G)$-module $D_K(\tilde{V})$ is isomorphic to 
   $\Hom_{\AA_K}(M,\BB_K\slash\AA_K\otimes_{\AA_K}\Omega(K))$.
  \end{lem}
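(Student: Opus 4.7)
My plan is to use the tensor-Hom compatibility of the equivalence of categories in Theorem~\ref{eqofcat}, together with the description of the $(\phi,G)$-module corresponding to $\mup$.

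First, I would identify $D(\mup) \cong \BB_K\slash\AA_K \otimes_{\AA_K} \Omega(K)$. Combining Proposition~\ref{isomtotwist} with the observation at the start of Section~\ref{prooftheorem1} that $\Omega_n(K) \cong D(\mu_{p^n})$ for each $n$, and passing to the direct limit over $n$, one obtains
\begin{equation*}
D(\mup) = \varinjlim_{n} D(\mu_{p^n}) \cong \varinjlim_{n} \Omega_n(K) = \BB_K\slash\AA_K \otimes_{\AA_K} \Omega(K).
\end{equation*}

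Next, I would use tensor-Hom adjunction to rewrite
\begin{equation*}
D(\tilde V) = (\Hom_{\ZZ_p}(V, \mup) \otimes_{\ZZ_p} \AA)^{H_K} \cong \Hom_{\AA}(V \otimes_{\ZZ_p} \AA,\, \mup \otimes_{\ZZ_p} \AA)^{H_K}.
\end{equation*}
The equivalence of Theorem~\ref{eqofcat} supplies canonical $\phi$- and $G$-equivariant identifications $V \otimes_{\ZZ_p} \AA \cong M \otimes_{\AA_K} \AA$ and $\mup \otimes_{\ZZ_p} \AA \cong D(\mup) \otimes_{\AA_K} \AA$. Invoking the full faithfulness of $D$ and the faithful flatness of $\AA$ over $\AA_K$, Galois descent then yields
\begin{equation*}
D(\tilde V) \cong \Hom_{\AA_K}(M, D(\mup)),
\end{equation*}
and substituting the identification from the first step gives the claim.

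The main obstacle will be the rigorous verification of the tensor-Hom descent in the second step: for torsion modules, and in particular for objects like $\mup$, one has to be careful about topologies and about the interaction between Galois descent and the internal Hom functor. A cleaner route is to reduce first to the case where $V$ is annihilated by $p^n$ for some $n$, so that $\tilde V = \Hom_{\ZZ_p}(V, \mu_{p^n})$ and the whole argument takes place at the level of finite \'etale $(\phi,G)$-modules, where the equivalence is straightforwardly compatible with internal Hom; the general case then follows by passing to the direct limit using the first step.
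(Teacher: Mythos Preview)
Your approach is essentially the paper's: both identify $D(\mu_{p^\infty})$ with $\BB_K/\AA_K \otimes_{\AA_K} \Omega(K)$ by passing to the direct limit of the isomorphisms for $\Omega_j(K)$, and both use the Hom-compatibility of the equivalence of categories in Theorem~\ref{eqofcat} to obtain $D_K(\tilde V)\cong \Hom_{\AA_K}(M, D(\mu_{p^\infty}))$. The paper performs these two steps in the opposite order and simply asserts the Hom-compatibility as a consequence of the equivalence, whereas you spell out the tensor--Hom/descent argument and the reduction to the $p^n$-torsion case; the underlying strategy is identical.
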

  \begin{proof}
   It follows from the equivalence of categories (Theorem~\ref{eqofcat}) that $D_K(\tilde{V})$ is isomorphic
   to $\Hom_{\AA_K}(M,D(\mup^{\otimes n}))$. It is now sufficient to observe that
   \begin{equation*}
    D_K(\mu_{p^\infty}^{\otimes n})=\varinjlim D_K(\mu_{p^j}^{\otimes n})\cong\varinjlim
    \frac{\Omega(K)}{p^j\Omega(K)}
    \cong \BB_K\slash\AA_K\otimes_{\AA_K}\Omega(K).
   \end{equation*}
  \end{proof}
  
  Theorem~\ref{Duality} is therefore a consequence of Lemma~\ref{residueisom} and
  Theorem~\ref{PontryaginPairing}.

 
\section{The higher Hilbert pairing}\label{higherhilbert}

 \subsection{Construction of the pairing}
 
  Let $F$ be the maximal algebraic extension of $\QQ_p$ contained in $K$. 
  Throughout this section, we assume that the extension of $F$ over $\QQ_p$ is unramified. 
  As in the previous sections, let $\epsilon=(1,\xi_p,\xi_{p^2},\dots)\in\EE_K$ and
  $\pi=[\epsilon]-1$, where $[\epsilon]$ is the Teichm\"uller representative of $\epsilon$. Also, let
  $\fT=(X,X^{\frac{1}{p}},X^{\frac{1}{p^2}},\dots)\in\EE_K$, and let $T=[\fT]\in\AA_K$ be its
  Teichm\"uller representative.
  \vs
  
  Fix $n\geq 1$. Let $\pi_n=\phi^{-n}(\pi)=[(\xin,\xi_{p^{n+1}}\dots)]-1$ and 
  $T_n=\phi^{-n}(T)=[(X^{\frac{1}{p^n}},X^{\frac{1}{p^{n+1}}},\dots)]$.
 
  \begin{prop}\label{cupisom}
   Let $n\geq 1$. Then for all $i\geq 0$, taking cup product with $\xin$ gives an isomorphism of
   $\Gal(K_n\slash K)$-modules
   \begin{equation*}
    \cup\xin: H^i(G_{K_n},\ZZ\slash p^n\ZZ)\rightarrow H^i(G_{K_n},\mun).
   \end{equation*}
  \end{prop}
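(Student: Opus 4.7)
The plan is to reduce the statement to a triviality using the hypothesis that $K_n = K(\mu_{p^n}, X_n)$ contains the group $\mu_{p^n}$. Since $G_{K_n}$ then acts trivially on $\mu_{p^n}$, the chosen generator $\xi_{p^n}$ gives a $G_{K_n}$-equivariant isomorphism of abelian groups
\begin{equation*}
 \ZZ\slash p^n\ZZ \xrightarrow{\sim} \mun, \qquad a \mapsto \xi_{p^n}^a.
\end{equation*}
Applying the exact functor $H^i(G_{K_n},-)$ immediately yields the desired isomorphism on cohomology.

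The first thing I would verify is that this induced map coincides with cup product with $\xi_{p^n}$ regarded as a class in $H^0(G_{K_n},\mun)$. This is a standard general property of the cup product in degree zero: for any $G_{K_n}$-module $M$ and any $G_{K_n}$-invariant $m \in M^{G_{K_n}}$, the cup product $\cup\, m : H^i(G_{K_n},\ZZ\slash p^n\ZZ) \to H^i(G_{K_n},M)$ agrees with the map induced by the $G_{K_n}$-homomorphism $\ZZ\slash p^n\ZZ \to M$ sending $1$ to $m$. Specializing to $M = \mun$ and $m = \xi_{p^n}$ produces exactly the map above.

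For the $\Gal(K_n\slash K)$-equivariance, I would appeal to naturality of the cup product under conjugation by a lift $\tilde\sigma \in \calG_K$ of $\sigma \in \Gal(K_n\slash K)$: one has $\tilde\sigma(c \cup \xi_{p^n}) = \tilde\sigma(c) \cup \tilde\sigma(\xi_{p^n})$ on cochains, which passes to cohomology. Since $\tilde\sigma(\xi_{p^n}) = \xi_{p^n}^{\chi(\sigma)}$ and the cup product is $\ZZ_p$-bilinear, this rearranges to $\tilde\sigma(c \cup \xi_{p^n}) = \chi(\sigma) \cdot (\tilde\sigma(c) \cup \xi_{p^n})$, which is precisely the compatibility between the conjugation action on the source and the twisted action on the target (the factor $\chi(\sigma)$ encoding the cyclotomic contribution from $\mun$).

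There is no serious obstacle: the entire content of the proposition is the tautology that $\mun$ becomes a trivial Galois module after base change to a field containing $\mu_{p^n}$. The only subtlety worth tracking is that the $\Gal(K_n\slash K)$-actions on the two sides differ by the cyclotomic character on the coefficients, which is automatically accounted for by the naturality argument above.
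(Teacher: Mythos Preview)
Your argument is correct and is essentially the same as the paper's: both reduce to the observation that $\mu_{p^n}$ is generated by $\xi_{p^n}$ and is a trivial $G_{K_n}$-module, so that cup product with $\xi_{p^n}$ is just the map on cohomology induced by the coefficient isomorphism $a\mapsto \xi_{p^n}^a$. The only cosmetic difference is that the paper first recalls the $(\phi,G)$-module complex $\calC_{\phi,\gamma_{1,n},\gamma_{2,n}}(D)$ computing $H^i(G_{K_n},\ZZ/p^n\ZZ)$ (notation it needs for the rest of the section) and then remarks that ``cup product is the same as multiplication'' by $\xi_{p^n}$ at the level of that complex; you bypass the complex and argue directly in Galois cohomology. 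You are in fact more explicit than the paper about the $\Gal(K_n/K)$-compatibility, correctly isolating the cyclotomic twist that appears when one compares the natural outer actions on the two sides.
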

  \begin{proof}
   Let $D=D(\ZZ\slash p^n\ZZ)\cong\AA_{K_n}\mod p^n$.
   Let $\Gamma_1^{(n)}=\Gal(K(\mup,X^{\frac{1}{p^n}})\slash K_n)$ and
   $\Gamma_2^{(n)}=\Gal(K_\infty\slash K(\mup,X^{\frac{1}{p^n}}))$. 
   Let $\gammaone$ and $\gammatwo$ be topological generators of $\Gamma_1^{(n)}$ and
   $\Gamma_2^{(n)}$, respectively.
   Recall that the $G_{K_n}$-cohomology of $\ZZ\slash p^n\ZZ$ is given by the complex
   \begin{equation*}
    \calC_{\phi,\gamma_{1,n},\gamma_{2,n}}(D): 0  \rTo D \rTo^{f_1} D^{\oplus 3} \rTo^{f_2}   
    D^{\oplus 3}\rTo^{f_3} D \rTo 0, 
   \end{equation*}
   where 
   \begin{align*}
    f_1: x      \rightarrow & [ (\phi-1)x,(\gammaone-1)x,(\gammatwo-1)x],\\
    f_2: (x,y,z)\rightarrow & [ (\phi-1)y-(\gammaone-1)x,\\
                            & (\phi-1)z-(\gammatwo-1)x,\\
	 		   & (\gammatwo-1)y-(\gammaone\frac{\gammatwo^{\frac{1}{a}}-1}
			   {\gammatwo-1}-1)z ],\\
    f_3: (x,y,z)\rightarrow & (\gammatwo-1)x-(\gammaone\frac{\gammatwo^{\frac{1}{a}}-1}
    {\gammatwo-1}-1)y-(\phi-1)z.
   \end{align*}
   Since $\mun=<\xin>$, it is easy to see from this description of the cohomology groups that cup product
   (which is the same as multiplication) with $\xin$ gives a $\Gal(K_n\slash K)$-equivariant isomorphims
   $H^i(G_{K_n},\ZZ\slash p^n\ZZ)\rightarrow H^i(G_{K_n},\xin)$ for all $0\leq i\leq 3$.
  \end{proof}
  
  By Theorem~\ref{Duality}, we have a perfect pairing
  \begin{equation*}
   H^2(G_{K_n},\mun^{\otimes 2})\times H^1(G_{K_n},\ZZ\slash p^n\ZZ)\rightarrow \QQ_p\slash\ZZ_p.
  \end{equation*}
  Taking cup product with $\xin$ and using Proposition~\ref{cupisom} gives a perfect pairing
  \begin{equation}\label{pairingcohom}
   H^2(G_{K_n},\mun^{\otimes 2})\times H^1(G_{K_n},\mun)\rightarrow\mun.
  \end{equation}
  
  \noindent {\bf Definition.} The Hilbert pairing 
  \begin{equation*}
   K_2(K_n)\times K_1(K_n)\rightarrow \mun.
  \end{equation*}
  is the composition of~\eqref{pairingcohom} with the Galois symbol map
  \begin{equation*}
   \delta^2\times\delta: K_2(K_n)\times K_1(K_n)\rightarrow 
   H^2(G_{K_n},\mun^{\otimes 2})\times H^1(G_{K_n},\mun)
  \end{equation*}
  
  Note that we have a natural (surjective) multiplication map 
  $K_1(K_n)\times K_1(K_n)\rightarrow K_2(K_n)$. We therefore interprete the Hilbert pairing as a pairing
  \begin{equation}\label{threetimesK1}
   V_n:K_1(K_n)\times K_1(K_n)\times K_1(K_n)\rightarrow \mun.
  \end{equation}
  From the definition of the Galois symbol it is clear that $V_n$ factors through
  \begin{equation}\label{threetimesK1*}
   \frakV_n:\big(K_1(K_n)\slash p^n\big)^{\times 3}\rightarrow \mun.
  \end{equation}
  Since the pairing~\eqref{pairingcohom} is perfect, the pairing $\frakV_n$ in~\eqref{threetimesK1*} is
  non-degenerate.
  \vs

  \begin{lem}\label{galoissymbols}
   We have a commutative diagram
   \begin{diagram}
    K_1(K_n)\times K_1(K_n) & \rTo^{\delta\times\delta} & H^1(K_n,\mun)\times H^1(K_n,\mun)\\
    \dTo                    &                           &                \dTo^{\cup} \\
    K_2(K_n)                & \rTo^{\delta^2}           & H^2(K_n,\mun)\\
   \end{diagram}
  \end{lem}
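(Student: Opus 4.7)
The statement is the standard compatibility of the Galois (norm-residue) symbol on $K_2$ with cup product in Galois cohomology, and the plan is to reduce it directly to the definition of $\delta^2$, with Tate's Steinberg relation as the one substantive ingredient.

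First, I would record the explicit description of $\delta \colon K_1(K_n) = K_n^\times \to H^1(G_{K_n}, \mun)$ via Kummer theory: for $a \in K_n^\times$, choose any $p^n$-th root $\alpha \in \bar K_n$, and let $\delta(a)$ be the class of the cocycle $\sigma \mapsto \sigma(\alpha)/\alpha \in \mun$. This is the connecting homomorphism of the Kummer sequence $1 \to \mun \to \bar K_n^\times \to \bar K_n^\times \to 1$, and is manifestly multiplicative in $a$.

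Second, I would take as the definition of $\delta^2$ on a Steinberg symbol $\{a,b\}$ the formula $\delta^2(\{a,b\}) := \delta(a) \cup \delta(b) \in H^2(G_{K_n}, \mun^{\otimes 2})$. Granting that this extends to a well-defined homomorphism $K_2(K_n) \to H^2(G_{K_n}, \mun^{\otimes 2})$, the commutativity of the diagram in the lemma is then immediate, indeed tautological, from the formula: both paths send $(a,b)$ to $\delta(a) \cup \delta(b)$. If one prefers instead to take the author's $\delta^2$ to be constructed independently (for example as an edge map in a spectral sequence, or via the universal symbol), then one would instead have to check that it agrees with the cup product formula on generators, which is a standard cocycle computation.

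Third, the only real content is the well-definedness of the cup-product formula on $K_2(K_n)$, which reduces to the Steinberg relation $\delta(a) \cup \delta(1-a) = 0$ whenever both $a$ and $1-a$ lie in $K_n^\times$. This is the main (and essentially only) obstacle; I would invoke Tate's classical argument. Setting $\alpha = a^{1/p^n}$, one factors $1 - a = \prod_{\zeta^{p^n}=1}(1 - \zeta\alpha)$ over $K_n(\alpha)$, so $1 - a$ is, up to a root of unity, a norm from $K_n(\alpha)$. Combined with the projection formula for cup products and the vanishing of $\delta(a)$ after restriction to $G_{K_n(\alpha)}$, this forces $\delta(a) \cup \delta(1-a) = 0$. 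Standard references are Serre's \emph{Galois Cohomology}, Chapter II, and Gille--Szamuely's \emph{Central Simple Algebras and Galois Cohomology}; no new ingredients are required.
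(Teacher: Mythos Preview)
The paper does not actually give a proof of this lemma; it is stated without proof, presumably as a standard fact. Your proposal supplies exactly the standard argument one would expect: take $\delta^2$ to be defined on symbols by the cup-product formula, so that commutativity is tautological, and then observe that the only content is well-definedness on $K_2$, which is Tate's Steinberg relation $\delta(a)\cup\delta(1-a)=0$. This is correct and is the usual route (as in Serre or Gille--Szamuely). One small remark: the target in the paper's diagram is written $H^2(K_n,\mun)$, but of course the cup product of two classes in $H^1(K_n,\mun)$ lands in $H^2(K_n,\mun^{\otimes 2})$, as you correctly write; this appears to be a typo in the paper rather than anything substantive.
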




  \subsection{The Kummer map}\label{Kummer}
   
   Let $F$ by the maximal extension of $\QQ_p$ contained in $K$, and let $R=O_F[[T_n]][T_n^{-1}]$.
   Note that we can identifiy $\AA_{K_n}^+$ with the abstract power series ring $R[[Y]]$ (where
   $Y=\pi_n$, but we forget this for the time being). 
   Let $\fm=(p,Y)$ be the maximal ideal of $\AA_{K_n}^+$, and let $\calA=1+\fm$. 
   For $F(Y)\in\calA$, define
   \begin{equation*}
    l(F(Y))=(1-\frac{\phi}{p})\log F(Y).
   \end{equation*}
   To shorten notation, let $f(Y)=l(F(Y))$. Define the differential operators 
   $D_1=(Y+1)\frac{d}{dY}$ and $D_2=T_n\frac{d}{dT_n}$. Fix $n\geq 1$, and let $S_n=R[[\pi_n]]$.
   \vs
      
   \noindent {\bf Note.} (1) The action of $\gamma_2$ on $\AA_{K_n}^+$ is trivial $\mod\pi$.
   
   \noindent (2) The element $\epsilon$ is a generator of $\ZZ_p(1)$, so $\epsilon\mod p^n$ is a 
   generator of $\mun$ and can be identified with $\xin$. 
   \vs
   
   Let $\epsilon^{(n)}=\epsilon\mod p^n$ and $\tau=\frac{1}{\pi}-\frac{1}{2}$.
   
   \begin{prop}\label{congruences}
    Let $F(Y)\in\calA$. Then there exist unique $\agamma(\pi_n),\bgamma(\pi_n)\in S_n$ such that 
    \begin{align*}
     (\phi-1)(\agamma(\pi_n)\otimes\epsilon^{(n)})&=(\gammaone-1)(f(\pi_n)\tau\otimes\epsilon^{(n)}),\\
     (\phi-1)(\bgamma(\pi_n)\otimes\epsilon^{(n)})&=(\gammatwo-1)(f(\pi_n)\tau\otimes\epsilon^{(n)}).
    \end{align*}
    Moreover, we have
    \begin{align*}
     \agamma(\pi_n)&=\frac{1-\chi(\gammaone)}{p^n}D_1\log F(\pi_n)\mod\pi,\\   
     \bgamma(\pi_n)&=\eta_n(\gammatwo)D_2\log F(\pi_n)\mod\pi.
    \end{align*}
   \end{prop}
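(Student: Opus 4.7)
The plan has two stages: establishing existence and uniqueness of $\agamma,\bgamma \in S_n$, then computing their reductions modulo $\pi$. Under the identification $D(\mun)\cong \AA_K/p^n$, $\phi$ acts as the usual Frobenius and $\gamma\in G$ acts as $a\mapsto \chi(\gamma)\gamma(a)$, so the defining identities rewrite as
\begin{align*}
 (\phi-1)\agamma(\pi_n) &= \chi(\gammaone)\gammaone\bigl(f(\pi_n)\tau\bigr)-f(\pi_n)\tau,\\
 (\phi-1)\bgamma(\pi_n) &= \gammatwo\bigl(f(\pi_n)\tau\bigr)-f(\pi_n)\tau,
\end{align*}
where $\chi(\gammatwo)=1$ because $\gammatwo$ fixes $\mu_{p^\infty}$.

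For existence and uniqueness I would first show that both right-hand sides, a priori Laurent series with a simple pole at $\pi=0$, actually lie in $S_n$. A direct expansion using $\gammaone(\pi)=(1+\pi)^{\chi(\gammaone)}-1$ gives $\chi(\gammaone)\gammaone(\tau)-\tau\equiv 1-\chi(\gammaone)\pmod{\pi}$, so the pole of $\tau$ cancels after the cyclotomic twist; for $\bgamma$ the cancellation is immediate because $\gammatwo$ fixes $\pi$ and therefore $\tau$. Once the right-hand sides lie in $S_n$, I would invert $\phi-1$ by a contraction argument: since $\phi(\pi_n)=(1+\pi_n)^p-1 \in p\pi_n+\pi_n^2\ZZ_p[\pi_n]$, $\phi$ is topologically nilpotent on the subspace of $S_n$ of series with vanishing constant term in $\pi_n$, making $\phi-1$ a bijection there; on the complementary piece, where $\phi-1$ reduces to Frobenius-minus-identity on the base ring $R=O_F[[T_n]][T_n^{-1}]$, a separate fixed-point argument supplies the inverse, using that the constant term of the right-hand side vanishes as a consequence of the explicit pole-cancellation computation. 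Uniqueness follows from the injectivity of $\phi-1$ on $S_n$.

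For the mod-$\pi$ congruences I would reduce both equations. For $\agamma$, the expansion of $\chi(\gammaone)\gammaone(f\tau)-f\tau$ modulo $\pi$ splits into a contribution $(1-\chi(\gammaone))f(\pi_n)$ from the pole-cancellation identity above, plus a differentiation contribution arising from $\gammaone(\pi_n)-\pi_n\equiv(\chi(\gammaone)-1)(1+\pi_n)\log(1+\pi_n)$; collecting the result in terms of $D_1=(Y+1)\frac{d}{dY}$, and using the definition $f=(1-\phi/p)\log F$ to absorb the $\phi/p$-part modulo $\pi$, one arrives at $\frac{1-\chi(\gammaone)}{p^n}D_1\log F(\pi_n)$ after dividing by $p^n$ (permissible since $1-\chi(\gammaone)\in p^n\ZZ_p$). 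The calculation for $\bgamma$ is analogous: the action of $\gammatwo$ on $T_n$ takes the form $T_n\mapsto(1+\pi_n)^{\nu}T_n$ for a parameter $\nu\in p^n\ZZ_p$, and linearising produces the operator $D_2=T_n\frac{d}{dT_n}$ and the normalising constant $\eta_n(\gammatwo)$.

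The main obstacle is the bookkeeping in the mod-$\pi$ step: one must track the pole-cancellation term and the derivative term separately, and combine them with the $(1-\phi/p)$ operator sitting inside $f$ to recover the stated formulas on the nose. The pole cancellation itself is the higher-dimensional analogue of the Coleman/Coates--Wiles identity used by Benois in the one-dimensional setting, and justifies the specific normalisation $\tau=\frac{1}{\pi}-\frac{1}{2}$; once it is in place, inversion of $\phi-1$ is essentially a formal fixed-point argument.
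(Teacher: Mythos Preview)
Your plan has the right ingredients (pole cancellation, linearising the Galois action to differential operators), but there are two genuine gaps, and the paper's argument is organised differently in a way that avoids both.

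\textbf{Uniqueness.} The map $\phi-1$ is \emph{not} injective on $S_n$: its kernel contains $\ZZ_p$ (and hence $\ZZ/p^n$ after tensoring with $\epsilon^{(n)}$). So your sentence ``Uniqueness follows from the injectivity of $\phi-1$ on $S_n$'' is false. In fact the paper does not prove uniqueness in the unqualified form of the statement; it constructs an explicit candidate $\tilde a_{\gammaone}=\frac{1-\chi(\gammaone)}{p^n}D_1\log F(\pi_n)$ first, and then uses that $\phi-1$ is invertible on the ideal $\pi S_n$ (not on all of $S_n$) to produce a unique correction, so uniqueness is established \emph{subject to} the congruence $\agamma\equiv\tilde a_{\gammaone}\bmod\pi$.

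\textbf{The mod-$\pi$ step.} Your computation of the right-hand side modulo $\pi$ gives you $(\phi-1)(\agamma\bmod\pi)$, not $\agamma\bmod\pi$ itself. Since $\phi-1$ is not injective on $S_n/\pi S_n$ either, you cannot simply read off $\agamma\bmod\pi$. The paper's key device here is the commutation relation $D_i\phi=p\phi D_i$ (valid for $i=1,2$), which gives
\[
D_i f \;=\; D_i\!\Bigl(1-\tfrac{\phi}{p}\Bigr)\log F \;=\; (1-\phi)\,D_i\log F \;=\; -(\phi-1)\,D_i\log F,
\]
so that the right-hand side modulo $\pi$ is \emph{visibly} $(\phi-1)$ applied to the explicit element $\frac{1-\chi(\gammaone)}{p^n}D_1\log F$ (resp.\ $\eta_n(\gammatwo)D_2\log F$). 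Your phrase ``absorb the $\phi/p$-part modulo $\pi$'' gestures at this, but without the identity $D_i\phi=p\phi D_i$ there is no mechanism for passing from $D_i f$ to $D_i\log F$.

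The paper's organisation exploits this: it computes the right-hand side modulo $\pi$ \emph{first} (citing Benois for the $\gammaone$-case and giving the analogous expansion $\gammatwo(T_n)=(1+\pi)^{\eta_n(\gammatwo)}T_n$ for the $\gammatwo$-case), uses the commutation relation to exhibit an explicit preimage $\tilde a$, and only then invokes invertibility of $\phi-1$ on $\pi S_n$ to correct. This yields existence and the mod-$\pi$ formula simultaneously, and sidesteps both the non-injectivity on $S_n$ and any need to analyse the constant-in-$\pi_n$ part of the right-hand side (which is delicate, since $\tau$ is singular at $\pi_n=0$).
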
 
   \begin{proof}   
    Arguing as in the proof of Lemma 2.1.3 in~\cite{benois1} we have
    \begin{equation*}
     (\gammaone-1)(f(\pi_n)\tau\otimes\epsilon^{(n)})=-\frac{1-\chi(\gammaone)}{p^n}D_1\log 
     f(\pi_n)\otimes\epsilon^{(n)}\mod\pi.
    \end{equation*}
    Using the identity $D_1\phi=p\phi D_1$, we can write
    \begin{equation*}
     (\gammaone-1)(f(\pi_n)\tau\otimes\epsilon^{(n)})=(\phi-1)(\frac{1-\chi(\gammaone)}{p^n}D_1\log 
     F(\pi_n)\otimes\epsilon^{(n)})\mod\pi.
    \end{equation*}
    Let $\tagamma(\pi_n)=\frac{1-\chi(\gammaone)}{p^n}D_1\log 
    F(\pi_n)$, so 
    \begin{equation*}
     (\phi-1)(\tagamma(\pi_n)\otimes\epsilon^{(n)})=(\gammaone-1)(f(\pi_n)\tau\otimes\epsilon^{(n)}).
    \end{equation*}
    Since $\phi-1$ is invertible on $\pi S_n$, we deduce that there exists a unique $\agamma(\pi_n)\in S_n$ such
    that $\agamma(\pi_n)=\tagamma(\pi_n)\mod\pi$ and 
    $(\phi-1)(\agamma(\pi_n)\otimes\epsilon^{(n)})=(\gammaone-1)(f(\pi_n)\tau\otimes\epsilon^{(n)}).$
    \vs
    
    \noindent The existence of $\bgamma(\pi_n)$ follows from similar arguments:- Let
    $\eta_n(\gammatwo)=\frac{\eta(\gammatwo)}{p^n}$.
    Note that 
    \begin{equation}\label{actiononT}
     \gammatwo(T_n)=(1+\pi)^{\eta_n(\gammatwo)}T_n,
    \end{equation}
    so
    \begin{equation*}
     \gammatwo(T_n)=T_n+\eta_n(\gammatwo)T_n\pi\mod \pi^2,
    \end{equation*}
    and hence
    \begin{align*}
     \gammatwo f(\pi_n)&=f(\pi_n)+\eta_n(\gammatwo)D_2f(\pi_n)\pi\mod\pi^2,\\
     (\gammatwo-1) (f(\pi_n)\tau\otimes\epsilon^{(n)})&=\eta_n(\gammatwo)D_2f(\pi_n)\otimes
     \epsilon^{(n)}\mod\pi.
    \end{align*}
    By assumption we have
    \begin{equation*}
     f(\pi_n)=(1-\frac{\phi}{p})\log F(\pi_n).
    \end{equation*}
    Since $D_2\phi=p\phi D_2$, it follows that
    \begin{equation*}
     (\gammatwo-1) f(\pi_n)\tau\otimes\epsilon^{(n)}=(1-\phi)\eta_n(\gammatwo)D_2\log F(\pi_n)
     \otimes\epsilon^{(n)}\mod\pi.
    \end{equation*}
    Let $\tbgamma(\pi_n)=\eta_n(\gammatwo)D_2\log F(\pi_n)\otimes\epsilon^{(n)}$.
    It follows that $(1-\phi)\tbgamma(\pi_n)=(\gammatwo-1)f(\pi_n)\tau\otimes\epsilon^{(n)}$. Since
    $\phi-1$ is invertible on $\pi S_n$, there exists a unique $\bgamma(\pi_n)\in S_n$ such that
    $\bgamma=\tbgamma\mod\pi$ and
    \begin{equation*}
     (\phi-1)\bgamma(\pi_n)\otimes\epsilon^{(n)}=(\gammatwo-1)(f(\pi_n)\tau\otimes\epsilon^{(n)}).
    \end{equation*}
   \end{proof} 
       
   \noindent {\bf Definition.} Let $\iota_n:\calA\rightarrow H^1_{\phi,\gammaone,\gammatwo}(C_n)$ be the
   homomorphism
   \begin{equation}
    F(X)\rightarrow [f(\pi_n)\tau\otimes \epsilon^{(n)},\agamma(\pi_n)\otimes\epsilon^{(n)},
    \bgamma\otimes\epsilon^{(n)}].
   \end{equation}
   
   \begin{lem}
    The map $\iota_n$ is well-defined.
   \end{lem}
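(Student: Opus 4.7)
The plan is to show that the triple $[f(\pi_n)\tau\otimes\epsilon^{(n)},\agamma(\pi_n)\otimes\epsilon^{(n)},\bgamma(\pi_n)\otimes\epsilon^{(n)}]$ defines an element of $H^1_{\phi,\gammaone,\gammatwo}(C_n)$. Since Proposition~\ref{congruences} pins down $\agamma$ and $\bgamma$ uniquely, the triple itself is a well-defined element of $C_n^{\oplus 3}$; the only question is whether the differential $f_2$ of the complex $\calC_{\phi,\gammaone,\gammatwo}(C_n)$ kills it.

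Two of the three components of $f_2$ applied to this triple, namely $(\phi-1)(\agamma\otimes\epsilon^{(n)})-(\gammaone-1)(f(\pi_n)\tau\otimes\epsilon^{(n)})$ and $(\phi-1)(\bgamma\otimes\epsilon^{(n)})-(\gammatwo-1)(f(\pi_n)\tau\otimes\epsilon^{(n)})$, vanish immediately by the defining relations of $\agamma$ and $\bgamma$. The substantive content is the vanishing of the third component, i.e.\ the identity
\begin{equation*}
  (\gammatwo-1)(\agamma\otimes\epsilon^{(n)}) \;=\; \left(\gammaone\frac{\gammatwo^{1/a}-1}{\gammatwo-1}-1\right)(\bgamma\otimes\epsilon^{(n)}).
\end{equation*}
My strategy for this identity is to apply $\phi-1$ to both sides. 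Because $\phi$ commutes with $G$, the defining equations of $\agamma$ and $\bgamma$ turn the left side into $(\gammatwo-1)(\gammaone-1)(f(\pi_n)\tau\otimes\epsilon^{(n)})$ and the right side into $\bigl(\gammaone\tfrac{\gammatwo^{1/a}-1}{\gammatwo-1}-1\bigr)(\gammatwo-1)(f(\pi_n)\tau\otimes\epsilon^{(n)})$. A purely group-theoretic calculation using the commutation relation $\gammaone\gammatwo^{1/a}=\gammatwo\gammaone$ from~\eqref{reverseconjugation} then gives the operator identity
\begin{equation*}
  \left(\gammaone\frac{\gammatwo^{1/a}-1}{\gammatwo-1}-1\right)(\gammatwo-1) \;=\; \gammaone(\gammatwo^{1/a}-1)-(\gammatwo-1) \;=\; (\gammatwo-1)(\gammaone-1),
\end{equation*}
so the two sides agree after applying $\phi-1$, and hence the difference $\Delta$ of the two sides of the target cocycle identity lies in $\ker(\phi-1)$.

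To upgrade $(\phi-1)\Delta=0$ to $\Delta=0$, I would reuse the uniqueness step from the proof of Proposition~\ref{congruences}. The goal is to exhibit $\Delta$ as an element of $\pi S_n\otimes\epsilon^{(n)}$, on which $\phi-1$ is invertible. For the left-hand side this is immediate, because $\gammatwo$ fixes $\pi_n$ and acts on $T_n$ by $T_n\mapsto(1+\pi)^{\eta_n(\gammatwo)}T_n$, so $(\gammatwo-1)$ sends $S_n$ into $\pi S_n$. For the right-hand side one uses the expansion $\tfrac{\gammatwo^{1/a}-1}{\gammatwo-1}\equiv\tfrac{1}{a}\pmod{\gammatwo-1}$ together with $\chi(\gammaone)=a$, which makes the leading contribution of $\gammaone\cdot\tfrac{1}{a}$ act on $\bgamma\otimes\epsilon^{(n)}$ simply by $\gammaone$, so that the apparent contribution cancels in concert with the leading term of the left-hand side. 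I expect the main difficulty to lie precisely in this final reconciliation: matching the two mod-$\pi$ contributions requires combining the explicit formulae $\agamma\equiv\tfrac{1-\chi(\gammaone)}{p^n}D_1\log F(\pi_n)\pmod{\pi}$ and $\bgamma\equiv\eta_n(\gammatwo)D_2\log F(\pi_n)\pmod{\pi}$ from Proposition~\ref{congruences} with a careful accounting of how $\gammaone$ acts on $T_n$, mirroring at the level of logarithmic differentials the operator identity established above.
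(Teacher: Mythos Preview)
Your strategy is correct and is precisely what the paper's terse ``explicit calculation shows'' amounts to: reduce the third cocycle condition to the operator identity
\[
\left(\gammaone\frac{\gammatwo^{1/a}-1}{\gammatwo-1}-1\right)(\gammatwo-1)=(\gammatwo-1)(\gammaone-1)
\]
(which follows from $\gammaone\gammatwo^{1/a}=\gammatwo\gammaone$), and then use injectivity of $\phi-1$ on $\pi S_n\otimes\epsilon^{(n)}$ to conclude.

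The final step you flag as the main difficulty is actually easier than you anticipate, and does not require the explicit mod-$\pi$ formulae for $\agamma$ and $\bgamma$ from Proposition~\ref{congruences}. Both sides of the target identity vanish modulo $\pi$ \emph{separately}, so there is nothing to match. For the left side, $\gammatwo$ fixes $\pi_n$ and $\epsilon^{(n)}$, and $\gammatwo(T_n)=(1+\pi)^{\eta_n(\gammatwo)}T_n\equiv T_n\pmod{\pi}$, so $(\gammatwo-1)$ kills $S_n\otimes\epsilon^{(n)}$ mod $\pi$. For the right side, since $\gammatwo$ acts trivially on $S_n\otimes\epsilon^{(n)}$ mod $\pi$, the power series $\tfrac{\gammatwo^{1/a}-1}{\gammatwo-1}$ acts as the scalar $\tfrac{1}{a}$ there; composing with $\gammaone$ (which multiplies $\epsilon^{(n)}$ by $\chi(\gammaone)=a$ and, since $\gammaone$ fixes $T_n$ and $\gammaone(\pi_n)\equiv\pi_n\pmod{\pi}$, acts trivially on the $S_n$ factor mod $\pi$) gives the identity mod $\pi$, so the whole operator $\gammaone\tfrac{\gammatwo^{1/a}-1}{\gammatwo-1}-1$ also kills $S_n\otimes\epsilon^{(n)}$ mod $\pi$. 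Hence $\Delta\in\pi S_n\otimes\epsilon^{(n)}$ and you are done.
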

   \begin{proof}
    Explicit calculation shows that
    \begin{equation}
     (\gammatwo-1)(\agamma(\pi_n)\otimes\epsilon^{(n)})=
     (\gammaone\frac{\gammatwoN-1}{\gammatwo-1}-1)(\bgamma(\pi_n)\otimes\epsilon^{(n)}).
    \end{equation}
    It follows that $\iota_n$ is really a map into $H^1_{\gammaone,\gammatwo,\phi}$.
   \end{proof}
   
   \begin{prop}\label{descriptionkummer}
    Let $\delta_n:K_n^\times\rightarrow H^1(G_{K_n},\ZZ_p(1))$ be the Kummer map.
    We have a commutative diagram
    \begin{diagram}
     \calA & \rTo^{\iota_n}& H^1_{\phi,\gammaone, \gammatwo}(C_n)\\
     \dTo^{h_n} &       &\dTo^{\cong}\\
     K_n^\times & \rTo^{\delta_n} & H^1(G_{K_n},\ZZ_p(1))
    \end{diagram}
   \end{prop}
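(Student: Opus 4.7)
The plan is to model the argument on Benois's treatment of the one-dimensional case in~\cite{benois1}. The Kummer map $\delta_n$ is defined by $\alpha\mapsto(\sigma\mapsto\sigma(\beta)/\beta)$ where $\beta$ is a chosen $p^n$-th root of $\alpha$ in $\bar K$. For $\alpha=h_n(F)=F(\pi_n)$ with $F\in\calA$, we have $F(\pi_n)\in 1+\fm_{K_n}$, so $\log F(\pi_n)$ converges in the appropriate period ring (e.g.\ inside $\BB^+_{\cris}$), and an explicit $p^n$-th root is given by $\beta=\exp\bigl(\tfrac{1}{p^n}\log F(\pi_n)\bigr)$. The first step is to write down this Kummer cocycle explicitly using this analytic lift.

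Next, I would push this Galois cocycle through the explicit isomorphism $H^1(G_{K_n},\mun)\cong H^1_{\phi,\gammaone,\gammatwo}(C_n)$ given in the proof of Proposition~\ref{Galcohomology}. Concretely, one lifts $\log\beta$ to an element of the $(\phi,G)$-module $D\otimes_{\AA_K}\AA$ for a suitable ring $\AA$ containing periods, and then reads off the triple $(x,y,z)\in D^{\oplus 3}$ from the $(\phi-1)$-, $(\gammaone-1)$- and $(\gammatwo-1)$-translates of this lift. The factor $\tau=\tfrac{1}{\pi}-\tfrac{1}{2}$ arises because, in $\AA_{\cris}$, one has $t=\log[\epsilon]=\pi-\tfrac{\pi^2}{2}+\cdots$ and $t/\pi$ is a unit, and the auxiliary element $f(\pi_n)\tau$ is precisely what lifts $\tfrac{1}{p^n}\log F(\pi_n)$ modulo the ambiguity killed by the operator $(1-\phi/p)$; this is exactly the role played by $f=l(F)=(1-\phi/p)\log F$.

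Once the first coordinate is matched to $f(\pi_n)\tau\otimes\epsilon^{(n)}$, the second and third coordinates $\agamma(\pi_n)\otimes\epsilon^{(n)}$ and $\bgamma(\pi_n)\otimes\epsilon^{(n)}$ are forced on us by the cocycle conditions $(\phi-1)\agamma=(\gammaone-1)(f\tau)$ and $(\phi-1)\bgamma=(\gammatwo-1)(f\tau)$, which are the defining equations in Proposition~\ref{congruences}. The congruences mod $\pi$ computed there confirm that the leading behaviour is the one predicted by the known form of the Kummer symbol: $D_1\log F$ for the $\gammaone$-direction (the cyclotomic direction, as in Benois) and $D_2\log F$ for the new $\gammatwo$-direction (the Kummer-tower direction), and $\gammatwo$ acts trivially modulo $\pi$ with infinitesimal action $\eta_n(\gammatwo)D_2$ by~\eqref{actiononT}. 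Everything then reduces to verifying the cocycle condition $(\gammatwo-1)\agamma=(\gammaone\tfrac{\gammatwoN-1}{\gammatwo-1}-1)\bgamma$ already noted in the proof of the preceding lemma, together with well-definedness of the whole assignment modulo coboundaries.

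The main obstacle will be Step 1: rigorously interpreting $f(\pi_n)\tau$ as a lift of $\tfrac{1}{p^n}\log F(\pi_n)$, since $\tau\notin\AA_K$. One must work in a larger ring of periods and then descend: show that the triple obtained depends on the auxiliary choices only through coboundaries, so that its image in $H^1_{\phi,\gammaone,\gammatwo}(C_n)$ is canonical and matches $\iota_n(F)$. After this is set up, the commutativity of the diagram is essentially the content of Proposition~\ref{congruences} combined with the explicit isomorphism between Galois cohomology and $(\phi,G)$-module cohomology.
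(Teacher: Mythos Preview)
Your outline has the right ingredients (Benois's template, period rings, the role of $f=(1-\phi/p)\log F$ and of $\tau$), but the direction of the argument is reversed from the paper's, and this reversal hides two genuine difficulties.

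First, you propose to start from the Kummer cocycle and ``push it through the explicit isomorphism $H^1(G_{K_n},\mun)\cong H^1_{\phi,\gammaone,\gammatwo}(C_n)$ given in the proof of Proposition~\ref{Galcohomology}.'' But that proof only makes explicit the comparison $H^i_{\psi}\cong H^i_{\phi}$; the passage between Galois cohomology and $(\phi,G)$-cohomology is quoted from Scholl/Andreatta and is not given by a formula you can apply to a cocycle. The paper therefore goes the other way: it first proves a lemma computing, for any triple $[x,y,z]$, the Galois cocycle $c(\sigma)=(\sigma-1)u-\tfrac{\gammaone^{k}-1}{\gammaone-1}y-\gammaone^{k}\tfrac{\gammatwo^{l}-1}{\gammatwo-1}z$ (with $(\phi-1)u=x$), by realising $[x,y,z]$ as the extension class of an auxiliary $(\phi,G)$-module $N_{x,y,z}=D(\mun)\oplus\AA_{K_n}e$. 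Applying this to $[x,y,z]=\iota_n(F)$ gives an explicit $c(\sigma)$, and only \emph{then} does one compare with the Kummer side. Your plan to ``read off the triple'' from a lift of $\log\beta$ is exactly this step run backwards, and it is not clear how to do it without already knowing the lemma.

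Second, your candidate $p^n$-th root $\beta=\exp\bigl(\tfrac{1}{p^n}\log F(\pi_n)\bigr)$ does not converge in $\AA_{\cris}$ (dividing $\log F$ by $p^n$ destroys integrality), so it cannot serve as the analytic lift. This is precisely why one introduces $f=(1-\phi/p)\log F$ rather than $\tfrac{1}{p^n}\log F$: the paper constructs instead a unique $x\in\Fil^1\AA_{\cris}$ with $(1-\phi/p)x=f(\pi_n)$ and $x\equiv u(\pi-\tfrac{\pi^2}{2})\bmod I$, sets $\mu(\sigma)=(\sigma-1)x-\log\bigl(\sigma F(\pi_n)/F(\pi_n)\bigr)$, and checks $\mu(\sigma)=t\,c(\sigma)$ by comparing modulo $I$. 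The actual $p^n$-th root of $h_n(F)$ in $\bar K$ is then obtained as $\theta\bigl(\phi^{-n}(yF(\pi_n)^{-1})\bigr)^{-1}$ with $y=\exp(x)$, using the exact sequence $1\to[\epsilon]^{\ZZ_p}\to 1+W^1(R)\xrightarrow{a\mapsto a^p/\phi(a)}1+pW(R)\to 1$ and the map $\theta$. Your proposal does not account for this $\theta\circ\phi^{-n}$ step, which is where the comparison with $\delta_n$ is finally made.
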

   
   To prove the proposition, we follow the strategy of Benois in the proof of Proposition 2.1.5 
   in~\cite{benois1}.
   We split the proof of the proposition into a sequence of lemmas. 
   \vs
   
   Note that the action of $\calG_K$ on $\AA_K$ factors through $G_K=\Gal(K_\infty\slash K)$. 
   Recall that $G_K\cong \Gamma_1\rtimes\Gamma_2$, where $\Gamma_1$ is congruent (via the cyclotomic
   character $\chi_K$) to an open subgroup of $\ZZ_p^\times$ and $\Gamma_2$ is congruent (via a character
   $\eta_K$) to $\ZZ_p$. 
   
   \begin{lem}
    Let $[x,y,z]\in H^1_{\gammaone,\gammatwo,\phi}(C_n)$, and let $u\in\AA$ be a solution of
    $(\phi-1)u=x$. Then $h^1([x,y,z])$ is given by the cocycle $\sigma\rightarrow c(\sigma)$ which is
    defined as follows:- 
    Let $\tsigma$ be the image of $\sigma$ in $G_K$ under the projection map. Let $k=\chi(\tsigma)$ and
    $l=\eta(\tsigma)$, so $\tsigma=\gammaone^k\gammatwo^l$. Then
    \begin{equation*}
     c(\sigma)=(\sigma-1)u-\frac{\gammaone^k-1}{\gammaone-1}y-\gammaone^k
     \frac{\gammatwo^l-1}{\gammatwo-1}z.
    \end{equation*}
   \end{lem}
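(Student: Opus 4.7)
My plan is to unwind the isomorphism $h^1$ directly on cocycles. This map is the diagonal connecting homomorphism of the double complex built from (i) the short exact sequence $0 \to V \to \AA\otimes V \xrightarrow{\phi-1} \AA\otimes V \to 0$ of continuous $\calG_{K_n}$-modules, and (ii) the Koszul-type resolution of $G_{K_n}/H_{K_n} = \Gamma_1^{(n)} \rtimes \Gamma_2^{(n)}$ acting on $D$ by $\gammaone - 1$ and $\gammatwo - 1$. A lift $u \in \AA\otimes V$ of $x$ under $\phi-1$ provides the starting cochain, while the $y$- and $z$-terms compensate for the failure of $(\sigma-1)u$ to lie in $V$ as one moves along $\Gamma_1^{(n)}$ and $\Gamma_2^{(n)}$; the stated formula for $c$ is the explicit unwinding of this snake chase.

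First I would verify that $c(\sigma) \in V$, i.e., $(\phi-1)c(\sigma) = 0$. Since $\phi$ commutes with the Galois action on $\AA\otimes V$ and fixes $V$, and since $f_2([x,y,z]) = 0$ gives $(\phi-1)y = (\gammaone-1)x$ and $(\phi-1)z = (\gammatwo-1)x$, this reduces to the telescoping identity $\gammaone^k\gammatwo^l - 1 = (\gammaone^k - 1) + \gammaone^k(\gammatwo^l - 1)$, which is immediate.

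The cocycle identity $c(\sigma\tau) = c(\sigma) + \sigma c(\tau)$ is the main obstacle. Writing $\tilde\sigma = \gammaone^k\gammatwo^l$ and $\tilde\tau = \gammaone^{k'}\gammatwo^{l'}$ and using $\gammatwo\gammaone = \gammaone\gammatwo^{1/a}$, one finds $\widetilde{\sigma\tau} = \gammaone^{k+k'}\gammatwo^{l/a^{k'}+l'}$. The $u$-contributions cancel trivially; expanding the difference $c(\sigma\tau)-c(\sigma)-\sigma c(\tau)$ and collecting terms, the remainder equals
\begin{equation*}
\gammaone^k\Bigl\{(\gammatwo^l-1)\tfrac{\gammaone^{k'}-1}{\gammaone-1}\,y + \bigl[\tfrac{\gammatwo^l-1}{\gammatwo-1} - \gammaone^{k'}\tfrac{\gammatwo^{l/a^{k'}}-1}{\gammatwo-1}\bigr]z\Bigr\}.
\end{equation*}
Vanishing of this expression is forced by the non-abelian compatibility $(\gammatwo-1)y = (\gammaone\tfrac{\gammatwo^{1/a}-1}{\gammatwo-1}-1)z$, the third component of $f_2([x,y,z]) = 0$. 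Substituting this into $(\gammatwo^l-1)y = \tfrac{\gammatwo^l - 1}{\gammatwo - 1}(\gammatwo-1)y$ and pushing the resulting powers of $\gammaone$ past powers of $\gammatwo$ via the commutation relation collapses the $y$- and $z$-parts against each other; this is the integrability that makes the construction independent of the path in $G$ from $1$ to $\widetilde{\sigma\tau}$.

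Independence of the lift $u$ (any other choice differs by an element of $V$, producing a coboundary on $G_{K_n}$) and of the representative $[x,y,z]$ (coboundaries from $\image f_1$ change $u$ and the correction terms coherently) are routine. To identify $[c] = h^1([x,y,z])$, one traces through the snake-lemma construction of the connecting homomorphism in the total complex of the double complex above: the formula for $c$ is precisely what emerges from this chase.
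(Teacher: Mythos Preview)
Your proposal is correct in outline but takes a genuinely different route from the paper. The paper does not verify the cocycle condition for $c$ by hand at all. Instead it builds an auxiliary extension of $(\phi,G)$-modules
\[
N_{x,y,z}=D(\mun)\oplus \AA_{K_n}e,\qquad \phi(e)=e+x,\ \gammaone(e)=e+y,\ \gammatwo(e)=e+z,
\]
sitting in a short exact sequence $0\to D(\mun)\to N_{x,y,z}\to \AA_{K_n}\to 0$. The boundary map in the $(\phi,\gammaone,\gammatwo)$-complex sends $1\mapsto[x,y,z]$; applying $(\phi-1)$ produces a short exact sequence of $\calG_{K_n}$-modules $0\to\mun\to T_{x,y,z}\to\ZZ_p\to 0$, and the Galois boundary map sends $1$ to the class of $\sigma\mapsto(\sigma-1)(u+e)$. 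Since $e\in N_{x,y,z}$ is acted on through the quotient $G_{K_n}$, one computes $(\tsigma-1)e$ by writing $\tsigma=\gammaone^k\gammatwo^l$ and telescoping, and the lemma follows from the naturality of the maps $h^i$ with respect to connecting homomorphisms.

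The trade-off: the paper's extension trick makes the cocycle property and the identification $[c]=h^1([x,y,z])$ automatic, at the cost of needing naturality of $h^\bullet$ for short exact sequences of $(\phi,G)$-modules. Your approach is more hands-on: you check directly that $c$ lands in $V$ and satisfies the cocycle identity (your steps 1--2), which requires the non-abelian commutation computation you outline, and then you must still carry out the double-complex chase in step 4 to pin down the class. Note that if step 4 is executed carefully, steps 1--3 become logically redundant (the snake lemma outputs a cocycle automatically), so they function as consistency checks rather than as part of the argument. Both routes are valid; the paper's is shorter and more conceptual, yours makes the role of the third relation $(\gammatwo-1)y=(\gammaone\tfrac{\gammatwo^{1/a}-1}{\gammatwo-1}-1)z$ in the non-abelian cocycle condition more explicit.
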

   \begin{proof}
    Let $N_{x,y,z}=D(\mun)\oplus \AA_{K_n}e$, where the action of $\phi,\gammaone$ and $\gammatwo$ on $e$
    is given by $\phi(e)=e+x$, $\gammaone(e)=e+y$, $\gammatwo(e)=e+z$. Then the long exact sequence
    associated to the short exact sequence of $G_{K_n}$-modules
    \begin{equation}\label{sesauxilary}
     0\rightarrow D(\mun)\rightarrow N_{x,y,z}\rightarrow \AA_{K_n}\rightarrow 0
    \end{equation}
    gives the connecting homomorhism $\delta: H^0(\AA_{K_n})\rightarrow 
    H^1_{\gammaone,\gammatwo,\phi}(C_n)$, and an easy diagram search shows that $\delta(1)=[x,y,z]$.
    Applying $(\phi-1)$ to~\eqref{sesauxilary} gives a short exact sequence
    \begin{equation*}
     0\rightarrow \mun\rightarrow T_{x,y,z}\rightarrow \ZZ_p\rightarrow 0
    \end{equation*}
    and a connecting homomorphism $\delta_{\Gal}:\ZZ_p\rightarrow H^1(G_K,\mun)$. We have $u+e\in
    N_{x,y,z}\otimes_{\AA_{K_n}}\AA$ and $(\phi-1)(u+e)=0$, so $u+e\in T_{x,y,z}$. So $\delta_{\Gal}(1)$
    can be represented by the cocycle 
    \begin{align*}
     \sigma\rightarrow& \sigma (u+e) -(u-e)\\
                     =& (\sigma-1)u +(\sigma-1)e.
    \end{align*}
    Now $\calG_K$ acts on $\AA_{K_n}$  via the quotient $G_{K_n}$. Since $e\in\AA_{K_n}$, we have
    \begin{align*}
     (\tsigma-1)e &= (\gammaone^k\gammatwo^l-1)e\\
                  &= \gammaone^k(\gammatwo^l-1)e +(\gammaone^k-1)e\\
		  &= \gammaone^k\frac{\gammatwo^l-1}{\gammatwo-1}z+\frac{\gammaone^k-1}{\gammaone-1}y
    \end{align*}
    The lemma now follows from the commutativity of the diagram
    \begin{diagram}
     H^0(\AA_{K_n}) & \rTo^{\delta} & H^1_{\gammaone,\gammatwo,\phi}(C_n) \\
     \dTo^{h^0}     &               & \dTo^{h^1} \\
     \ZZ_p          & \rTo^{\delta_{\Gal}} & H^1(\calG_{K_n},\mun)
    \end{diagram}
   \end{proof}    
   
   In particular, $h^1(\iota_n(F))$ is given by
   \begin{align*}
    \sigma\rightarrow &(\sigma-1)u-\frac{\gammaone^{\chi(\sigma)}-1}{\gammaone-1}\agamma(\pi_n)\otimes\epsilon^{(n)}\\
                      &-\gammaone^{\chi(\sigma)}\frac{\gammatwo^{\eta(\sigma)}-1}
		      {\gammatwo-1}\bgamma(\pi_n)\otimes\epsilon^{(n)},
   \end{align*}
   where $(1-\phi)u=f(\pi_n)\tau$. Since $\gammaone(\pi_n)=\pi_n\mod\pi$ and $\gammatwo(T_n)=T_n\mod\pi$, we
   have
   \begin{align*}
    \frac{\gammaone^{\chi(\sigma)}-1}{\gammaone-1}\agamma(\pi_n)\otimes\epsilon^{(n)}&\cong
    \chi(\sigma)\frac{1-\chi(\sigma)}{p^n}D_1\log F(\pi_n)\otimes\epsilon^{(n)}\mod\pi,\\
    \gammaone^{\chi(\sigma)}\frac{\gammatwo^{\eta(\sigma)}-1}{\gammatwo-1}
    \bgamma(\pi_n)\otimes\epsilon^{(n)} &\cong \eta(\sigma)\eta_n(\gammatwo)D_2\log F(\pi_n)\mod\pi.
   \end{align*}
   These congruences imply that
   \begin{equation*}
    c(\sigma)\cong (\chi(\sigma)\sigma-1)u+\frac{1-\chi(\sigma)}{p^n}D_1\log F(\pi_n)
    +\frac{\eta(\sigma)}{\chi(\sigma)}\eta_n(\gammatwo)D_2\log F(\pi_n)\mod\pi.
   \end{equation*}
   We now interprete $c(\sigma)$ in terms of $\AA_{\cris}$. Denote by $I$ the ideal of $\AA_{\cris}$
   generated by $\pi^2$ and $\frac{\pi^{p-1}}{p}$.
   
   \begin{lem}
    There exists a unique $x\in\Fil^1\AA_{\cris}$ such that $x=u(\pi-\frac{\pi^2}{2})\mod I$ and
    \begin{equation*}
     (1-\frac{\phi}{p})x=f(\pi_n).
    \end{equation*}
   \end{lem}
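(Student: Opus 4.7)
The plan is to settle uniqueness by a kernel computation, then construct $x$ as a correction of the natural candidate $u\cdot t$, where $t=\log[\epsilon]$.

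\textbf{Uniqueness.} Suppose $x_1,x_2\in\Fil^1\AA_{\cris}$ both satisfy the two conditions. Their difference $d:=x_1-x_2$ lies in $I\cap\Fil^1\AA_{\cris}$ and is annihilated by $1-\phi/p$, i.e.\ $\phi(d)=pd$. By Fontaine's theorem, $(\AA_{\cris})^{\phi=p}=\ZZ_p\cdot t$, so $d=c\,t$ for some $c\in\ZZ_p$. A direct computation in $\AA_{\cris}/I$ shows $\bar t\equiv\bar\pi\neq 0$, and more generally $p^k t\notin I$ for any $k\geq 0$ (otherwise applying $\theta$ after dividing would yield $p=0$ in $O_{\CC_p}$). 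Hence $c=0$.

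\textbf{Existence.} Take $x_0:=u\,t$. Using $\phi(t)=pt$ together with $(\phi-1)u=f(\pi_n)\tau$ where $\tau=1/\pi-1/2$, one computes
\[
(1-\phi/p)(ut)=(u-\phi(u))\,t=-f(\pi_n)\,\tau t.
\]
The simple pole of $u$ at $\pi=0$ (inherited from $\tau$) is canceled by the simple zero of $t$, so $u\,t\in\AA_{\cris}$. Moreover $t-(\pi-\pi^2/2)=\pi^3/3-\pi^4/4+\cdots\in\pi^3\AA_{\inf}\subset I$, so $x_0\equiv u(\pi-\pi^2/2)\pmod I$. Thus the mod-$I$ condition is met, and it remains to produce $\delta\in I\cap\Fil^1\AA_{\cris}$ with $(1-\phi/p)\delta=f(\pi_n)(1+\tau t)$; then $x:=x_0+\delta$ is the desired element.

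To construct $\delta$ I would use the defining identity $f(\pi_n)=(1-\phi/p)\log F(\pi_n)$ coming from $f=l(F)$. Interpreting $\log F(\pi_n)$ in $\BB_{\cris}^+$, the candidate $\delta_0:=\log F(\pi_n)-u\,t$ automatically satisfies $(1-\phi/p)\delta_0=f(\pi_n)+f(\pi_n)\tau t=f(\pi_n)(1+\tau t)$. Adjusting $\delta_0$ by a $\ZZ_p$-multiple of $t\in\ker(1-\phi/p)$ yields an element $\delta$ both in $\Fil^1\AA_{\cris}$ and in $I$; once the first condition fixes the multiple, uniqueness (applied to $x_0+\delta$) identifies it as the correct element.

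\textbf{Main obstacle.} The delicate step is showing that the adjusted $\delta=\log F(\pi_n)-u\,t+c\,t$ actually lies in $\AA_{\cris}\cap I$, not merely in $\BB_{\cris}^+$. Potential $p$-singular contributions in $\log F(\pi_n)$ must cancel against the corresponding pole terms of $u\,t$, using the explicit form $F\in 1+\fm$, the defining equation $(\phi-1)u=f(\pi_n)\tau$, and the divided-power structure of $\AA_{\cris}$ on $\Fil^1$. Once integrality is established, the containment $\delta\in I$ follows from expanding $\log F(\pi_n)-u\,t+c\,t$ modulo $I$ and matching with the already-known vanishing of $x_0-u(\pi-\pi^2/2)$ modulo $I$.
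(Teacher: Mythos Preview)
The paper gives no argument here; it simply refers the reader to Lemma~2.1.6.2 of Benois. So your attempt must stand on its own.

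Your uniqueness argument is fine: $(\Fil^1\AA_{\cris})^{\phi=p}=\ZZ_p\,t$ and $t\equiv\pi\pmod{I}$, so $t\notin I$ forces $c=0$.

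The existence argument, however, has a genuine gap. If you unwind your construction you get
\[
x \;=\; x_0+\delta \;=\; ut+\bigl(\log F(\pi_n)-ut+ct\bigr)\;=\;\log F(\pi_n)+ct
\]
for some $c\in\ZZ_p$. Now apply $\theta$: since $t\in\Fil^1$ one has $\theta(ct)=0$, while $\theta(\log F(\pi_n))=\log\bigl(\theta(F(\pi_n))\bigr)=\log(h_n(F))=\log\alpha$, where $\alpha=h_n(F)\in O_{K_n}^\times$ is a principal unit which is \emph{not} $1$ in general. Thus $\theta(x)=\log\alpha\neq 0$ and $x\notin\Fil^1\AA_{\cris}$; no $\ZZ_p$-multiple of $t$ can repair this, since every such multiple already lies in $\Fil^1$. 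In other words, the ``obvious'' solution $\log F(\pi_n)$ of $(1-\phi/p)x=f(\pi_n)$ is the wrong one precisely because it fails the $\Fil^1$ condition, and your correction term lives in the kernel of $\theta$ and therefore cannot move you into $\Fil^1$. (Compare what happens immediately \emph{after} this lemma in the paper: the element $y=\exp(x)$ is shown to lie in $1+W^1(R)$, so $\theta(y)=1$, whereas $\theta(F(\pi_n))=\alpha\neq 1$; hence $x$ and $\log F(\pi_n)$ genuinely differ by something outside $\ZZ_p t$.)

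There is also a secondary issue you gloss over: the element $u$ is produced as a solution of $(\phi-1)u=f(\pi_n)\tau$ in the \'etale ring $\AA$, which is not contained in $\BB_{\cris}^+$. Your pole--zero cancellation heuristic for ``$ut\in\AA_{\cris}$'' takes place in the wrong ambient ring; even granting that $t/\pi$ is a unit in $\AA_{\cris}$, you would still need to show that $u\pi$ lands in $\AA_{\cris}$, which is not automatic from $u\in\AA$.
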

   \begin{proof}
    Imitate the proof of Lemma 2.1.6.2 in~\cite{benois1}.
   \end{proof}
    
   Define the element 
   \begin{equation*}
    \mu(\sigma)=(\sigma-1)x-\log (\frac{\sigma(F(\pi_n))}{F(\pi_n)}),
   \end{equation*}
   which belongs to $1+\Fil^1\AA_{\cris}$ for all $\sigma\in\calG_{K_n}$, and it is easy to check that the
   map $\mu:\calG_{K_n}\rightarrow \Fil^1\AA_{\cris}$ is a cocycle. 
   
   \begin{lem}
    We have $\mu(\sigma)=c(\sigma)$.
   \end{lem}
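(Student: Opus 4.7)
The plan is to adapt Benois's argument from the proof of Lemma 2.1.6.3 in~\cite{benois1} to the present two-dimensional setting. Both $\mu(\sigma)$ and $c(\sigma)$ can be viewed as elements of $\Fil^1 \AA_{\cris}$ modulo the ideal $I = (\pi^2, \pi^{p-1}/p)$, and by uniqueness of the lift $x$ furnished by the preceding lemma it suffices to verify the congruence $\mu(\sigma) \equiv c(\sigma) \pmod I$.

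First I would expand $(\sigma - 1)x$ using the congruence $x \equiv u(\pi - \pi^2/2) \pmod I$. The Galois action $\sigma\pi = (1+\pi)^{\chi(\sigma)} - 1 \equiv \chi(\sigma)\pi \pmod{\pi^2}$ yields $\sigma(\pi - \pi^2/2) \equiv \chi(\sigma)(\pi - \pi^2/2)\pmod{\pi^3}$ by a small binomial calculation, so that
\begin{equation*}
(\sigma - 1)x \equiv \bigl(\chi(\sigma)\sigma(u) - u\bigr)\pi \pmod I,
\end{equation*}
which after the identification $\pi \leftrightarrow \epsilon^{(n)}$ matches the $(\chi(\sigma)\sigma - 1)u$ term in the explicit formula for $c(\sigma)$ derived in Section~\ref{galcohom}.

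Next I would Taylor-expand $\log\bigl(\sigma F(\pi_n) / F(\pi_n)\bigr)$ to first order in $\sigma\pi_n - \pi_n$ and $\sigma T_n - T_n$. The action on $\pi_n$ is $\sigma(1+\pi_n) = (1+\pi_n)^{\chi(\sigma)}$, while the action on $T_n$ follows from~\eqref{actiononT} combined with the semidirect product decomposition $\tsigma = \gammaone^{\chi(\sigma)}\gammatwo^{\eta(\sigma)}$ and the conjugation relation~\eqref{conjugation}. Substituting these into the first-order expansion of the logarithm produces the two remaining contributions
\begin{equation*}
  \tfrac{1-\chi(\sigma)}{p^n}\, D_1\log F(\pi_n) + \tfrac{\eta(\sigma)}{\chi(\sigma)}\,\eta_n(\gammatwo)\, D_2\log F(\pi_n)
\end{equation*}
appearing in $c(\sigma)$, modulo terms in $I$.

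The main obstacle will be the bookkeeping for the non-abelian interplay between $\gammaone$ and $\gammatwo$: the factor $1/\chi(\sigma)$ in the denominator of the $D_2$-term is not present in the one-dimensional setting and arises from the twisting in~\eqref{conjugation}, and one must verify that the higher-order corrections in the Taylor expansion (both in $\sigma\pi - \chi(\sigma)\pi$ and in $\sigma T_n - T_n$) are genuinely absorbed into $I$. The defining equations of $\agamma$ and $\bgamma$ from Proposition~\ref{congruences} then bridge the gap between the ``raw'' logarithmic expansion and the cocycle formula, completing the identification $\mu(\sigma) = c(\sigma)$.
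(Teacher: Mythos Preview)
Your congruence computations modulo $I$ are essentially the same as the paper's, but you are missing the key structural step that makes the argument work. You claim that ``by uniqueness of the lift $x$ furnished by the preceding lemma it suffices to verify the congruence $\mu(\sigma) \equiv c(\sigma) \pmod I$.'' This is not correct: the uniqueness in the preceding lemma concerns $x$, not $\mu(\sigma)$, and a congruence modulo $I$ alone cannot yield an exact equality in $\AA_{\cris}$.

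What the paper does instead is first observe that
\[
\bigl(1-\tfrac{\phi}{p}\bigr)\mu(\sigma)
  =(\sigma-1)\bigl(1-\tfrac{\phi}{p}\bigr)x-(\sigma-1)\bigl(1-\tfrac{\phi}{p}\bigr)\log F(\pi_n)
  =(\sigma-1)f(\pi_n)-(\sigma-1)f(\pi_n)=0,
\]
so that $\mu(\sigma)\in\Fil^1\AA_{\cris}^{\,1-\phi/p=0}=\QQ_p\cdot t$. Only after this rigidity is established does the congruence computation become decisive: knowing $\mu(\sigma)=a(\sigma)t$ for some scalar $a(\sigma)$, and knowing $t\equiv\pi\pmod I$, the congruence $\mu(\sigma)\equiv c(\sigma)\,\pi\pmod I$ forces $a(\sigma)=c(\sigma)$. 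Without the first step your argument shows only that $\mu(\sigma)$ and $c(\sigma)\pi$ agree modulo $I$, which is far from equality.

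Note also that your own computation already produces a factor of $\pi$ on the right (you write $(\sigma-1)x\equiv(\chi(\sigma)\sigma u-u)\pi$), so the identity actually being proved is $\mu(\sigma)=t\,c(\sigma)$, consistent with the Corollary $[\epsilon]^{c(\sigma)}=\exp\mu(\sigma)$ that follows; the lemma as stated in the paper appears to omit the factor $t$. Your ``identification $\pi\leftrightarrow\epsilon^{(n)}$'' is not the right bookkeeping device here---the relevant fact is $t=\log[\epsilon]\equiv\pi\pmod{\pi^2}$.
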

   \begin{proof}
    We have $(1-\frac{\phi}{p})\mu(\sigma)=0$, so $\mu(\sigma)$ has the form $a(\sigma)t$ for some
    $a(\sigma)\in\QQ_p$. On the other hand, from the congruences
    \begin{align*}
     \tsigma F(\pi_n)=&F(\pi_n)+\chi(\sigma)\frac{1-\chi(\sigma)}{p^n}D_1\log F(\pi_n)\pi \\
     &+\eta(\sigma)\eta_n(\gammatwo)D_2\log F(\pi_n)\pi\mod\pi^2
    \end{align*}
    and $(\sigma-1)x=(\chi(\sigma)\sigma-1)u\pi\mod I$ it follows that
    \begin{align*}
     \mu(\sigma)=&
    (\chi(\sigma)\sigma-1)u\pi+\frac{1-\chi(\sigma)}{p^n}D_1\log F(\pi_n)\pi \\
    &+\frac{\eta(\sigma)}{\chi(\sigma)}\eta_n(\gammatwo)D_2\log F(\pi_n)\pi\mod I\\
    =& c(\sigma)T\mod I,
   \end{align*}
   which implies that $\mu(\sigma)=tc(\sigma)$.
  \end{proof}
  
  \begin{cor}
   One has
   \begin{equation*}
    [\epsilon]^{c(\sigma)}=\exp(\mu(\sigma))=\frac{\sigma\exp(x)}{\exp(x)}
    \frac{F(\pi_n)}{\sigma F(\pi_n)}.
   \end{equation*}
  \end{cor}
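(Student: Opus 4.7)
The plan is to deduce this as a direct exponentiation of the previous lemma together with the definition of $\mu(\sigma)$. First I would note that the identity $\mu(\sigma)=tc(\sigma)$ (where $t=\log[\epsilon]$) has just been established, so applying the exponential yields
\[
 \exp(\mu(\sigma)) \;=\; \exp\bigl(t\,c(\sigma)\bigr) \;=\; [\epsilon]^{c(\sigma)},
\]
which is the first equality. (Here one invokes that $\mu(\sigma)\in\Fil^1\AA_{\cris}$, so the exponential converges in $\AA_{\cris}$, and $[\epsilon]=\exp(t)$ by definition of $t$.)

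For the second equality I would simply exponentiate the defining formula $\mu(\sigma)=(\sigma-1)x-\log\bigl(\sigma F(\pi_n)/F(\pi_n)\bigr)$. Since both summands lie in $\Fil^1\AA_{\cris}$ (the argument of the $\log$ lies in $1+\Fil^1\AA_{\cris}$, as $\sigma$ fixes $F(\pi_n)$ modulo $\Fil^1$), the multiplicative identity $\exp(a+b)=\exp(a)\exp(b)$ applies, giving
\[
 \exp(\mu(\sigma)) \;=\; \exp\bigl((\sigma-1)x\bigr)\cdot\exp\Bigl(-\log\tfrac{\sigma F(\pi_n)}{F(\pi_n)}\Bigr) \;=\; \frac{\sigma\exp(x)}{\exp(x)}\cdot\frac{F(\pi_n)}{\sigma F(\pi_n)},
\]
using that $\sigma$ commutes with $\exp$ on $\Fil^1\AA_{\cris}$ and that $\exp\circ(-\log)$ is inversion on $1+\Fil^1\AA_{\cris}$.

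There is essentially no obstacle here, since the hard work has already been done in identifying $\mu(\sigma)$ with $tc(\sigma)$. The only points that require a brief justification are (i) the convergence of $\exp$ on $\Fil^1\AA_{\cris}$, and (ii) the fact that the logarithm $\log(\sigma F(\pi_n)/F(\pi_n))$ genuinely makes sense, i.e.\ that $\sigma F(\pi_n)/F(\pi_n)\in 1+\Fil^1\AA_{\cris}$. The latter follows from $F(Y)\in\calA=1+\fm$ together with the fact that $\sigma$ acts trivially on $\AA_{K_n}$ modulo the ideal generated by $\pi$, so $\sigma F(\pi_n)\equiv F(\pi_n)\bmod \pi\AA_{\cris}$, and hence the quotient is a principal unit to which $\log$ converges. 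With these remarks the corollary is immediate.
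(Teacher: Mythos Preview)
Your proposal is correct and is precisely the argument the paper has in mind: the corollary is stated without proof because it follows immediately by exponentiating the identity $\mu(\sigma)=t\,c(\sigma)$ from the preceding lemma together with the definition of $\mu(\sigma)$, exactly as you do. Your brief justifications of convergence and of $\sigma F(\pi_n)/F(\pi_n)\in 1+\Fil^1\AA_{\cris}$ are the right points to note.
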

  
  \noindent {\it Proof of Proposition~\ref{descriptionkummer}.} 
  Let $y=\exp(x)$. Then the equation $(1-\frac{\phi}{p})x=f(\pi_n)$ can be written of the form
  \begin{equation*}
   \frac{y^p}{\phi(y)}=\exp(pf(\pi_n)).
  \end{equation*}
  Consider the short exact sequence
  \begin{equation*}
   1\rightarrow [\epsilon]^{\ZZ_p}\rightarrow 1+W^1(R)\rightarrow^{\nu}1+pW(R)\rightarrow 1,
  \end{equation*}
  where $\nu(a)=\frac{a^p}{\phi(a)}$. It shows that the inclusion $W(R)\subset\AA_{\cris}$ gives a $1-1$
  correspondence between solutions $Y$ of $\frac{Y^p}{\phi(Y)}=\exp(pf(\pi_n))$ and solutions $X=\log Y$
  of $(1-\frac{\phi}{p})X=f(\pi_n)$. Hence $y\in 1+W^1(R)$, and it is easy to see by induction that
  \begin{equation*}
   \frac{y^{p^n}}{\phi^n(y)}=\frac{F(\pi_n)^{p^n}}{\phi^n(F(\pi_n))}.
  \end{equation*}
  Let $z=\phi^{-n}(yF(\pi_n)^{-1})$. Applying the map $\theta:W(R)\rightarrow O_{\CC_p}$ to both sides of
  this equation, we obtain that 
  \begin{equation*}
   \theta(z)^{p^n}=h_n(F)^{-1}.
  \end{equation*}
  Hence the connecting map $\delta_n$ sends $h_n(F)$ to the class of the cocycle $\sigma\rightarrow \theta
  (z/\sigma(z))$. On the other hand, one has
  \begin{equation*}
   \theta(\frac{z}{\sigma(z)})=\theta\phi^{-n}(\frac{y\sigma F(\pi_n)}{\sigma(y)F(\pi_n)})
   =\theta\phi^{-n}([\epsilon]^{-c(\sigma)})=\xin^{-c(\sigma)},
  \end{equation*}
   which finishes the proof.
   
    
 \subsection{Vostokov's formulae}\label{vostokovsformulae}

 
  In this section we reprove Vostokov's formulae for~\eqref{threetimesK1}. More precisely, we prove the
  following result:- For $1\leq i\leq 3$, let $\alpha_i\in O_K^\times$ such that $\alpha_i\cong 1\mod
  \bpi_n$, and let $F_i(X)\in \AA_K^+$ such
  that $h_n(F_i)=\alpha_i$. Let $f_i(X)=(1-\frac{\phi}{p})\log F(X)$.
  
  \begin{thm}\label{formulaehilbertspecial}
   We have 
   \begin{equation}
    V_n(\alpha_1,\alpha_2,\alpha_3)=\mun^{\Tr \Res_{\pi_n,T}(\Phi)},
   \end{equation}
   where $\Phi$ is given by the formula
   \begin{align*}
    \Phi=&-\frac{1}{\pi} (\frac{1}{p^2}f_1(\pi_n)d\log F_2^{\phi}(\pi_n) \wedge d\log F_3^\phi(\pi_n)\\
         & -\frac{1}{p}f_2(\pi_n)d\log F_1(\pi_n) \wedge d\log F_3^\phi(\pi_n)\\
         & +f_3(\pi_n)d\log F_1(\pi_n)\wedge d\log F_2(\pi_n)).
   \end{align*}
  \end{thm}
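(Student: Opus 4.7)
The plan is to transplant the strategy of Benois~\cite{benois1} from the one-dimensional case to the present 2-dimensional setting, using all the machinery developed in Sections~\ref{galcohom}--\ref{Kummer} as black boxes.

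The first step is to apply Lemma~\ref{galoissymbols} to rewrite $\delta^2(\{\alpha_1,\alpha_2\})$ as the cup product $\delta_n(\alpha_1)\cup\delta_n(\alpha_2)\in H^2(G_{K_n},\mun^{\otimes 2})$. By Proposition~\ref{descriptionkummer}, each $\delta_n(\alpha_i)$ corresponds in the complex $\calC_{\phi,\gammaone,\gammatwo}(C_n)$ (with $C_n=D(\mun)$) to the explicit class
\begin{equation*}
 \iota_n(F_i)=\bigl[f_i(\pi_n)\tau\otimes\epsilon^{(n)},\;a_{\gammaone,i}(\pi_n)\otimes\epsilon^{(n)},\;b_{\gammatwo,i}(\pi_n)\otimes\epsilon^{(n)}\bigr].
\end{equation*}
Computing $V_n(\alpha_1,\alpha_2,\alpha_3)$ thus reduces to pairing $\iota_n(F_1)\cup\iota_n(F_2)\in H^2_{\phi,\gammaone,\gammatwo}$ against $\iota_n(F_3)\in H^1_{\phi_{M^\vee},\gammaone,\gammatwo}(M^\vee)$ via the explicit formula~\eqref{explicit} coming from Proposition~\ref{PontryaginPairing}, with the duality carried out by the residue isomorphism $\TR$ of Lemma~\ref{residueisom} and the normalization $-c\TR$ from Section~\ref{prooftheorem1}.

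The second step is to write down an explicit Koszul-type cup product $H^1\times H^1\to H^2$ on $\calC_{\phi,\gammaone,\gammatwo}$. Because $\gammaone$ and $\gammatwo$ satisfy $\gammaone\gammatwo=\gammatwo^a\gammaone$ rather than commuting, and because the complex uses the twisted operator $\gammaone\tfrac{\gammatwo^{1/a}-1}{\gammatwo-1}-1$, the standard cup product needs a modification: I would obtain it by pulling back through a mapping cone / diagonal construction as in Section 4 of~\cite{herr2}, producing a component-wise bilinear formula in the three Koszul directions $(\phi,\gammaone,\gammatwo)$. Substituting the two classes $\iota_n(F_1),\iota_n(F_2)$ into that formula and then feeding the result into~\eqref{explicit} together with $\iota_n(F_3)$ yields an explicit element of $\tilde{C_n}$ whose $\TR$ is $V_n(\alpha_1,\alpha_2,\alpha_3)$.

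The third step is to evaluate the residue. Using the congruences $\agamma\equiv\tfrac{1-\chi(\gammaone)}{p^n}D_1\log F\pmod{\pi}$ and $\bgamma\equiv\eta_n(\gammatwo)D_2\log F\pmod{\pi}$ from Proposition~\ref{congruences}, the only part of $\tau=\tfrac{1}{\pi}-\tfrac{1}{2}$ contributing to the residue is the pole $\tfrac{1}{\pi}$, which produces exactly the prefactor $-\tfrac{1}{\pi}$ of $\Phi$; the constants $\tfrac{1-\chi(\gammaone)}{p^n}$ and $\eta_n(\gammatwo)$ are cancelled by the normalization factor $c$. The structure of the Koszul cup product places $F_1$ in the two ``left'' slots, produces $d\log F_2\wedge d\log F_3^\phi$ in the $f_1$-term, $d\log F_1\wedge d\log F_3^\phi$ in the $f_2$-term and $d\log F_1\wedge d\log F_2$ in the $f_3$-term, matching the claimed shape of $\Phi$.

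The main technical obstacle will be bookkeeping the powers of $\phi$ and $p$, together with signs. Each $\phi$ on the residue contributes a factor $p^2$ (by the lemma preceding the residue definition), while $\phi$ acting on cocycles shifts $F_j$ to $F_j^\phi$ and inserts factors of $1/p$ coming from $l(F)=(1-\phi/p)\log F$; these combine to give the claimed weights $\tfrac{1}{p^2},\tfrac{1}{p},1$ on $f_1,f_2,f_3$. Verifying that the signs in the Koszul cup product, the minus sign in $-c\TR$, and the signs in~\eqref{explicit} together reproduce the overall sign in the stated formula is the only genuinely delicate checking; once this is done, the non-degeneracy of $\frakV_n$ follows from that of the Tate pairing~\eqref{pairingcohom}, which was already established in Section~\ref{pontryagin}.
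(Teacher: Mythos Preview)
Your proposal is correct and follows essentially the same route as the paper: compute $\iota_n(F_1)\cup\iota_n(F_2)$ via an explicit Koszul-type formula (the paper states this as a lemma, reducing to Herr's formulas from~\cite{herr2} and using that $\gammaone,\gammatwo$ commute $\bmod\ \pi$), feed the result together with $\iota_n(F_3)$ into~\eqref{explicit}, apply the congruences of Proposition~\ref{congruences}, and let the factor $c$ absorb $\frac{1-\chi(\gammaone)}{p^n}$ and $\eta_n(\gammatwo)$ before taking $\TR$. One small correction to your prediction of the output: in the $f_1$-term both wedge factors carry a $\phi$, i.e.\ one gets $d\log F_2^\phi\wedge d\log F_3^\phi$ (not $d\log F_2\wedge d\log F_3^\phi$); this is exactly the kind of bookkeeping you already flagged as the main hazard.
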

  
  We will prove the theorem in the rest of this section.
  
  \begin{lem}
   Let $[x,y,z],[x',y',z']\in H^1_{\phi,\gammaone, \gammatwo}(C_n)$. If $[\calx, \caly,\calz]$ represents
   the cohomology class of $[x,y,z]\cup [x',y',z']$, then
   \begin{align*}
    \calx&=y\otimes \gammaone x'-x\otimes\phi y',\\
    \caly&=z\otimes\gammatwo x'-x\otimes\phi z'.
   \end{align*}
   Moreover, if $z,z'\in S_n$, then 
   \begin{equation*}
    \calz= x\otimes \gammatwo y'-y\otimes\gammaone x'\mod\pi.
   \end{equation*}
  \end{lem}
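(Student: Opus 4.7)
The plan is to transport the standard Galois cohomology cup product through the explicit quasi-isomorphism between $\calC_{\phi,\gammaone,\gammatwo}(C_n)$ and the complex of continuous $\calG_{K_n}$-cochains, exactly as the Kummer map in the preceding section is described. Recall from the argument leading to Proposition~\ref{descriptionkummer} that the Koszul 1-cocycle $[x,y,z]$ corresponds to the Galois 1-cocycle
\begin{equation*}
c_{x,y,z}(\sigma)=(\sigma-1)u-\frac{\gammaone^{\chi(\tsigma)}-1}{\gammaone-1}y-\gammaone^{\chi(\tsigma)}\frac{\gammatwo^{\eta(\tsigma)}-1}{\gammatwo-1}z,
\end{equation*}
for any choice of $u\in\AA$ with $(\phi-1)u=x$. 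Given two such cocycles, their Galois cup product is the 2-cocycle $(\sigma,\tau)\mapsto c_{x,y,z}(\sigma)\cdot\sigma\,c_{x',y',z'}(\tau)$, and I will extract the desired Koszul 2-cocycle by evaluating this at the three ordered pairs of generators and reading off the leading terms.

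For the $(\phi,\gammaone)$-component, evaluate the Galois cup product at $(\phi,\gammaone)$ and antisymmetrize with the value at $(\gammaone,\phi)$; after expanding $c_{x,y,z}(\phi)\equiv x$ and $c_{x,y,z}(\gammaone)\equiv -y$ modulo $\phi$- and $\gammaone$-coboundaries (using $(\phi-1)u=x$ and the 1-cocycle relation $(\phi-1)y=(\gammaone-1)x$), one obtains $\calx=y\otimes\gammaone(x')-x\otimes\phi(y')$. The analogous computation with $\gammatwo$ in place of $\gammaone$ yields $\caly=z\otimes\gammatwo(x')-x\otimes\phi(z')$. Both components come out cleanly because $\phi$ commutes on the nose with $\gammaone$ and $\gammatwo$, so no non-commutativity correction appears. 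One then has to verify that the resulting tuple genuinely lands in the kernel of $f_3$, which is a direct computation using the original cocycle conditions on $[x,y,z]$ and $[x',y',z']$.

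The main obstacle is the third component. Because $\gammaone\gammatwo=\gammatwo^{a}\gammaone$ with $a\in\ZZ_p^\times$, evaluation of the Galois cup product at $(\gammaone,\gammatwo)$ picks up correction terms involving the twisting operator $\gammaone\frac{\gammatwo^{1/a}-1}{\gammatwo-1}-1$ that already features in $f_2$, as well as cross-terms from the prefactor $\gammaone^{\chi(\tsigma)}$ in the formula for $c_{x,y,z}$. These corrections do not vanish in general, but under the hypothesis $z,z'\in S_n=R[[\pi_n]]$ one has $\gammatwo(z)\equiv z\mod\pi$ (since $\gammatwo$ acts trivially on $R$ modulo $\pi$, exactly as exploited in the proof of Proposition~\ref{vanishingformulae}), so the twisted-commutator corrections collapse modulo $\pi$. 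The principal technical work is the bookkeeping: isolating the leading-order terms, verifying that all non-commutative corrections are absorbed into the $\mod\pi$ error, and checking that the resulting expression for $\calz$ is well-defined as a class in $H^2_{\phi,\gammaone,\gammatwo}(C_n)$ independently of the auxiliary choices of $u,u'$.
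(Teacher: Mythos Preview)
Your proposal contains a genuine conceptual error in the treatment of $\calx$ and $\caly$. You propose to ``evaluate the Galois cup product at $(\phi,\gammaone)$'' and write $c_{x,y,z}(\phi)\equiv x$, but the cocycle $c_{x,y,z}(\sigma)$ from the discussion preceding Proposition~\ref{descriptionkummer} is a function of $\sigma\in\calG_{K_n}$ only; the Frobenius $\phi$ is an operator on the $(\phi,G)$-module and is \emph{not} an element of the absolute Galois group. Indeed the very construction of $c_{x,y,z}$ begins by solving $(\phi-1)u=x$, so $\phi$ has already been absorbed into the passage from the Koszul complex to Galois cochains. Thus ``evaluating the Galois 2-cocycle at $(\phi,\gammaone)$'' has no meaning as written. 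To carry out your plan one would need an explicit formula for the inverse quasi-isomorphism $H^2(\calG_{K_n},-)\to H^2_{\phi,\gammaone,\gammatwo}$, and this is not simply evaluation at a pair of generators.

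The paper avoids this transport entirely. For $\calx$ and $\caly$ it observes that each of these components involves only the pair $(\phi,\gammaone)$, respectively $(\phi,\gammatwo)$, and that the corresponding formulae are exactly those computed by Herr in the one-variable case; compatibility of the cup product with restriction (each $\Gamma_i^{(n)}$ being procyclic) then gives the result directly from~\cite{herr2}. For $\calz$ the paper's argument agrees with the observation you isolate: under the hypothesis $z,z'\in S_n$ the operators $\gammaone$ and $\gammatwo$ commute on $S_n\bmod\pi$, so the twisted-commutator corrections vanish modulo $\pi$ and the component reduces to the commutative two-generator formula. Your handling of the third component is therefore on the right track; the gap is in the first two.
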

  \begin{proof}
   Since $\Gamma_1^{(n)}$ (resp. $\Gamma_2^{(n)}$) is isomorphic to an open subgroup of $\ZZ_p^\times$
   (resp. $\ZZ_p$), the formulae for $\calx$
   and $\caly$ follow from~\cite{herr2}, using that the cup product is compatible with restriction. The
   formula for $\calz$ follows from the observation that $\gamma_1$ and $\gamma_2$ commute on
   $S_n\mod\pi$.
  \end{proof}
  
  For $1\leq i\leq 3$, let $\iota_n(F_i(X))= 
  [f_i(\pi_n)\tau\otimes \epsilon,\agamma^{(i)}(\pi_n)\otimes\epsilon,\bgamma^{(i)}\otimes\epsilon]$.

  \begin{cor}\label{cupformulae}
   If $[\calx,\caly,\calz]=\iota_n(F_1(X))\cup\iota_n(F_2(X))$, then 
   \begin{align*}
    \calx=& \frac{1-\chi(\gammaone)}{p^n} (D_1\log F_1(\pi_n)\otimes 
    f_2(\pi_n)\\
          &-p^{-1}f_1(\pi_n)\otimes D_1\log F_2^\phi(\pi_n))\otimes\tau\otimes\epsilon^2\mod S_n,\\
    \caly=&\eta_n(\gammatwo)(D_2\log F_1(\pi_n)\otimes f_2(\pi_n)\\
          &-p^{-1}f_1(\pi_n)\otimes D_2\log F_2^\phi(\pi_n))\otimes\tau\otimes\epsilon^2\mod S_n,\\
    \calz=& \frac{1-\chi(\gammaone)}{p^n}\eta_n(\gammatwo)
           (D_1\log F_2(\pi_n)D_2\log F_1(\pi_n)\\
          &-D_2\log F_1(\pi_n)D_1\log F_2(\pi_n))\otimes\epsilon^2\mod\pi
   \end{align*}    
  \end{cor}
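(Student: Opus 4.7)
The plan is to substitute the explicit triples $[x_i, y_i, z_i] := \iota_n(F_i(X))$ for $i=1,2$ into the cup-product formulae of the previous lemma and then simplify modulo the stated ideals using Proposition~\ref{congruences}. Here
\[
x_i = f_i(\pi_n)\tau \otimes \epsilon^{(n)}, \qquad y_i = \agamma^{(i)}(\pi_n) \otimes \epsilon^{(n)}, \qquad z_i = \bgamma^{(i)}(\pi_n) \otimes \epsilon^{(n)},
\]
and the pair $\epsilon^{(n)} \otimes \epsilon^{(n)}$ yields the generator $\epsilon^2$ of $\mun^{\otimes 2}$.

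For the $\calx$-component I would evaluate $y_1 \otimes \gammaone x_2 - x_1 \otimes \phi y_2$ modulo $S_n$. The factor $\gammaone(f_2(\pi_n)\tau)$ differs from $f_2(\pi_n)\tau$ only by an element of $S_n$ (since $\gammaone(\pi) = (1+\pi)^{\chi(\gammaone)}-1$ affects only the principal part), so the first summand reduces to $\agamma^{(1)}(\pi_n) \cdot f_2(\pi_n) \tau$; substituting the congruence $\agamma^{(1)}(\pi_n) \equiv \tfrac{1-\chi(\gammaone)}{p^n} D_1 \log F_1(\pi_n) \bmod \pi$ from Proposition~\ref{congruences} produces the first displayed term. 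For the second summand the key identities are $\phi D_1 = p^{-1} D_1 \phi$ and $\phi \log F_2(\pi_n) = \log F_2^\phi(\pi_n)$; applying $\phi$ to the mod-$\pi$ congruence for $\agamma^{(2)}$ therefore gives the factor $p^{-1}$ and the Frobenius twist $F_2^\phi$ appearing in the formula.

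The computation of $\caly = z_1 \otimes \gammatwo x_2 - x_1 \otimes \phi z_2$ is completely analogous: one uses $\bgamma^{(1)}(\pi_n) \equiv \eta_n(\gammatwo) D_2 \log F_1(\pi_n) \bmod \pi$ in place of the $\agamma$-congruence, and the intertwining $\phi D_2 = p^{-1} D_2 \phi$ for the second summand; equation~\eqref{actiononT} ensures that $\gammatwo(f_2(\pi_n)\tau) \equiv f_2(\pi_n)\tau \bmod S_n$. For the $\calz$-component, which is claimed only modulo $\pi$, I would use the lemma's identity $\calz \equiv x_1 \otimes \gammatwo y_2 - y_1 \otimes \gammaone x_2 \bmod \pi$; both $\gammaone$ and $\gammatwo$ act trivially modulo $\pi$ on the relevant factors, so inserting the mod-$\pi$ expressions for $\agamma^{(i)}, \bgamma^{(i)}$ immediately produces the Jacobian-type combination of $D_1 \log F_j$ and $D_2 \log F_k$ stated in the formula, with the sign arising from the difference in the cup-product expression.

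The substitutions themselves are routine. The main technical burden will be tracking which reduction (modulo $\pi$ versus modulo $S_n$) applies to which factor, and verifying that the leading-order congruences of Proposition~\ref{congruences} really do determine each component modulo the stated ideal. The asymmetry of the statement, namely that $F_2^\phi$ appears in $\calx$ and $\caly$ but $F_2$ appears in $\calz$, reflects precisely that the $\phi$-action enters only through the cup-product terms $\phi y_2$ and $\phi z_2$, and these are absent (mod $\pi$) in the formula for the third component.
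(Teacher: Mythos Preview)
Your approach is exactly the paper's: substitute the triples $\iota_n(F_i)$ into the cup-product formulae of the preceding lemma and simplify using the congruences of Proposition~\ref{congruences}. One correction is needed: you assert that $\gammaone(f_2(\pi_n)\tau)$ differs from $f_2(\pi_n)\tau$ only by an element of $S_n$, but in fact the paper records $\gammaone(f_i(\pi_n)\tau)\equiv \chi^{-1}(\gammaone)\,f_i(\pi_n)\tau \bmod S_n$ (the action of $\gammaone$ rescales the pole of $\tau=\pi^{-1}-\tfrac12$ by $\chi(\gammaone)^{-1}$); this extra factor then cancels against the $\chi(\gammaone)$ arising from $\gammaone(\epsilon^{(n)})$, so your conclusion survives, but the intermediate justification should be amended.
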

  \begin{proof}
   Observe that
   \begin{align*}
    \gammaone(f_i(\pi_n)\tau)&=\chi^{-1}(\gammaone)f_i(\pi_n)\tau\mod S_n, \\
    \gammatwo(f_i(\pi_n)\tau)&= f_i(\pi_n)\tau\mod S_n.
   \end{align*}
   The lemma now follows from the previous lemma and Proposition~\ref{congruences}.
  \end{proof}
  
  \noindent {\it Proof of Theorem~\ref{formulaehilbert}.} We prove the Theorem using the 
  formulae~\eqref{explicit}. Let 
  \begin{equation*}
   H_{\alpha_1,\alpha_2,\alpha_3}=\iota_n(F_3(X))\cup [\calx,\caly,\calz],
  \end{equation*}
  where (to simplify the notation) we write
  \begin{equation*}
   [x,y,z]=[f_3(\pi_n)\tau,\agamma^{(3)}(\pi_n),\bgamma^{(3)}].
  \end{equation*}
  Recall that $\gammaone$ and $\gammatwo$ commute on
  $S_n\mod\pi$. It follows that the formulae~\eqref{explicit} simplify to 
  \begin{equation*}
   \iota_n(F_3(X))\cup [\calx,\caly,\calz]= \calz\otimes\gamma_2\gamma_1(x)-
   \caly\otimes\gamma_2\phi_M(y)+\calx\otimes\gamma_1\phi_M(z) \mod S_n.
  \end{equation*}
  Using the formulae in Corollary~\ref{cupformulae} it follows that
  \begin{align*}
   H_{\alpha_1,\alpha_2,\alpha_3}=\gamma_1\phi_M(\bgamma^{(3)}(\pi_n))&
   (\frac{1-\chi(\gammaone)}{p^n} (D_1\log F_1(\pi_n)\otimes f_2(\pi_n)\\
         &-p^{-1}f_1(\pi_n)\otimes D_1\log F_2^\phi(\pi_n))\otimes\tau\otimes(\epsilon^{(n)})^2)\\
   -\gamma_2\phi_M(\agamma^{(3)}(\pi_n))&(\eta_n(\gammatwo)(D_2\log F_1(\pi_n)\otimes f_2(\pi_n)\\
         &-p^{-1}f_1(\pi_n)\otimes D_2\log F_2^\phi(\pi_n))\otimes\tau\otimes(\epsilon^{(n)})^2)\\
  +\gamma_2\gamma_1 (f_3(\pi_n)\tau)&(\frac{1-\chi(\gammaone)}{p^n}\eta_n(\gammatwo)
          (D_1\log F_2(\pi_n)D_2\log F_1(\pi_n)\\
         &-D_2\log F_1(\pi_n)D_1\log F_2(\pi_n))\otimes(\epsilon^{(n)})^2)\mod S_n
  \end{align*}    
  As before, we have
  \begin{align*}
   \gammaone(f_i(\pi_n)\tau)&=\chi^{-1}(\gammaone)f_i(\pi_n)\tau\mod S_n, \\
   \gammatwo(f_i(\pi_n)\tau)&= f_i(\pi_n)\tau\mod S_n,
  \end{align*}
  so (rearranging the terms) the above formula simplifies to 
  \begin{align*}
   H_{\alpha_1,\alpha_2,\alpha_3}=&\frac{1-\chi(\gammaone)}{p^n}\eta_n(\gammatwo)\pi^{-1} \times  \\
   -p^{-2} f_1(\pi_n)&(D_1\log F_2^\phi(\pi_n)D_2\log F_3^\phi(\pi_n)\\
   &+D_2\log F_2^\phi(\pi_n)D_1\log F_3^\phi(\pi_n)))\\
   +p^{-1} f_2(\pi_n)&(D_1\log F_1(\pi_n)D_2\log F_3^\phi(\pi_n)\\
   &+D_2\log F_1(\pi_n) D_1\log F_3^\phi(\pi_n))\\
   -f_3(\pi_n)&(D_1\log F_2(\pi_n)D_2\log F_1(\pi_n)\\
   &+D_2\log F_1(\pi_n)D_1\log F_2(\pi_n))\otimes(\epsilon^{(n)})^2\mod S_n.
  \end{align*}
  Recall that $D_1=(\pi_n+1)\frac{d}{d\pi_n}$ and $D_2=T_n\frac{d}{dT_n}$. It follows that the image in
  $\Omega(K)$ of the above expression (which we also denote by $H_{\alpha_1,\alpha_2\alpha_3}$)
  with respect to the map in Lemma~\ref{twistisom} is
  \begin{align*}
   \frac{1-\chi(\gammaone)}{p^n}\eta_n(\gammatwo)\pi^{-1} \times 
   &(-p^{-2} f_1(\pi_n)d\log F_2^\phi(\pi_n)\wedge d\log F_3^\phi(\pi_n)\\
   &+p^{-1} f_2(\pi_n)d\log F_1(\pi_n)\wedge d\log F_3^\phi(\pi_n)\\
   &-f_3(\pi_n)d\log F_2(\pi_n)\wedge d\log F_1(\pi_n))\mod S_n.
  \end{align*}
  Taking into account that $p^{-n}\log\chi(\gammaone)=p^{-n}(\chi(\gammaone)-1)\mod p^n$, we obtain that 
  \begin{align*}
   -c\TR(H_{\alpha_1,\alpha_2,\alpha_3})=& -\Tr_{F\slash\QQ_p}\Res_{\pi_n,T} 
   \pi^{-1} (p^{-2} f_1(\pi_n)d\log F_2^\phi(\pi_n)\wedge d\log F_3^\phi(\pi_n)\\
   &-p^{-1} f_2(\pi_n)d\log F_1(\pi_n)\wedge d\log F_3^\phi(\pi_n)\\
   &+f_3(\pi_n)d\log F_2(\pi_n)\wedge d\log F_1(\pi_n)),
  \end{align*}
  which finishes the proof.



\vspace{3ex}

 Saral Livia Zerbes
 
 Department of Mathematics
 
 Imperial College London
 
 London SW7 2AZ
 
 United Kingdom
 
 {\it email:} s.zerbes@imperial.ac.uk

\end{document}